\newtheorem{theorem}{Theorem}[section]
\newtheorem{lemma}[theorem]{Lemma}
\newtheorem{proposition}[theorem]{Proposition}
\newtheorem{corollary}[theorem]{Corollary}
\theoremstyle{definition}
\newtheorem{definition}[theorem]{Definition}
\numberwithin{equation}{section}
\newcommand{\R}{\mathbb{R}}
\newcommand{\dist}{{\rm dist}}
\newcommand{\eps}{\varepsilon}
\newcommand{\F}{{\mathcal{F}}}
\newcommand{\Ha}{{\mathcal{H}}}
\newcommand{\diver}{\operatorname{div}}
\newcommand{\trace}{\operatorname{Tr}}
\newcommand{\tang}{\operatorname{Tan}}
\renewcommand{\MR}[1]{\null}
\begin{document}

\title[Minimality via second variation for microphase]{Minimality via second variation for microphase separation of diblock copolymer melts}

\author{Vesa Julin}
\address{Department of Mathematics and Statistics, University of Jyv\"askyl\"a, Finland}
\email{vesa.julin@jyu.fi}

\author{Giovanni Pisante}
\address{Dipartimento di Matematica, Seconda Universita di Napoli, Caserta, Italy}	
\email{giovanni.pisante@unina2.it}

\keywords{Calculus of Variations, Second order minimality conditions}
\subjclass[2010]{49Q10, 82B24}
\date{\today}

\begin{abstract}
We consider a non-local isoperimetric problem arising as the sharp interface limit of the Ohta-Kawasaki free energy introduced to model microphase separation of diblock copolymers. We perform a second order variational analysis that allows us to provide a quantitative second order minimality condition.  We show that critical configurations with positive second variation are indeed strict local minimizers of the nonlocal perimeter. Moreover we provide, via a suitable quantitative inequality of isoperimetric type, an estimate of the deviation from minimality for configurations close to the minimum in the $L^1$ topology.  

\end{abstract}

\maketitle

\tableofcontents

\section{Introduction}

In this note we are interested in performing a second order analysis for a  non-local isoperimetric problem arising as a variational limit of the Ohta-Kawasaki functional introduced for a density functional theory for microphase separation of A/B diblock copolymers. 

Among the several mean field approximation theories proposed to model the phase separation in diblock copolymer melts, the one derived by Ohta and Kawasaki in \cite{OK} turns out to be one of the most promising from the mathematical point of view. Let $\Omega \subset \R^n$ be a bounded domain representing the volume occupied by the polymeric material. The free energy can be written as a non-local functional of Cahn-Hilliard type as
\begin{equation} \label{nonsharp}
\mathcal{E}_{\eps}(u)= \eps \int_{\Omega} |\nabla u|^2\, dx + \frac{1}{\eps} \int_{\Omega} (u^2-1)^2\, dx+ \gamma_0 \int_{\Omega} \int_{\Omega} G(x,y)( u(x)- m)( u(y)- m)\, dxdy,
\end{equation}
where $u\in H^1(\Omega)$ represents the density distribution of the monomers forming the copolymers, $G$ is the Green's function of the Laplace operator with Neumann boundary conditions and 
\[
m:=\fint_{\Omega} u \,dx
\] 
is the difference of the phases' volume fractions. The formulation \eqref{nonsharp} was first introduced in \cite{NO} and for a derivation of the Ohta-Kawasaki density function theory from the self-consistent mean field theory we refer the reader to \cite{CR} and the references therein. 

As pointed out in \cite{CR}, depending on the molecular structure of the polymers there are several regimes of phase mixture.  Nevertheless the presence of an observable phase separation occurs in the so called intermediate or strong segregation regimes, wherein the domain size is much larger than the interfacial length. This suggests that most of the features of the model can be described, from a mathematical point of view, by looking at the sharp interface limit  of  $\mathcal{E}_\eps$ as the thickness $\eps$ of the diffuse interface tends to zero. Thus we are lead to study the minimizers of the following energy functional, that arises as the $\Gamma$-limit of $\mathcal{E}_\eps$ in the $L^1$ topology,
\begin{equation} \label{sharp}
\mathcal{E}(u)= \frac{8}{3} |D u|(\Omega) +  \gamma_0 \int_{\Omega} \int_{\Omega} G(x,y)( u(x)- m)( u(y)- m)\, dxdy
\end{equation}
where $u : \Omega \to \{ -1, 1\}$ is a function of bounded variation and $|D u|(\Omega)$ denotes its total variation in $\Omega$. The functional \eqref{sharp} can be described in a more geometric way that turns out to be suitable for our analysis. Indeed, identifying the function $u$ with the set $E = \{ x \in \Omega \mid u(x)= 1\}$ and using the properties of $G(x,y)$,  we can rewrite $\mathcal{E}(u)$, up to a multiplicative constant, as
\begin{equation}
\label{functional}
J(E)= P(E, \Omega) + \gamma \int_{\Omega} |\nabla v_E|^2\, dx,
\end{equation}
where $P(E, \Omega)$ stands for the perimeter of $E$ in $\Omega$ and $v_E$ is the solution of 
\begin{equation}
\label{Laplace}
\begin{cases}
- \Delta v_E = u_E - m &\text{ in } \Omega, \\
\frac{\partial v_E}{\partial \nu} = 0 &\text{ on } \partial \Omega, \\
\int_{\Omega}v_{E}=0 \;,&
\end{cases}
\end{equation}
with the positions  
\[ 
u_E = \chi_E - \chi_{\Omega\setminus E} \;\;\;\text{ and } \;\;\;\; m = \fint_{\Omega} u_E \, dx.
\]  
We are thus lead to study the variational problem
\begin{equation}\label{problem}
\min \left\{  J(E) \mid E \subset \Omega , \, \fint_{\Omega} u_E \, dx = m \right\}.
\end{equation}
Let us note that the condition $ \fint_{\Omega} u_E \, dx = m$ in the problem \eqref{problem} is nothing but a volume constraint imposed on the admissible sets, indeed, by definition of $m$, it is equivalent to $|E| = \frac{1}{2}(m + 1)|\Omega|$.
 

The problem \eqref{problem} has been presented in \cite{ChSt2} as a mathematical paradigm for the phenomenon of energy-driven pattern formation associated with competing short and  long-range interactions. Recently an increasing interest has been devoted to the second order variational analysis of energy functionals which exhibit this competing behaviour. Indeed it has been successfully applied to prove stability and minimality criteria in several contexts (see for instance \cite{AFM,CJP}). Directly related to our problem is the work by Ren and Wei and by Choksi and Sternberg  (cfr. \cite{RW1,RW2,RW3,RW4,RW5,ChSt2}). In a series of papers, as a first step toward the validation of the conjectured periodicity of global minimizers of \eqref{functional},  they calculate the second variation of \eqref{functional} at a critical configuration and, among several other applications, they construct examples of periodical critical configurations and find conditions under which their second variation is positive definite. 

A different approach is taken in \cite{AFM}, where the authors consider the problem of minimizing the functional \eqref{functional} in the periodic case, i.e., when $\Omega$ is  the $n$-dimensional flat torus. They prove the local minimality of regular critical configurations whose second variation is  positive definite. In this direction goes also our present work, whose aim is to prove that a similar minimality criterion holds true  for the problem \eqref{problem}. 

In order to describe our main result, we recall that, if $E$ is a sufficiently regular critical set for $J$, then the following Euler-Lagrange equation holds:
\begin{equation} \label{euler_lag}
H_M + 4 \gamma v_E = \lambda  \quad \text{on } M,
\end{equation}
where $M:= \overline{\partial E \cap \Omega}$ is the relative boundary of $E$ in $\Omega$, $H_M(x)$ indicates the sum of the principal curvatures of $M$ and the constant $\lambda$ is a Lagrange multiplier associated to the volume constraint. Moreover a regular critical set $E$ turns out to meet $\partial \Omega$ orthogonally in the sense that 
\begin{equation} \label{orthogonality}
\langle \nu_M, \nu_{\Omega} \rangle = 0 \qquad  \text{on } \, M \cap \partial \Omega, 
\end{equation}
where $\nu_M$ is the outward unit normal to $M$ and $\nu_{\Omega}$ is the exterior normal to $\Omega$. Following \cite{ChSt2} and \cite{StZ} (cf. in particular \cite[Remark 2.8]{ChSt2}), the second variation of the functional \eqref{functional} at a regular critical set $E$ (cf. Definition \ref{regular_crit}), can be associated with the quadratic form defined, over all functions  $\varphi \in H^1(M)$ satisfying the integral condition 
\[\int_{M}\varphi \, d \Ha^{n-1}= 0,\]
as 
 \begin{equation}
\label{quadratic}
\begin{split}
\partial^2 J(E)[\varphi] = & \int_{M} \left( |D_{\tau}\varphi |^2- | B_M |^2  \varphi^2 \right)\, d\Ha^{n-1}  -  \int_{M \cap \partial \Omega}  B_{\partial \Omega}(\nu_M,\nu_M) \varphi ^2  \,d \Ha^{n-2}\\
&+ 8 \gamma \int_{M}  \int_{M}  G(x,y) \varphi(x)  \varphi(y) \, d\Ha^{n-1}(x)d\Ha^{n-1}(y)\\
&+  4 \gamma \int_{M}  \langle \nabla v_{E}, \nu_M \rangle \varphi^2 \, d\Ha^{n-1},
\end{split}
\end{equation}
where $ B_{\partial \Omega}$ stands for the second fundamental form of $\partial \Omega$, i.e., the Hessian of the distance function from  the boundary, $ | B_M |^2$ is the sum of the squares of the principal curvatures of $M$ and $G : \Omega \times \Omega \to \R$ is the Green's function defined by the following Neumann boundary value problem
\[
\begin{cases}
- \Delta_y G(x, y) = \delta_x - \frac{1}{|\Omega|} & \text{ in } \Omega, \\
D_{x}G(x,y)\cdot \nu_{\Omega}(x) =0  & \text{for any } x \text{ on } \partial \Omega, \\
\int_{\Omega} G(x,y) \, dx = 0. & 
\end{cases}
\]
It follows from the results in  \cite{ChSt2} and \cite{StZ} that every regular local minimizer $E$ of \eqref{functional} satisfies 
\[
\partial^2 J(E)[\varphi] \geq 0 \qquad \text{for every }\, \varphi \in H^1(M) \,\, \text{with }\,  \int_{M}\varphi \, d \Ha^{n-1}= 0.
\]  
As we already mention, our main result deals with the opposite question. In other words, we study whether we may deduce the stability of a critical set $E$ from the positivity of the second variation. Our goal is  to prove that a regular critical point with positive second variation is indeed an isolated local minimum. This is achieved in the following theorem.
\begin{theorem} \label{mainthm}
Suppose $\Omega$ is $C^{4,\alpha}$-regular, for some $0<\alpha\leq1$, and that $E \subset \Omega$ with $ \int_{\Omega} u_{E} \, dx =m$ is a regular critical set of \eqref{functional} which satisfies 
\[
\partial^2 J(E)[\varphi] >0
\] 
 for every $\varphi \in H^1(M)$ with $\int_{M}\varphi \, d \Ha^{n-1}= 0$. Then $E$ is a strict local minimum for $J$ and there exist constants $c>0$ and $\delta>0 $ such that
\[
J(F) \geq  J(E) + c\,  |F \triangle E |^2 
\]
for every set  $F$ of finite perimeter in $\Omega$ with $ \int_{\Omega} u_{F} \, dx =m$ and $|F \triangle E | \leq \delta$.
\end{theorem}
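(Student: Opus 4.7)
The plan is to adapt the scheme developed by Cicalese--Leonardi and Acerbi--Fusco--Morini for deducing local minimality from positivity of the second variation, adjusted here to the bounded-domain setting with Neumann boundary data and the orthogonal contact condition \eqref{orthogonality}. The overall argument is by contradiction: if the quadratic bound fails, then one extracts a sequence of competitors $F_k$ with $|F_k \triangle E| \to 0$ and $J(F_k) - J(E) < |F_k \triangle E|^2 / k$, replaces them via a penalized minimization by ``nice'' quasi-minimizers that are $C^{3,\alpha}$-normal graphs over $M$, and finally contradicts the hypothesis by a Taylor expansion of $J$ along an interpolating flow.

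First I would upgrade the strict positivity of $\partial^2 J(E)$ to coercivity on the constraint subspace: there exists $c_0 > 0$ with
\[
\partial^2 J(E)[\varphi] \,\geq\, c_0 \, \norm{\varphi}_{H^1(M)}^2 \qquad \text{for every } \varphi \in H^1(M) \text{ with } \int_M \varphi \, d\Ha^{n-1} = 0.
\]
This follows from a standard compactness argument, since the nonlocal integral term, the potential terms and the boundary term in \eqref{quadratic} define quadratic forms that are compact perturbations of the Dirichlet form $\int_M |D_\tau \varphi|^2$. Next I would establish continuity of the second variation under $C^{1,\alpha}$-small perturbations of $E$: if $\partial F$ is a small normal graph over $M$, then the analogous quadratic form at $F$ satisfies $\partial^2 J(F)[\psi] \geq (c_0/2)\norm{\psi}_{H^1(\partial F)}^2$ on zero-mean test functions, by continuous dependence of the curvatures, of the potential $v_F$, and of the Green's-function kernel on the underlying set.

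The core of the argument is the selection step. Given the putative bad sequence $F_k$ with $\eps_k := |F_k \triangle E| \to 0$, consider the penalized problem
\[
\min\left\{ J(F) + \Lambda \bigl| |F \triangle E| - \eps_k \bigr| \; : \; F \subset \Omega, \ \int_\Omega u_F \, dx = m \right\}
\]
with $\Lambda$ chosen large enough that minimizers cannot escape a small $L^1$-neighbourhood of $E$. Existence of minimizers $G_k$ follows by the direct method, and since $J(G_k) \leq J(F_k)$ they still violate the quadratic bound. The crucial point is that $G_k$ is an $\omega$-minimizer of the perimeter modulo the smooth nonlocal perturbation; density estimates and De Giorgi regularity therefore yield $C^{1,\alpha}$ regularity in the interior, while the $C^{4,\alpha}$-regularity of $\partial\Omega$ gives $C^{1,\alpha}$ regularity up to the contact set via boundary regularity for almost-minimal sets. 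Hausdorff convergence $\partial G_k \to M$ follows, and bootstrapping the Euler--Lagrange equation \eqref{euler_lag} through Schauder theory, with \eqref{orthogonality} playing the role of a Neumann-type boundary condition on $M\cap\partial\Omega$, upgrades this to $C^{3,\alpha}$ convergence. Writing $\partial G_k = \{ x + \psi_k(x) \nu_M(x) : x \in M \}$, we obtain $\norm{\psi_k}_{C^{3,\alpha}(M)} \to 0$.

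Finally, parametrizing the interpolating flow $E_t = \{x + t \psi_k(x) \nu_M(x) : x \in M\}$, expanding $t \mapsto J(E_t)$ to second order, and using $\partial J(E)[\psi_k] = 0$ (which holds after subtracting a mean to enforce the volume constraint) yields
\[
J(G_k) - J(E) \,=\, \tfrac12 \, \partial^2 J(E)[\tilde \psi_k] + o\!\bigl(\norm{\psi_k}_{H^1(M)}^2\bigr),
\]
where $\tilde \psi_k$ is the zero-mean correction of $\psi_k$. Applying the coercivity from the first step and the comparison $|G_k \triangle E| \leq C \norm{\psi_k}_{L^1(M)} \leq C \norm{\psi_k}_{H^1(M)}$ gives $J(G_k) - J(E) \geq c \, |G_k \triangle E|^2$, contradicting the penalized choice of $G_k$. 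The main technical obstacle is the regularity of $G_k$ up to $\partial\Omega$: the contact set $M \cap \partial\Omega$ is a codimension-two free boundary, the orthogonality condition couples the interior Euler--Lagrange equation to the geometry of $\partial\Omega$, and one has to ensure both that $G_k$ meets $\partial\Omega$ almost orthogonally and that the graph parametrization extends smoothly across this corner. It is precisely here that the $C^{4,\alpha}$-regularity of $\Omega$ is exploited.
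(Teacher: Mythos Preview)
Your overall strategy matches the paper's (coercivity, penalized selection, regularity, second-order expansion), but there is a genuine gap at the final step, precisely where you locate the ``main technical obstacle''. The interpolating flow $E_t = \{x + t\psi_k(x)\nu_M(x) : x \in M\}$ is not admissible in the Neumann setting: at a point $x \in M \cap \partial\Omega$ the orthogonality condition gives $\nu_M(x) \in \tang_x(\partial\Omega)$, but the straight segment $t \mapsto x + t\psi_k(x)\nu_M(x)$ leaves $\overline\Omega$ as soon as $\partial\Omega$ is curved. Even after correcting for this, the intermediate surfaces $M_t$ will generically fail to meet $\partial\Omega$ orthogonally. This matters because the second variation formula (Proposition~\ref{2ndVariation}) acquires its clean quadratic structure \eqref{quadratic} only when the underlying set satisfies \eqref{orthogonality}; without it extra boundary terms appear that cannot be absorbed into $H^1$-coercivity. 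Your expansion $J(G_k)-J(E)=\tfrac12\partial^2J(E)[\tilde\psi_k]+o(\norm{\psi_k}_{H^1}^2)$ would instead require third-order control along a non-admissible flow, which you do not supply.

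The paper resolves this by an elaborate construction (Lemma~\ref{the_flow}): a vector field $X$ is built whose flow satisfies the tangential condition \eqref{tangent.cond}, preserves volume exactly, \emph{and} keeps every $E_t$ orthogonal to $\partial\Omega$, by bending the normal trajectories near $\partial\Omega$ through a projection-compatible field and then solving transport equations to enforce $\diver X=0$. One can then apply Proposition~\ref{2ndVariation} at each $E_t$ and write $J(F)-J(E)=\int_0^1(1-t)\frac{d^2}{dt^2}J(E_t)\,dt$, using continuity of $\partial^2J$ in $W^{2,p}$ (not $C^{1,\alpha}$: the form contains $|B_M|^2$) to conclude. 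Two smaller issues: your penalty $\Lambda\bigl||F\triangle E|-\eps_k\bigr|$ is not differentiable, which obstructs the Euler--Lagrange equation for $G_k$ (the paper uses the smoothed $\sqrt{(|F\triangle E|-\eps_k)^2+\eps_k}$); and bootstrapping the penalized minimizers to $C^{3,\alpha}$ is too optimistic, since the penalty contributes only an $L^\infty$ term to the curvature equation, yielding $W^{2,p}$ convergence at best.
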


In particular, this implies that $E$ is a local minimizer of $J$ and, in addition, provides a quantitative estimate of the deviation from the  minimality for sets near to $E$. We remark that our result holds also in the case $\gamma=0$ and thus  we cover the case of volume constrained isoperimetric problem in a regular domain $\Omega$. However, in this case it is rather restrictive to assume that the second variation is positive definite, and not just positive semi-definite.  

Going back to the functionals $\mathcal{E}_{\eps}$ defined in \eqref{nonsharp}, as we already observed, it is well known that, for $\eps\to 0$, they $\Gamma$-converge in the $L^1$ topology to $\mathcal{E}$.  As a corollary of the previous theorem we can thus obtain the following stability result in the spirit of \cite[Theorem 1.3]{AFM}.
\begin{corollary} \label{corollary1}
Assume $E$ is a regular critical point of $J$  with positive second variation and denote $u_E = \chi_E - \chi_{\Omega \setminus E}$. There exist $\eps_0>0$ and a family $\{ u_{\eps}\}_{\eps < \eps_0}$ of isolated local minimizers of $\mathcal{E}_{\eps}$  satisfying the constraint $\int_{\Omega} u_{\eps} \, dx = m$ such that $u_{\eps} \to u_E$ in $L^1(\Omega)$ as $\eps \to 0$.
\end{corollary}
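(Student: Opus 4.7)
The plan is to combine the strict quantitative local minimality of $u_E$ given by Theorem \ref{mainthm} with the $L^1$ $\Gamma$-convergence of $\mathcal{E}_\eps$ to $\mathcal{E}$, via a Kohn--Sternberg type penalization in the spirit of \cite[Theorem 1.3]{AFM}. Since $J$ and $\mathcal{E}$ coincide up to a positive multiplicative constant on $\{-1,1\}$-valued functions, Theorem \ref{mainthm} furnishes $c_0, \delta_0>0$ such that
\[
\mathcal{E}(u) \geq \mathcal{E}(u_E) + c_0 \,\|u-u_E\|_{L^1(\Omega)}^2
\]
for every $u: \Omega \to \{-1,1\}$ of bounded variation with $\fint_\Omega u\, dx = m$ and $\|u-u_E\|_{L^1} \leq \delta_0$.

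For each $\eps>0$ I would define $u_\eps$ as a minimizer of the penalized problem
\[
\min \Bigl\{ \mathcal{E}_\eps(u) : u \in H^1(\Omega),\ \fint_\Omega u\, dx = m,\ \|u-u_E\|_{L^1(\Omega)} \leq \delta_0 \Bigr\},
\]
whose existence follows from the direct method: the admissible class is non-empty (take, for instance, a mollified truncation of $u_E$ with mean readjusted by a vanishing additive constant) and $\mathcal{E}_\eps$ is coercive and weakly lower semicontinuous on $H^1(\Omega)$. To identify the $L^1$-limit of $u_\eps$, I would feed a standard recovery sequence for $u_E$, shifted by an infinitesimal constant to restore $\fint u = m$ (the shift costing $o(1)$ in energy), into $\mathcal{E}_\eps$ to obtain $\limsup_\eps \mathcal{E}_\eps(u_\eps) \leq \mathcal{E}(u_E)$. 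The equi-boundedness of the energy then gives $L^1$-precompactness, and every cluster point $u^*$ takes values in $\{-1,1\}$, satisfies the volume constraint, and lies within $\delta_0$ of $u_E$ in $L^1$; the $\Gamma$-liminf inequality together with the displayed quantitative bound forces $u^* = u_E$, so convergence holds along the full family.

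Since $\|u_\eps - u_E\|_{L^1} \to 0$, the penalization constraint becomes inactive for small $\eps$, and each $u_\eps$ is a genuine $L^1$-local minimizer of $\mathcal{E}_\eps$ under the volume constraint alone. For the isolation claim, I would rerun the same compactness and $\Gamma$-liminf chain against any second local minimizer $v_\eps$ with $\|v_\eps - u_E\|_{L^1} \leq \delta_0/2$: such $v_\eps$ must converge in $L^1$ to $u_E$ as well, and a matched upper and lower $\Gamma$-bound argument near $u_E$, joint with the quadratic gap of Theorem \ref{mainthm}, should then force $v_\eps = u_\eps$ for $\eps$ small. The principal obstacle I anticipate is precisely this last step: transferring the sharp quantitative $L^1$-estimate from the limit functional $\mathcal{E}$ to an isolation statement at the diffuse level $\mathcal{E}_\eps$ requires careful pairing of recovery-sequence upper bounds against the $\Gamma$-liminf around the approximating minimizers, together with tight control on the volume-correction perturbations; the remaining ingredients are standard $\Gamma$-convergence machinery.
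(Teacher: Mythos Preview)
Your proposal is correct and follows exactly the route the paper takes, which simply refers to \cite[Theorem~6.3]{AFM}: minimize $\mathcal{E}_\eps$ over an $L^1$-ball around $u_E$, use $\Gamma$-convergence together with the strict quantitative local minimality from Theorem~\ref{mainthm} to force $u_\eps\to u_E$, and then observe that the constraint becomes inactive. Your identification of the isolation step as the delicate point is accurate, and it is handled in \cite{AFM} essentially along the contradiction-plus-quantitative-gap line you outline.
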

 
Some comments are in order on the differences between the periodic case (studied in \cite{AFM}) and the Neumann boundary case. Indeed, if on one side working with the Neumann setting dispenses us from several technicalities needed to deal with the translation invariance of the functional $J$, on the other side new delicate arguments need to be introduced to deal with problems which arise when $E$ touches $\partial \Omega$. We remark that in \cite{AFM} the Neumann boundary case was considered only under a  restrictive assumption that the set $E$ does not intersect the boundary of $\Omega$.

Finally we outline the structure of the paper. In the next section we introduce the notation and recall some preliminary results. In Section \ref{Sec-regularity} we discuss some regularity results for $\Lambda$-minimizers of the area functional and in particular a stability result for the regularity (cfr. Theorem \ref{convergence}) which will play an important role in the proof of the main theorem. Section 4 is devoted to  lengthy calculations of the second variation formula for regular sets which  satisfy the orthogonality condition \eqref{orthogonality} but are not critical. The main difficulty in the calculations are due the perimeter term in \eqref{functional}. As in \cite{CMM}, where the general second variation formula of the perimeter in $\R^n$ is derived,  we have to find the right formula for the second variation of the perimeter,  but in our case we have to include the terms induced by the boundary $\partial \Omega$.  The proof of the main result, Theorem \ref{mainthm}, is presented in sections 5 and 6. The scheme follows a well established path (see for instance \cite{AFM,CJP}). First we use the general second variation formula from Proposition \ref{2ndVariation} to prove the local minimality among regular sets which are close to the critical one in the $W^{2,p}$-topology and satisfy the orthogonality condition \eqref{orthogonality} (cf, Proposition \ref{smooth} for a precise statement).  The proof of the  $W^{2,p}$-minimality differs from the periodic case, since we always have to preserve the orthogonality condition (see Lemma \ref{the_flow}). The final result is then proved by contradiction using a penalization argument and exploiting the regularity theory of $\Lambda$-minimizers of the area functional.

\section{Preliminaries}

In this section we set up the basic notation and recall some preliminary results. 
Throughout the paper $\Omega \subset \R^n$ is assumed to be a bounded domain with boundary of class $C^{4, \alpha}$ for some $\alpha \in (0,1]$, unless otherwise mentioned. We say that a set $E \subset \Omega$ is \emph{$C^{k, \alpha}$-regular}, with $k\in \mathbb{N}$ and $\alpha\in (0,1]$, if its \emph{relative boundary}, defined by 
\begin{equation*} 
M:= \overline{\partial E \cap \Omega}
\end{equation*}
is a $C^{k,\alpha}$-regular $(n-1)$-dimensional manifold with or without boundary if  $\partial E \cap \partial \Omega \not = \emptyset $ or $\partial E \cap \partial \Omega = \emptyset $  respectively. 

For any sufficiently regular set  $E \subset \Omega$ such that $\partial M:= M \cap \partial \Omega \not = \emptyset$, we denote, for $x\in \partial M$,  by $\nu^*(x)$ the \emph{outward unit co-normal}  of $\partial M$ at $x$, i.e., the unit vector which is normal to $M \cap \partial \Omega$ and tangent to  $M$ at the point $x$. Given an oriented regular manifold $\mathcal{M}$ and $x\in \mathcal{M}$ we denote by $\nu_\mathcal{M}(x)$ the outward unit normal to $\mathcal{M}$ at the point $x$ and we use the notation $H_\mathcal{M}(x)$ to indicate the sum of the principal curvatures of $\mathcal{M}$ at $x$, i.e., $H_\mathcal{M}(x)= \diver_\tau \nu_\mathcal{M}$, where $\diver_\tau$ denotes the tangential divergence on $\mathcal{M}$ (see \cite[Section 7.3]{AFP}). We shall omit the dependence on the point $x$ whenever it is clear from the context.

Let $E$ be a $C^1$-regular set and let  $X$  be a $C^1$-vector field defined in $\overline{\Omega}$, which satisfies 
\begin{equation}
\label{tangent.cond}
X(x) \in \tang_x(\partial \Omega)\quad \text{for every } x \in \partial \Omega,
\end{equation}
where $\tang_x(\partial \Omega)$  denotes the tangent space of $\partial\Omega$ at the point $x$. We assume further  that
\begin{equation}
\label{volume1}
\int_{E} \diver (X) \, dx = \int_{M } \langle X, \nu_M \rangle \, d \Ha^{n-1} =0.
\end{equation}
We say that a vector field $X$ is \emph{admissible} if it satisfies \eqref{tangent.cond} and \eqref{volume1}. Using the flow associated to $X$, i.e., the local solution $\Phi : \overline{\Omega} \times (-t_0, t_0) \to \overline{\Omega}$ of the following Cauchy problem    
\begin{equation}\label{flow}
\frac{\partial}{\partial t} \Phi(x,t) = X(\Phi(x,t)), \qquad \Phi(x, 0)= x,
\end{equation}
we define the perturbations of $E$, induced by $X$ and parametrized by $t$, by 
\[
E_t= \Phi(E,t).
\] 
The condition \eqref{tangent.cond} ensures that the flow is well defined and does not leave $\Omega$, while  \eqref{volume1} ensures that $|E_t| = |E| + o(t)$. We then define the first variation of the functional \eqref{functional} at $E$ in the direction of the field $X$ (or, equivalently, with respect to the flow $\Phi$) by 
\[
\frac{d}{d t} J(E_t) \big|_{t=0}.
\]
The well known formula for the first variation (see \cite{ChSt2} and \cite{StZ}) leads us to the following definition of regular critical sets of \eqref{functional}. 


\begin{definition}
\label{regular_crit}
We say that  a $C^1$-regular set $E \subset \Omega$ is \emph{a regular critical set} for $J$ if for every $X \in C^{1}(\overline\Omega;\R^{n})$, which satisfies  \eqref{tangent.cond} and  \eqref{volume1}, it holds  
\begin{equation} \label{euler_lag-weak}
\int_{M}  \diver_\tau X \, d \Ha^{n-1}  + 4\gamma \int_{M} v_{E} \langle X , \nu_{M} \rangle d\Ha^{n-1} = 0.
\end{equation}
In other words, $E$ is a set for which the first variation of $J$ is zero in the direction of every admissible vector field $X$ of class $C^1$.
\end{definition}


Since any regular critical set, $E$, by definition, satisfies the equation \eqref{euler_lag} in the weak sense, we may use this information to infer higher regularity for $E$.  According to \cite[Remark 2.3]{AFM} and \cite[Section 1.3]{CS} it has been observed that a regular critical set is indeed of class $C^{3,\alpha}$ in $\Omega$ for any $\alpha \in (0,1)$. In the next proposition, we show that this regularity can be significantly improved using standard regularity theory for elliptic equations.
   
\begin{proposition}
\label{int_regularity}
Let $\Omega\subset \R^n$ be a domain with boundary of class $C^{k, \alpha}$ and $E \subset \Omega$ a regular critical set. Then  $E$ is $C^{k, \alpha}$-regular  and  $C^{\infty}$-regular in $\Omega' $ for every $\Omega'  \subset \subset \Omega$.  
\end{proposition}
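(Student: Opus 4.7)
The plan is to set up a bootstrap using the Euler--Lagrange equation, starting from the $C^{3,\alpha}$-regularity of $E$ mentioned just before the statement. Locally, parametrize $M$ as the graph $x_n = f(x')$; the relation $H_M + 4\gamma v_E = \lambda$ from \eqref{euler_lag} becomes the quasilinear elliptic PDE
\[
-\diver\!\left(\frac{\nabla f}{\sqrt{1+|\nabla f|^2}}\right) = \lambda - 4\gamma\, v_E(x', f(x')),
\]
to which Schauder theory applies once we control the regularity of the right-hand side. At a point $x_0 \in M \cap \partial\Omega$, the orthogonality condition \eqref{orthogonality} supplies a first-order Neumann-type boundary condition for $f$, so the same strategy yields an oblique derivative problem for $f$ on the flattened portion of $\partial\Omega$.

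The key auxiliary step is to estimate the regularity of $v_E|_M$ in terms of that of $M$ and $\partial\Omega$. Since $u_E \in L^\infty(\Omega)$, Calder\'on--Zygmund estimates give $v_E \in W^{2,p}(\Omega)$ for every $p < \infty$ and in particular $v_E \in C^{1,\beta}(\overline\Omega)$ for every $\beta \in (0,1)$. Interpreting \eqref{Laplace} as a two-phase transmission problem across $M$, with constant right-hand sides $1-m$ and $-1-m$ on the two sides and matched traces of $v_E$ and $\nabla v_E$, standard transmission regularity combined with the Neumann condition on $\partial\Omega$ yields the following: if $M \in C^{j,\alpha}$ and $\partial\Omega \in C^{j,\alpha}$, then both one-sided restrictions of $v_E$ are $C^{j,\alpha}$ up to $M$, so the trace $v_E|_M$ is $C^{j,\alpha}$.

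With this in hand, the bootstrap proceeds one step at a time. Assuming $M \in C^{j,\alpha}$ for some $3 \le j < k$, the transmission step gives $v_E|_M \in C^{j,\alpha}$, hence the right-hand side of the graph equation is $C^{j,\alpha}$. Schauder estimates for quasilinear elliptic equations, in their interior form at points of $M \setminus \partial\Omega$ and in their oblique derivative form near $M \cap \partial\Omega$, then improve $f$ to $C^{j+1,\alpha}$; iterating until $j = k$ gives $M \in C^{k,\alpha}$, which is exactly the claimed regularity. For the $C^\infty$ claim on $\Omega' \subset\subset \Omega$, the same argument runs entirely in the interior, where the bootstrap is never constrained by the $C^{k,\alpha}$ regularity of $\partial\Omega$ and so can be iterated indefinitely.

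The main technical obstacle is the careful treatment of the contact set $M \cap \partial\Omega$. One must choose $C^{k,\alpha}$ local coordinates that simultaneously flatten $\partial\Omega$ (so that the Neumann problem for $v_E$ and the boundary condition for $f$ take standard forms) and preserve the graph representation of $M$. In these coordinates the orthogonality condition becomes a non-tangential (oblique) boundary condition for $f$, while \eqref{Laplace} becomes a transmission problem in a half-space with a Neumann condition on its flat boundary; verifying ellipticity of the oblique problem and the compatibility of the two transmissions along the codimension-two corner $M \cap \partial\Omega$ is the delicate point. Once this local framework is in place, the bootstrap itself is routine.
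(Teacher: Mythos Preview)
Your approach is correct and follows the same bootstrap scheme as the paper: use the regularity of $M$ to improve the regularity of $v_E|_M$, then feed this into the Euler--Lagrange equation via Schauder theory to gain one derivative on $M$, and iterate. The only difference is in how the first step is packaged. You invoke transmission regularity as a black box to conclude that $M \in C^{j,\alpha}$ forces $v_E|_M \in C^{j,\alpha}$; the paper instead proves this directly by flattening $M$ and differentiating the transformed equation for $v_E$ in the tangential directions $y_1,\dots,y_{n-1}$, where the piecewise-constant right-hand side $u_E - m$ disappears, so that the tangential derivatives of $v_E$ gain a full $C^{1,\alpha}$ and hence the trace $v_E|_M$ improves. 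This tangential-differentiation trick is exactly the mechanism underlying the transmission estimate you cite, so the two arguments are the same in content. The paper treats only the interior case in detail and dismisses the boundary case with a one-line ``similar argument''; your outline of the oblique-derivative formulation at $M \cap \partial\Omega$, with simultaneous flattening of $\partial\Omega$ and the Neumann condition for $v_E$, is more explicit about what that similar argument actually requires.
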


\begin{proof}
We will prove in detail only the interior regularity since the regularity up to the boundary follows using a similar argument.  As we already observed, $\partial E$ is in fact of class $C^{3, \alpha}$ in $\Omega$ for every $\alpha \in (0,1)$. This follows from  \eqref{euler_lag} using standard elliptic estimates, after noticing that $v_E$, being a solution of \eqref{Laplace}, is of class $C^{1,\alpha}(\Omega)$.

Let $x_0$ be a point on $\partial E \cap \Omega$.  Then there exist a neighbourhood $U$ of $x_0$ and a $C^{3, \alpha}$ diffeomorphism $\Phi$ from $\bar{U}$ to the unit ball $\bar{B}_1$ such that $\Phi(U \cap E) = B_1 \cap \{ y_n <0 \}$, $\Phi(U \cap \partial E) = B_1 \cap \{ y_n = 0\}$ and $\det D\Phi^{-1}=1$. Define $w(y) = v_E(\Phi^{-1}(y))$ and $h(y)= u_E(\Phi^{-1}(y))-m$. By \eqref{Laplace} $w$ solves an equation of the form
\[
- \diver(A(y)Dw) = h.
\]
in $B_1$ with $A=\{A_{ij}\}_{i,j=1,\dots,n }$ of class $C^{2,\alpha}$.
Since $h(y)$ is constant in every direction, except in the normal direction $y_n$, we may differentiate the above equation with respect to $y_i$ for every $i = 1, 2, ..., n-1$ and obtain
\[
- \diver(A(y)Dw_{y_i}) =  \diver(A_{y_i}(y)Dw).
\]
Since $Dw \in C^{\alpha}(B_1)$ and $A_{ii} \in C^{2, \alpha}(B_1)$, we obtain from the standard elliptic regularity theory (cf. \cite{GT}, chapter 8) that   $w_{y_i} \in C^{1, \alpha}(B_1)$. This implies that the tangential derivative $D_{\tau} v_E$ is locally $C^{2, \alpha}$ regular on $\partial E \cap \Omega$ and the Euler equation \eqref{euler_lag} in turn implies that   $\partial E \cap \Omega$ is locally of class $C^{4, \alpha}$. By iteration we then obtain the interior $C^{\infty}$ regularity. 
\end{proof}

At the end of the section we remark that  Definition \ref{regular_crit} implies the orthogonality condition \eqref{orthogonality}, i.e., that the relative boundary $M$ of a regular critical set meets $\partial \Omega$ orthogonally. Indeed, since by Proposition  \ref{int_regularity}, $M$ is $C^2$-regular, using the divergence theorem on $M$ we have, for any vector Field $X$, that
\begin{equation}\label{div-theorem}
\int_{M} \diver_\tau X \, d \Ha^{n-1} = \int_M H_{M} \langle X, \nu_M \rangle \, d \Ha^{n-1}  +  \int_{M \cap \partial \Omega }\langle X, \nu^* \rangle \, d \Ha^{n-2}.
\end{equation} 
Moreover $M$ is classical solution of  \eqref{euler_lag}, thus by Definition \ref{regular_crit} and \eqref{div-theorem}, we can write, for any $X$ which satisfies  \eqref{tangent.cond} and  \eqref{volume1},
\begin{equation*}
\begin{split}
\int_{M \cap \partial \Omega }\langle X, \nu^* \rangle \, d \Ha^{n-2} = &  \int_M (H_M + 4\gamma v_E)\langle X, \nu_M \rangle \, d \Ha^{n-1}  +  \int_{M \cap \partial \Omega }\langle X, \nu^* \rangle \, d \Ha^{n-2} \\
= & \int_{M}  \diver_\tau X \, d \Ha^{n-1}  + 4\gamma \int_{M} v_{E} \langle X , \nu_{M} \rangle d\Ha^{n-1}  = 0
\end{split}
\end{equation*}
and the orthogonality condition follows (cf.  \cite[Proof of Theorem 2.2]{StZ}).

\section{Regularity of $\Lambda$-minimizers} \label{Sec-regularity}

In this section we briefly discuss the regularity of sets which we call $\Lambda$-minimizers of the perimeter. The regularity result will later play an important role in the proof of Theorem \ref{mainthm}, but  we will also use it to deduce partial $C^1$ regularity for  local minimizers of \eqref{functional}. Together with Proposition \ref{int_regularity} this will then imply partial $C^{\infty}$ regularity  for the local minimizers.

\begin{definition}
\label{almost_area}
A set $E$ of finite perimeter in $\Omega$, i.e. such that $\chi_E  \in BV(\Omega)$, is a \emph{$\Lambda$-minimizer} of the perimeter  if for every $G \subset \Omega$ it holds
\[
P(E, \Omega) \leq P(G, \Omega) + \Lambda | G \triangle E |.
\]
\end{definition}

The above definition appears e.g. in \cite{CL} where it is named \emph{strong $\Lambda$-minimality}. Very similar to this are the definitions of 'almost minimizer' or 'quasiminimizer' of the perimeter, used for example in \cite{Gr} and \cite{Tam}, where  the perturbation is assumed to take place in a small ball, i.e. $G \triangle E \subset B_r(x_0)$, and the measure of the symmetric difference $| G \triangle E |$ is replaced by the volume of that ball, $|B_{r}|$. The advantage of $\Lambda$-minimality is that it implies the existence of a uniformly bounded generalized mean curvature vector. This allows us to use the regularity result by Gruter and Jost \cite[Theorem 4.9]{GJ} to prove the regularity of $\Lambda$-minimizers. 

\begin{theorem} \label{gruter} 
Suppose that $E$ is a  $\Lambda$-minimizer in $\Omega$. Then for every $\alpha \in (0,1)$ the relative boundary $M:= \overline{\partial E \cap \Omega}$ of $E$ is $C^{1, \alpha}$-regular outside a singular set $\Gamma$ with Hausdorff dimension $\dim_{\Ha}(\Gamma) \leq n- 8$. Moreover $M$ meets $\partial \Omega$ orthogonally on $\partial \Omega \setminus \Gamma$. 
\end{theorem}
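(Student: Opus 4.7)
The plan is to reduce Theorem \ref{gruter} to the free-boundary regularity theorem of Grüter and Jost \cite[Theorem 4.9]{GJ}, which applies to rectifiable varifolds with uniformly bounded generalized mean curvature and free boundary on a $C^2$-hypersurface. Since the excerpt has already flagged this reduction, the substantive task is to translate the $\Lambda$-minimality of $E$ into the varifold hypotheses of \cite{GJ}.

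First I would derive the first-variation bound. Let $X\in C^1_c(\R^n;\R^n)$ be a vector field whose restriction to $\overline\Omega$ satisfies \eqref{tangent.cond}, so that its flow $\Phi_t$ preserves $\Omega$ and $E_t:=\Phi_t(E)\subset\Omega$ is an admissible competitor in Definition \ref{almost_area}. The classical asymptotic formulas
\begin{equation*}
\frac{d}{dt}P(E_t,\Omega)\Big|_{t=0}=\int_{\rbdry E}\diver_\tau X\,d\Ha^{n-1},\qquad |E_t\triangle E|= |t|\int_{\rbdry E}|\langle X,\nu_E\rangle|\,d\Ha^{n-1}+o(t),
\end{equation*}
inserted into the $\Lambda$-minimality inequality $P(E,\Omega)\le P(E_t,\Omega)+\Lambda|E_t\triangle E|$ and evaluated at $\pm t\to 0$, yield
\begin{equation*}
\left|\int_{\rbdry E}\diver_\tau X\,d\Ha^{n-1}\right|\le \Lambda\int_{\rbdry E}|\langle X,\nu_E\rangle|\,d\Ha^{n-1}\le \Lambda\|X\|_\infty P(E,\Omega).
\end{equation*}
This says exactly that the multiplicity-one rectifiable varifold associated to $\rbdry E$ has generalized mean curvature pointwise bounded by $\Lambda$, together with the natural free-boundary condition on $\partial\Omega$ required by \cite{GJ}, namely that the first variation against every vector field tangent to $\partial\Omega$ is controlled by the $\Lambda$-error.

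With this in hand I would invoke \cite[Theorem 4.9]{GJ} directly. At interior points of $\Omega$ the conclusion reduces to the classical De Giorgi-Almgren-Tamanini regularity for almost minimizers: $\rbdry E$ is $C^{1,\alpha}$-regular, and Federer's dimension-reduction argument together with Simons' theorem on area-minimizing cones bounds the interior singular set by $\dim_{\Ha}\le n-8$. At points on $\partial\Omega$, Grüter and Jost locally straighten $\partial\Omega$ and reflect the varifold across it, producing a varifold on the full ball whose generalized mean curvature remains $L^\infty$-bounded; interior regularity then yields $C^{1,\alpha}$-regularity of $M$ up to the boundary with the same dimensional bound on the singular set. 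Finally, the orthogonality $\langle\nu_M,\nu_\Omega\rangle=0$ on $\partial\Omega\setminus\Gamma$ follows from the boundary divergence identity \eqref{div-theorem}: at a $C^1$ point of $M\cap\partial\Omega$ the bound on the first variation, combined with the bounded mean curvature, forces $\int_{M\cap\partial\Omega}\langle X,\nu^*\rangle\,d\Ha^{n-2}=0$ for every $X$ tangent to $\partial\Omega$, i.e.\ $\nu^*\parallel\nu_\Omega$, which is equivalent to $\nu_M\perp\nu_\Omega$. The main obstacle is not conceptual but technical: matching precisely the hypotheses required by \cite[Theorem 4.9]{GJ} with what $\Lambda$-minimality provides, and justifying the asymptotic expansions above under the sole assumption $\chi_E\in BV(\Omega)$; once this translation is made, all the difficult analysis (interior $\eps$-regularity, monotonicity, dimension reduction, and the reflection argument at $\partial\Omega$) is absorbed into the black-box conclusion of \cite{GJ}.
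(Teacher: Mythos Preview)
Your proposal is correct and follows essentially the same route as the paper: derive a uniform $L^\infty$ bound on the generalized mean curvature vector from the $\Lambda$-minimality inequality via the flow of an admissible vector field, and then invoke the Gr\"uter--Jost regularity theorem \cite[Theorem 4.9]{GJ} together with the argument of \cite{Gr}. The one technical point you flag but leave open---justifying the asymptotic $|E_t\triangle E|\le |t|\int_{\rbdry E}|\langle X,\nu_E\rangle|\,d\Ha^{n-1}+o(t)$ under the sole hypothesis $\chi_E\in BV(\Omega)$---is handled in the paper by approximating $E$ with smooth sets $E_\eps$ (for which the estimate follows from the area formula on $M_\eps\times[0,t)$) and passing to the limit via the Reshetnyak continuity theorem.
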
 

\begin{proof}
The result follows essentially  from the work by Gr\"uter \cite{Gr}, where he proves the regularity for minimizers of the partitioning problem, i.e.  \eqref{functional} with $\gamma=0$. We need only to show that we may indeed apply the argument in  \cite{Gr}.

Let us first show that there exists a function ${\bf{H}}: M \to \R^n$ such that  
\begin{equation} \label{curv_equation}
\int_{M} \diver_\tau X \, d \mathcal{H}^{n-1} =  \int_{M} \langle X, {\bf{H}} \rangle \, d \mathcal{H}^{n-1} 
\end{equation}
for every $X \in C^1(\Omega)$ which satisfies the tangential condition \eqref{tangent.cond}, and  that $\|{\bf{H}}\|_{L^{\infty}} \leq \Lambda$. 

For a fixed vector field $X$ satisfying \eqref{tangent.cond}, consider the flow 
\[
\frac{\partial}{\partial t} \Phi(x,t) = X(\Phi(x,t)), \qquad \Phi(x, 0)= x.
\]
Since $E$ is an $\Lambda$-minimizer it holds
\begin{equation} \label{alm_min_prop}
P(E, \Omega) \leq P(\Phi(E, t), \Omega) + \Lambda |\Phi(E, t) \triangle E|
\end{equation}
for some small $t>0$. We claim that 
\[
|\Phi(E, t) \triangle E| \leq \int_0^t \int_{M}|\langle X, \nu _M \rangle| \, d \Ha^{n-1} ds + o(t).
\]

Indeed, by the standard mollifying argument we  find a sequence of smooth sets $E_{\eps}$ such that $D \chi_{E_{\eps}} \to D \chi_{E}$ weakly* in $\Omega$ and  
$|D \chi_{E_{\eps}}|(\Omega) \to |D \chi_{E}|(\Omega)$. Denote $M_{\eps}:= \overline{\partial E_{\eps} \cap \Omega}$. We may estimate the $n$-dimensional tangential Jacobian
 of $\Phi$ on $M_{\eps} \times [0,t)$ from above by  $|\langle X, \nu_{M_{\eps}} \rangle| + \eps(t)$, with a small remainder term $\eps(\cdot)$. Hence, by the  area formula, we have 
\[
\begin{split}
|\Phi(E_{\eps}, t) \triangle E_{\eps}| &= |\{ \Phi(x, s) \mid  x \in M_{\eps}, \, \, s \in [0,t)\} \\
&\leq  \int_0^t \int_{M_{\eps}}|\langle X, \nu_{M_{\eps}} \rangle| \, d \Ha^{n-1} ds + o(t).
\end{split}
\]
The above claim follows from Reshetnyak continuity theorem, \cite[Theorem 2.39]{AFP}, by letting $\eps \to 0$.

Finally letting $t \to 0$ in \eqref{alm_min_prop} we obtain
\[
\int_{M} \diver_\tau X \, d \Ha^{n-1} \leq  \Lambda \int_{M} |\langle X, \nu_M \rangle| \, d \Ha^{n-1}.
\]
Since the above inequality holds for every admissible $X$ the claim \eqref{curv_equation} follows from Riesz representation theorem.

We may thus apply the regularity result from \cite{GJ} and the $C^1$-regularity follows exactly as in \cite{Gr}. In particular,  this result implies  that,  if the excess   at a given point $x_0 \in M$ 
\[
\lambda(E, x_0, \rho):= \rho^{n-1}\left( \int_{B_{\rho}(x_0) \cap\Omega} |D \chi_E| - \big| \int_{B_{\rho}(x_0) \cap\Omega} D \chi_E \big|  \right)
\]
is small, then $x_0$ is a regular point. By  regular point we mean that  $M$ coincides with an oriented $C^1$-submanifold (with or without boundary) in a neighbourhood of $x_0$. The $C^{1, \alpha}$-regularity follows from the fact that the curvature is bounded. 
\end{proof}

It is well known that in dimension $n \geq 8$ the singular set, even of a minimal surface, might not be empty. However, as we pointed out in the proof of  Theorem \ref{gruter}, we have the full regularity, if we know a priori that the excess $\lambda(E, x, \rho)$ is small for every $x \in \Omega$.  This leads to the following classical convergence result. The proof  can be found e.g in \cite{CL} with  few modifications due to the fact that the relative boundary is a manifold with boundary.

\begin{theorem} \label{convergence}
Suppose that  $E_k$ are $\Lambda$-minimizers in $\Omega$. Suppose that $E$ has $C^{1,\alpha}$-boundary in $\Omega$ with $\alpha \in (0, 1)$. If $E_k \to E$ in $L^1$ then
\[
\partial E_k \to \partial E \quad \text{in }\,\, C^{1,\alpha}.
\]
In particular, every $E_k$ is $C^{1, \alpha}$-regular when $k$ is sufficiently large.
\end{theorem}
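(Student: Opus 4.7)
The plan is to reduce the convergence statement to the small-excess/regularity theorem invoked in Theorem \ref{gruter}. First I would establish convergence of the perimeters, then translate this into uniform smallness of the excess along $\partial E_k$, and finally bootstrap from pointwise small excess to $C^{1,\alpha}$ convergence of graph parametrizations.

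More precisely, the first step is to show $P(E_k,\Omega)\to P(E,\Omega)$. The lower bound is lower semicontinuity of the perimeter under $L^1$ convergence. For the upper bound, test the $\Lambda$-minimality inequality of $E_k$ against $E$:
\[
P(E_k,\Omega)\le P(E,\Omega)+\Lambda\,|E_k\triangle E|,
\]
and let $k\to\infty$. Together with standard density estimates for $\Lambda$-minimizers (uniform lower bounds on $|E_k\cap B_\rho(x)|/\rho^n$ and $|B_\rho(x)\setminus E_k|/\rho^n$ at boundary points, which follow from the a.m.\ property exactly as in \cite{Tam}), $L^1$ convergence plus perimeter convergence upgrades to Hausdorff convergence of the relative boundaries $M_k:=\overline{\partial E_k\cap\Omega}$ to $M$.

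The second step is the uniform excess bound. Since $E$ is $C^{1,\alpha}$-regular with the orthogonality condition \eqref{orthogonality} on $M\cap\partial\Omega$, there exists $\rho_0$ such that for every $x\in M$ the excess $\lambda(E,x,\rho_0)$ is less than the small constant $\varepsilon_0$ from \cite{GJ}. Using Hausdorff convergence together with convergence of the measures $|D\chi_{E_k}|\,\raisebox{.15ex}{$\llcorner$}\,\Omega\rightharpoonup |D\chi_{E}|\,\raisebox{.15ex}{$\llcorner$}\,\Omega$ (a consequence of $L^1$ and perimeter convergence combined with lower semicontinuity on open sets), one deduces that for $k$ large
\[
\sup_{x\in M_k}\lambda(E_k,x,\rho_0/2)\le\varepsilon_0.
\]
By the version of the De Giorgi/Grüter regularity theorem used in the proof of Theorem \ref{gruter}, this small-excess condition holds at \emph{every} point of $M_k$, so the singular set is empty and $M_k$ is $C^{1,\alpha}$-regular with uniform bounds (both interior and up to $\partial\Omega$, where the orthogonal meeting condition is enforced through Grüter's reflection argument, see \cite{Gr}).

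The third step is to promote uniform $C^{1,\alpha}$ bounds to $C^{1,\alpha}$ convergence. Cover $M$ by finitely many cylindrical neighborhoods in which $M$ is the graph of a $C^{1,\alpha}$ function $f$ over a disk in its tangent plane (a half-disk, with the orthogonality condition encoded as a Neumann-type boundary condition, near boundary points). By the Hausdorff convergence and the uniform $C^{1,\alpha}$ bound on $M_k$, for $k$ large $M_k$ is, on the same cylinder, the graph of a function $f_k$ with $\|f_k\|_{C^{1,\alpha}}$ uniformly bounded. Arzelà–Ascoli yields a subsequence converging in $C^{1,\beta}$ for every $\beta<\alpha$ to some $g$; the $L^1$ convergence $E_k\to E$ forces $g=f$, so the whole sequence converges. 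A standard interpolation argument, using the uniform $C^{1,\alpha}$ bound plus $C^0$ convergence, then gives convergence in $C^{1,\alpha'}$ for any $\alpha'<\alpha$, which is what is meant by $\partial E_k\to\partial E$ in $C^{1,\alpha}$ in this context.

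The main obstacle I expect is the behaviour near $M\cap\partial\Omega$: both the small-excess regularity theorem and the local graph representation must be formulated so that the orthogonality condition at $\partial\Omega$ is preserved under passage to the limit. This requires flattening $\partial\Omega$ by a $C^{4,\alpha}$ diffeomorphism and then applying the reflection trick of \cite{Gr} to reduce to the interior case, which is the technical heart of the argument; once the reflected boundaries are shown to be $\Lambda'$-minimizers of a perturbed functional with comparable excess, all the interior arguments above go through verbatim.
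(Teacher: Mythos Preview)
Your proposal is correct and follows essentially the same route the paper points to: the paper does not give a self-contained proof but refers to \cite{CL} ``with few modifications due to the fact that the relative boundary is a manifold with boundary,'' and your outline (perimeter convergence from testing $\Lambda$-minimality against $E$, density estimates $\Rightarrow$ Hausdorff convergence, uniform small excess, then the Gr\"uter--Jost regularity plus Arzel\`a--Ascoli, with the Gr\"uter reflection to handle $M\cap\partial\Omega$) is exactly that argument. One small remark: to obtain convergence in $C^{1,\alpha}$ rather than only $C^{1,\alpha'}$ for $\alpha'<\alpha$, use that Theorem \ref{gruter} gives uniform $C^{1,\beta}$ bounds for every $\beta\in(0,1)$, so you may run your compactness step with an exponent strictly above $\alpha$.
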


In order to exploit the relation between the local minimizers of the functional $J$ and the $\Lambda$-minimizers of the perimeter, we start recalling that there exists a dimensional constant $C$ such that for every pair of measurable sets $E,F \subset \Omega$ we have
\begin{equation} \label{lipschitz}
\left|  \int_{\Omega} |\nabla v_F|^2\, dx-  \int_{\Omega} |\nabla v_E|^2\, dx \right| \leq  C\,  |F \triangle E| \,,
\end{equation}
where $v_E$ and $v_F$ are defined by the problem \eqref{Laplace}. The previous property of Lipschitz continuity of the non-local part of the energy \eqref{functional} follows for example arguing as in 
\cite[Lemma 2.6]{AFM} (cf. also \cite[Section 2.1]{CS}). The next proposition allows us to apply the regularity theory for $\Lambda$-minimizers to local minimizers of $J$.

\begin{proposition}
\label{almost_area_2} 
Any local minimizer $E$ of the functional \eqref{functional} is a $\Lambda$-minimizer of the perimeter for some $\Lambda$ depending on $E$.
\end{proposition}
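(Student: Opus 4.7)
The plan is to reduce the $\Lambda$-minimality of $E$ to its local minimality under the volume constraint $\int_\Omega u_F\,dx=m$, after dispensing with two obstructions: the constraint itself and the nonlocal term of $J$. Let $\delta_0>0$ be the $L^1$-threshold of local minimality of $E$, so that $J(E)\le J(F)$ whenever $|F\triangle E|\le\delta_0$ and $\int_\Omega u_F\,dx=m$. Two trivial reductions are available. First, if $|G\triangle E|\ge\delta_0/2$, the inequality $P(E,\Omega)\le P(G,\Omega)+\Lambda|G\triangle E|$ holds for any $\Lambda\ge 2P(E,\Omega)/\delta_0$. Second, if $P(G,\Omega)\ge P(E,\Omega)$ the inequality holds with $\Lambda=0$. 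Hence it suffices to treat competitors $G\subset\Omega$ with $|G\triangle E|<\delta_0/2$ and $P(G,\Omega)\le P(E,\Omega)$.

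The core ingredient is a volume-fixing variation independent of $G$. Since $0<|E|<|\Omega|$, a Bogovskii-type construction provides a vector field $X\in C^\infty_c(\Omega;\R^n)$ with $\int_E\diver X\,dx=1$; by $L^1$-continuity of $G\mapsto\int_G\diver X\,dx$, one also has $\int_G\diver X\,dx\ge 1/2$ whenever $|G\triangle E|$ is sufficiently small. Let $\Phi_t$ be the flow of $X$, which is a global diffeomorphism of $\Omega$ equal to the identity outside the support of $X$. Setting $\sigma:=|E|-|G|$, with $|\sigma|\le|G\triangle E|$, an intermediate value argument applied to $t\mapsto|\Phi_t(G)|$ produces $t^*=t^*(G)$ with $|t^*|\le C|\sigma|\le C|G\triangle E|$ and $|\Phi_{t^*}(G)|=|E|$. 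Standard uniform estimates on the tangential Jacobian of $\Phi_{t^*}$ then give
\[
P(\Phi_{t^*}(G),\Omega)\le (1+C|t^*|)\,P(G,\Omega),\qquad |\Phi_{t^*}(G)\triangle G|\le C|t^*|,
\]
which, combined with the reduction $P(G,\Omega)\le P(E,\Omega)$, yields the additive bound $P(\Phi_{t^*}(G),\Omega)\le P(G,\Omega)+C_E|G\triangle E|$ and the inclusion $|\Phi_{t^*}(G)\triangle E|\le(1+C)|G\triangle E|\le\delta_0$, provided $\delta_0$ was chosen small enough.

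Setting $F:=\Phi_{t^*}(G)$, the set $F$ is an admissible competitor for the local minimality of $E$, hence $J(E)\le J(F)$. The Lipschitz estimate \eqref{lipschitz} bounds the change of the nonlocal term by $\gamma C|F\triangle E|\le C'_E|G\triangle E|$, and combining this with the perimeter estimate above gives
\[
P(E,\Omega)\le P(F,\Omega)+C'_E|G\triangle E|\le P(G,\Omega)+\Lambda|G\triangle E|,
\]
with $\Lambda$ depending only on $E$. The main obstacle is the volume-fixing step: one must fix the vector field $X$ once and for all (independent of the competitor $G$), observe that the multiplicative Jacobian estimate $(1+C|t^*|)P(G,\Omega)$ only becomes an additive perimeter cost after discarding competitors with $P(G,\Omega)>P(E,\Omega)$, and verify that the modified set $F=\Phi_{t^*}(G)$ still lies within the $L^1$-window of local minimality of $E$.
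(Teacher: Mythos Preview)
Your argument is correct and self-contained, but it follows a different route from the paper's proof. The paper does not build an explicit volume-fixing flow; instead it quotes \cite[Proposition~2.7]{AFM} to assert that a local minimizer of $J$ under the volume constraint is automatically a local minimizer of the penalized functional
\[
F\mapsto J(F)+\lambda\big||F|-|E|\big|
\]
for some $\lambda>0$ and some $L^1$-neighbourhood $\{|F\triangle E|<\delta\}$. From there the $\Lambda$-minimality is immediate: for $|G\triangle E|<\delta$ one has $J(E)\le J(G)+\lambda\big||G|-|E|\big|$, and the Lipschitz estimate \eqref{lipschitz} converts this into $P(E,\Omega)\le P(G,\Omega)+(\lambda+C\gamma)|G\triangle E|$; the case $|G\triangle E|\ge\delta$ is handled trivially as you do.

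The two strategies are morally the same---the penalization result in \cite{AFM} is itself proved by a volume-adjustment argument---but packaged differently. Your direct construction avoids the external citation and makes the mechanism transparent, at the cost of the extra case-split $P(G,\Omega)\le P(E,\Omega)$ needed to turn the multiplicative Jacobian bound into an additive one. One small slip: the phrase ``provided $\delta_0$ was chosen small enough'' is not quite right, since $\delta_0$ is the given local-minimality threshold and is not yours to shrink. What you actually need is to replace the cutoff $\delta_0/2$ in your first trivial reduction by $\delta_0/(1+C)$ (or any smaller constant), so that $(1+C)|G\triangle E|\le\delta_0$ is guaranteed in the remaining case.
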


\begin{proof}
Following the same argument of \cite[Proposition 2.7]{AFM} 
we can show that, if $E$ is a local minimizer for $J$, there exist $\delta>0$ and $\lambda>0$ such that $E$ solves the following penalized minimization problem
\begin{equation}
\label{volum_penal}
\min\{ J(F) + \lambda \big| |F| - |E| \big| \, : \, F \subset \Omega ,\, \, |F \triangle E| < \delta  \}.
\end{equation}
Let $G \subset \Omega$, according to Definition \ref{almost_area}, we need to show that 
\[
P(E, \Omega) \leq P(G, \Omega) + \Lambda |G \triangle E|
\]
for some large $\Lambda$. If $|G \triangle E| \geq \delta$, the above inequality is trivially true. If $|G \triangle E| < \delta$, then \eqref{volum_penal} yields
\[
J(E) \leq  J(G) + \lambda \big| |G| - |E| \big|.
\]
The Lipschitz continuity of the non-local part \eqref{lipschitz} gives
\[
\begin{split}
P(E, \Omega) &\leq P(G, \Omega)  + \lambda \big| |G| - |E|  \big|  + C\gamma |G \triangle E| \\
&\leq  P(G, \Omega) + (\lambda+ C\gamma)  |G \triangle E|.
\end{split}
\] 
proving the claim.
\end{proof}

In view of Proposition \ref{almost_area_2} we can invoke the regularity for $\Lambda$-minimizers proved in Theorem \ref{gruter} and the Proposition \ref{int_regularity}, to infer that every local minimizer of \eqref{functional}  is locally $C^{\infty}$-regular outside a critical set $\Gamma$ with $\dim_{\Ha}(\Gamma) \leq n-8$. This motivates us to define regular critical sets as critical sets with no singularities.

\section{Second variation formula}


In this section we calculate the second variation of the functional \eqref{functional} and our aim is to write it in a form where the quadratic structure appears. As it has been done in \cite{AFM} for the periodic boundary conditions case, we generalize the formula from \cite{ChSt2} and calculate the second variation at any regular set $E$, not necessarily critical. 

We assume that $E \subset \Omega$ is $C^2$-regular. 
As in Section 2 we consider a vector field $X \in C^2(\overline{\Omega}; \R^n)$ which satisfies the tangential condition \eqref{tangent.cond} and the associated flow  $\Phi : \Omega \times (-t_0, t_0) \to \Omega$ defined by \eqref{flow}.
We define the second variation at $E$ in the direction of $X$ or with respect to the flow $\Phi$ as
\[
\frac{d^2}{d t^2} J(E_t) \big|_{t=0},
\]
where $E_t= \Phi(E,t)$. Notice that we have not yet imposed any condition on the volume of $|E_t|$.

A major technical challenge is the calculation that involves the local part of the energy, $P(\cdot , \Omega)$. We follow a slightly different path than \cite[Proposition 3.9]{CMM} where the general second variation formula of the perimeter in $\R^n$ is derived. Instead of differentiating the first variation formula 
we will first calculate the second  derivative of  $P(E_t, \Omega)$, as it is done e.g. in \cite{Giusti},  and then  use the divergence theorem  in order to find its ``correct'' form. This saves us from differentiating the boundary term
\[
 \int_{M \cap \partial \Omega }\langle X, \nu^* \rangle \, d \Ha^{n-2}
\]
which appears in the first variation formula. Along the calculations it will become clear that we need to assume the orthogonality condition \eqref{orthogonality} on $E$ in order to find the quadratic structure in the second variation at $E$. However, this is not too restrictive since, as we pointed out in the previous section, \eqref{orthogonality} is a necessary condition for local minimality (indeed even for $\Lambda$-minimality by Theorem \ref{gruter}), thus it can be viewed as a part of the regularity assumption on $E$.

Imposing the orthogonality condition on $E$ implies $\nu^* = \nu_{\Omega}$ on $M \cap \partial \Omega$ and, by \eqref{div-theorem}, we may write the integration by parts formula for any $f \in C^1(\Omega)$ as
\[
\int_{M} f \diver_\tau X \, d \Ha^{n-1} = -  \int_M \langle D_{\tau}f, X \rangle \,  d \Ha^{n-1} + \int_M H  f \langle X, \nu_M \rangle \,   d \Ha^{n-1} +  \int_{M \cap \partial \Omega } f \langle X, \nu_{\Omega} \rangle \, d \Ha^{n-2},
\]
where we recall that $D_{\tau}f = Df - \langle Df, \nu_M \rangle \nu_M$ is the tangential derivative on $M$.

\begin{proposition}
\label{2ndVariation}
Let  $E \subset \R^n$ be a $C^2$-regular set which satisfies the orthogonality condition \eqref{orthogonality} 
and $X \in C^2(\overline{\Omega}; \R^n)$ be a vector field satisfying the tangential condition  \eqref{tangent.cond}. Then the second variation of \eqref{functional} at $E$ in the direction of $X$ can be written as
\[
\begin{split}
\frac{d^2 J(E_t)}{dt^2} \big|_{t=0} = &\int_{M} \left( \big| D_{\tau}\langle X, \nu_M \rangle \big|^2- | B_M |^2  \langle X, \nu_M \rangle^2 \right)\, d\Ha^{n-1}  -  \int_{M \cap \partial \Omega}  B_{\partial \Omega}(\nu_M,\nu_M) \langle X , \nu_M  \rangle^2  \,d \Ha^{n-2} \\
&+ 8 \gamma \int_{M}  \int_{M}  G(x,y) \langle X(x), \nu_M \rangle   \langle X(y), \nu_M \rangle\, d\Ha^{n-1}(x)d\Ha^{n-1}(y)\\
&+  4 \gamma \int_{M}  \langle \nabla v_{E}, \nu_M \rangle \,   \langle X, \nu_M \rangle^2 \, d\Ha^{n-1} -\int_M (H + 4 \gamma v_E)\diver_{\tau} \left( X_{\tau} ( \langle X, \nu_M \rangle) \right) \, d \Ha^{n-1} \\
&+ \int_M (H + 4 \gamma v_E)\diver(X)  \langle X, \nu_M \rangle  \, d \Ha^{n-1}  .
\end{split}
\]
Here $ | B_M |^2$ is the sum of the square of the principal curvatures of $M$, $B_{\partial \Omega}$ stands for the second fundamental form of $\partial \Omega$, i.e., the Hessian of the distance function from  the boundary and  $X_{\tau} = X- X_{\nu}$ where $X_{\nu}= \langle X,\nu_M \rangle \nu_M$.  
\end{proposition}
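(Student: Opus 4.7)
The plan is to split $J(E_t)=P(E_t,\Omega)+\gamma\int_\Omega|\nabla v_{E_t}|^2\,dx$, compute the second derivative of each summand at $t=0$, and then reorganize everything so that the quadratic structure in $\varphi:=\langle X,\nu_M\rangle$ becomes visible, with all non-quadratic residuals carrying a factor of either $H$ (from the perimeter) or $v_E$ (from the nonlocal part), ultimately to be combined into the single factor $(H+4\gamma v_E)$.

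For the perimeter piece I would use $P(E_t,\Omega)=\int_M J_\tau\Phi_t\,d\Ha^{n-1}$, where $J_\tau$ denotes the tangential Jacobian. Expanding $\Phi_t(x)=x+tX(x)+\tfrac{t^2}{2}Z(x)+O(t^3)$, with $Z=(DX)X$ the acceleration of the flow at $t=0$, and using the determinantal identity $\det(I+tA+\tfrac{t^2}{2}B+\cdots)=1+t\operatorname{tr}A+\tfrac{t^2}{2}\bigl(\operatorname{tr}B+(\operatorname{tr}A)^2-\operatorname{tr}(A^2)\bigr)+\cdots$ restricted to the tangent space of $M$, one obtains
\[
\frac{d^2}{dt^2}P(E_t,\Omega)\Big|_{t=0}=\int_M\bigl[(\diver_\tau X)^2-\operatorname{tr}_\tau((D_\tau X)^2)+\diver_\tau Z\bigr]\,d\Ha^{n-1}.
\]
I would then apply the tangential divergence theorem to the $\diver_\tau Z$ piece, which produces a boundary integral on $M\cap\partial\Omega$. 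This is where the orthogonality condition \eqref{orthogonality} enters crucially: it forces $\nu^*=\nu_\Omega$ along $\partial M$, so the co-normal boundary contribution becomes $\int_{M\cap\partial\Omega}\langle Z,\nu_\Omega\rangle\,d\Ha^{n-2}$. Differentiating the identity $\langle X,\nu_\Omega\rangle\equiv 0$ on $\partial\Omega$ in the direction $X$ yields $\langle Z,\nu_\Omega\rangle=-B_{\partial\Omega}(X,X)$, producing the boundary second-fundamental-form term of the final formula.

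Next I would decompose $X=X_\tau+\varphi\nu_M$ along $M$, use $\diver_\tau\nu_M=H$, and invoke the standard algebraic identity that, after one tangential integration by parts on $M$, rewrites $(\diver_\tau X)^2-\operatorname{tr}_\tau((D_\tau X)^2)$ as $|D_\tau\varphi|^2-|B_M|^2\varphi^2$ plus terms linear in $X_\tau$ carrying a factor $H$. The orthogonality condition ensures that on $\partial M$ the vector $\nu_M$ is tangent to $\partial\Omega$, hence $X_\tau|_{\partial M}$ is itself tangent to $\partial M$, and the expansion $B_{\partial\Omega}(X,X)=\varphi^2 B_{\partial\Omega}(\nu_M,\nu_M)+2\varphi\,B_{\partial\Omega}(X_\tau,\nu_M)+B_{\partial\Omega}(X_\tau,X_\tau)$ decouples: the $\varphi^2$ piece furnishes the stated boundary term, while the $X_\tau$-contributions combine with the interior $H$-pieces to form the residuals $-\int_M H\,\diver_\tau(X_\tau\varphi)\,d\Ha^{n-1}$ and $\int_M H\,\diver(X)\,\varphi\,d\Ha^{n-1}$.

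For the nonlocal part I would represent $\gamma\int_\Omega|\nabla v_{E_t}|^2\,dx=\gamma\iint_{\Omega\times\Omega}G(x,y)(u_{E_t}(x)-m_t)(u_{E_t}(y)-m_t)\,dx\,dy$ and differentiate in $t$, using that $\partial_t u_{E_t}$ acts as a surface measure on $\partial E_t$ with density proportional to $\varphi$. The first derivative reproduces $4\gamma\int_M v_E\varphi\,d\Ha^{n-1}$; differentiating once more, the variation of $u_{E_t}$ in both slots yields the double integral $8\gamma\iint_{M\times M}G(x,y)\varphi(x)\varphi(y)$, transporting the evaluation along $\Phi_t$ yields $4\gamma\int_M\langle\nabla v_E,\nu_M\rangle\varphi^2\,d\Ha^{n-1}$, and the remaining pieces are proportional to $v_E$ times $\diver$-type expressions in $X$, which combine with the $H$-residuals from the perimeter into the single $(H+4\gamma v_E)$-factor shown in the statement. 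The main obstacle, as usual in this class of computations, is the careful bookkeeping at $\partial M=M\cap\partial\Omega$: without \eqref{orthogonality} the various integration-by-parts steps would leave boundary contributions with no clean quadratic interpretation, the cancellation of the $X_\tau$ cross-terms at $\partial M$ would fail, and the identification of $B_{\partial\Omega}(\nu_M,\nu_M)\varphi^2$ as the sole boundary contribution to the quadratic form would not be possible.
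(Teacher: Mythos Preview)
Your overall strategy matches the paper's: split $J$, compute $P''(0)$ and $F''(0)$ separately, decompose $X=X_\tau+\varphi\,\nu_M$, and push the non-quadratic residuals into $(H+4\gamma v_E)$-weighted integrals. Two concrete points, however, need repair.

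First, the second-variation formula you write for the perimeter is missing a term. The tangential Jacobian is $\sqrt{\det\bigl((D_\tau\Phi_t)^T D_\tau\Phi_t\bigr)}$, the square root of a Gram determinant, and its second-order expansion picks up the \emph{normal} components of $D_\tau X$; applying your determinantal identity ``restricted to the tangent space'' drops exactly these. The correct expression (Giusti, Theorem~10.4) is
\[
P''(0)=\int_M\Bigl[(\diver_\tau X)^2+|(D_\tau X)^T\nu_M|^2-\trace\bigl((D_\tau X)^2\bigr)+\diver_\tau Z\Bigr]\,d\Ha^{n-1}.
\]
The extra term $|(D_\tau X)^T\nu_M|^2$ is not negotiable: upon decomposing $X$, it is precisely $|(D_\tau X_\nu)^T\nu_M|^2=|D_\tau\varphi|^2+(\text{cross terms})$ that furnishes the $|D_\tau\varphi|^2$ of the final formula. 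Note also that the tangential divergence theorem applied to $\diver_\tau Z$ yields, besides the boundary integral you mention, a bulk term $\int_M H\langle Z,\nu_M\rangle\,d\Ha^{n-1}$, which carries part of the residual $(H+4\gamma v_E)$-contribution.

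Second, the mechanism by which the $X_\tau$-contributions disappear is not quite the one you describe. After the split, the mixed interior terms are reduced (via integration by parts and the identity $\Delta_M\langle X_\tau,\nu_M\rangle=0$) to a boundary contribution that cancels $2\varphi\,B_{\partial\Omega}(X_\tau,\nu_M)$, using $\langle D\nu_M\,X_\tau,\nu_\Omega\rangle+\langle D\nu_\Omega\,X_\tau,\nu_M\rangle=0$ on $\partial M$. The pure-$X_\tau$ block, however, does not ``combine with the interior $H$-pieces'': it is eliminated by a separate observation. Since $X_\tau$ is tangent to $M$ and, thanks to orthogonality together with $\langle X,\nu_\Omega\rangle=0$, tangent to $\partial M$ along $\partial M$, its flow is a family of diffeomorphisms of $M$ and therefore has vanishing second variation of the perimeter. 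Writing out the raw $P''$-formula with $X$ replaced by $X_\tau$ then gives an identity that kills the entire pure-$X_\tau$ package, boundary term $B_{\partial\Omega}(X_\tau,X_\tau)$ included. Without isolating this device the bookkeeping does not close.
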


Before giving the proof, we remark that  the above formula agrees with the quadratic form \eqref{quadratic} with two additional terms for $\varphi := \langle X, \nu_M \rangle$. It turns out that both of these extra terms vanish if $E$ is critical and the flow is volume preserving. Indeed, if $|E_t| = |E|$, we deduce, by standard calculations (cf. \cite[equations (2.29) and (2.30)]{ChSt2} or \cite[equations (2.15) and (2.17)]{StZ}), that
\begin{equation} \label{1st_vol}
0 = \frac{d}{dt}|E_t|  \big|_{t=0}  = \int_{E} \diver (X) \, dx = \int_{M } \langle X, \nu_M \rangle \, d \Ha^{n-1},
\end{equation}
which ensure \eqref{volume1} and 
\begin{equation} \label{2nd_vol}
0 =\frac{d^2}{dt^2}|E_t|  \big|_{t=0}  = \int_{E} \diver \left( \diver (X) X \right) \, dx = \int_{M} \diver (X)  \langle X, \nu_M \rangle \, d \Ha^{n-1}.
\end{equation}
Therefore, if $E$ is a regular critical set, since $H + 4 \gamma v_E$ is constant, for a volume preserving flow, we obtain
\[
\frac{d^2 J(E_t)}{dt^2} \big|_{t=0} = \partial^2 J(E)[\varphi], 
\]
where $ \partial^2 J(E)$ is the quadratic form \eqref{quadratic}.  



\begin{proof}[Proof of the Proposition \ref{2ndVariation}]
We analyze separately the perimeter and the non-local part of the energy for which we use respectively the notations 
\[
P(t) = P(E_t, \Omega), \qquad F(t) = \gamma \int_{\Omega} |\nabla v_{E_t}|^2 \, dx.
\]

The value of $F''(0)$ has been calculated in \cite{ChSt2} in the periodic setting. However, as it is pointed out in \cite[Remark 2.8]{ChSt2},  replacing the Neumann boundary condition does not produce any new terms to it and the formula for $F''(0)$ is the same as in the periodic case.  Recalling the condition \eqref{tangent.cond} we thus have (cf. \cite[Proof of Theorem 2.6, step 3]{ChSt2})
\begin{equation}
\label{2VarNonLoc}
\begin{split}
F''(0) =&  8 \gamma \int_{M}  \int_{M}  G(x,y) \langle X(x), \nu \rangle   \langle X(y), \nu \rangle\, d\Ha^{n-1}(x)d\Ha^{n-1}(y)\\
&+  4 \gamma \int_{M}  \diver (v_E X)\,   \langle X, \nu \rangle \, d\Ha^{n-1}.
\end{split}
\end{equation}
where for simplicity we have set (and we will continue to use this convention) $\nu = \nu_M$. Also the expression for $P''(0)$ is well known, indeed, from \cite[Theorem 10.4]{Giusti}, we have
\[
P''(0) = \int_{M} \left( \diver_\tau Z + (\diver_\tau X)^2 +  |(D_{\tau}X)^T \nu_M|^2-  \trace   (D_{\tau} X)^2 \right) \, d \Ha^{n-1},
\]
where $Z$ is the acceleration vector field given by  $Z = DX \cdot X$, $\diver_\tau X=\diver X -  \langle DX \nu, \nu \rangle$ and $D_{\tau} X=DX -  (DX \nu) \otimes  \nu$ i.e. $(D_{\tau} X)_{i,j}= \langle D_{\tau}X_j , e_i \rangle$. By the divergence theorem on $M$ we have
\begin{equation}
\label{E''}
P''(0) = \int_{M}  \left(  |(D_{\tau}X)^T \nu|^2 + (\diver_\tau X)^2 -  \trace  \left( (D_{\tau} X)^2 \right) + H \langle Z, \nu \rangle \right)\, d\Ha^{n-1} +   \int_{M \cap \Omega} \langle Z, \nu_{\Omega}\rangle \, d\Ha^{n-2},
\end{equation}
where we have used that $\nu^*= \nu_{\Omega}$ on $M \cap \partial \Omega$ by the orthogonality condition.  Notice that this is the second variation formula for the perimeter $P(E_t, \Omega) $ given by any vector field $X$. 

We have to manipulate \eqref{E''} in order to  find the quadratic structure in it. To this aim we will use the decomposition of $X$ in its tangential and normal component with respect to $M$, i.e. we write $X = X_{\nu} + X_{\tau}$ where $X_{\nu} = \langle X, \nu \rangle \nu$ and rewrite every term in \eqref{E''} acordingly.
Let us start with the first term. Since $D \nu \, \nu = 0$ we have $(D_{\tau} X_{\nu})^T \nu =  D_{\tau}\langle X , \nu  \rangle$ and therefore
\begin{equation}
\label{1st}
\begin{split}
|(D_{\tau}X)^T \nu|^2 &=  |(D_{\tau}X_{\nu})^T \nu|^2  +2 ((D_{\tau}X_{\nu})^T \nu ) \cdot ( (D_{\tau} X_{\tau})^T \nu) +|(D_{\tau}X_{\tau})^T \nu|^2 \\
&= | D_{\tau}\langle X , \nu  \rangle|^2 + 2 \,   D_{\tau}\langle X , \nu  \rangle  \cdot  ((D_{\tau} X_{\tau})^T \nu ) +|(D_{\tau}X_{\tau})^T \nu|^2 .
\end{split}
\end{equation}
Since $\diver_\tau X_{\nu} = H \langle X, \nu \rangle$  the second term in \eqref{E''} can be written as 
\begin{equation}
\label{2nd}
\begin{split}
 ( \diver_\tau X)^2 &=  (\diver_\tau X_{\nu}+  \diver_\tau X_{\tau})^2 \\
&= H^ 2 \langle X, \nu \rangle^ 2 + 2 H \langle X, \nu \rangle \diver_\tau X_{\tau }  +  ( \diver_\tau X_{\tau })^2  .
\end{split}
\end{equation}
For the third term in \eqref{E''} we will use the equalities $(D_{\tau} X_{\nu})^2= \langle X , \nu  \rangle^2 (D \nu)^2 + (D_{\tau} \langle X , \nu  \rangle \cdot \nu ) \, \nu \otimes D_{\tau}\langle X , \nu  \rangle$ and $ (D_{\tau} X_{\tau}) (D_{\tau} X_{\nu}) =  \langle X , \nu  \rangle D \nu\, D_{\tau} X_{\tau} $. Hence, we deduce
\begin{equation}
\label{3th}
\begin{split}
 \trace   (D_{\tau} X)^2 &=  \trace   (D_{\tau} X_{\nu})^2 +2 \trace   \big(  (D_{\tau} X_{\tau}) (D_{\tau} X_{\nu})\big) +\trace   (D_{\tau} X_{\tau})^2 \\
&= \langle X , \nu  \rangle^2 |B_M|^2  +2\langle X , \nu  \rangle \trace \big( D \nu\, D_{\tau} X_{\tau} \big) +\trace   (D_{\tau} X_{\tau})^2 .
\end{split}
\end{equation}
We treat the fourth term in \eqref{E''} by writing 
\begin{equation} \label{4th}
\begin{split}
 \langle Z, \nu \rangle = \langle DX X , \nu \rangle &=  \langle \left(  DX (X_{\nu}) + D (X_{\nu}) (X_{\tau}) +  D (X_{\tau}) (X_{\tau}) \right), \nu \rangle \\
 &=   \langle X , \nu \rangle  \langle DX \, \nu, \nu \rangle +  D_{\tau} \langle X, \nu \rangle \cdot X_{\tau} + \langle Z_{\tau}, \nu \rangle \\
&= \langle X , \nu \rangle  \diver X -  \langle X , \nu \rangle  \diver_\tau X +D_{\tau} \langle X, \nu \rangle \cdot X_{\tau}  + \langle Z_{\tau}, \nu \rangle \\
&= \langle X , \nu \rangle  \diver X -     \diver_\tau (X_{\tau} \langle X , \nu \rangle ) - H \langle X, \nu \rangle^2 + 2X_{\tau} \cdot D_{\tau} \langle X, \nu \rangle  + \langle Z_{\tau}, \nu \rangle ,
\end{split}
\end{equation}
where we have set $Z_{\tau} =  D (X_{\tau}) (X_{\tau})$ and in the last equality we have used $\diver_\tau X_{\nu} = H \langle X, \nu \rangle$.

For the last term $ \int_{M \cap \Omega} \langle Z, \nu_{\Omega} \rangle \, d\Ha^{n-2}$ we notice that the tangential condition \eqref{tangent.cond} on $X$  implies $\langle X, \nu_{\Omega} \rangle = 0$ on  $M \cap \partial \Omega$. In particular,   $D \langle X, \nu_{\Omega} \rangle \cdot X = 0$ on $M \cap \partial \Omega$. Hence the last term in \eqref{E''}  becomes
\begin{equation} \label{5th}
 \int_{M\cap \partial \Omega}    \langle Z, \nu_{\Omega} \rangle \, d \Ha^{n-2} =  \int_{M\cap \partial \Omega}  \langle DX X, \nu_{\Omega} \rangle \, d \Ha^{n-2} = - \int_{M\cap \partial \Omega}  \langle D \nu_{\Omega} X ,X \rangle \, d \Ha^{n-2}.
\end{equation}
From now on we will use the notation $\langle D \nu_{\Omega} X ,X \rangle = B_{\partial \Omega}(X, X)$ that has meaning again by \eqref{tangent.cond}.
We use \eqref{1st},  \eqref{2nd},  \eqref{3th},  \eqref{4th} and  \eqref{5th} to  rewrite \eqref{E''} as
\begin{equation}
\label{all_terms}
\begin{split}
P''(0) =& \int_{M} |D_{\tau} \langle X , \nu  \rangle|^2- |B_M|^2 \langle X, \nu \rangle^2 \, d \Ha^{n-1} -  \int_{M \cap \partial \Omega} B_{\partial \Omega}(X, X) \,d \Ha^{n-2}  \\
&+ 2 \int_{M}D_{\tau}\langle X , \nu  \rangle  \cdot  ((D_{\tau} X_{\tau})^T \nu )\, d\Ha^{n-1} + 2\int_{M}H \left( \langle X, \nu \rangle \diver_\tau X_{\tau } \right)  \, d\Ha^{n-1} \\
&- 2  \int_{M} \langle X , \nu  \rangle \trace \big( D \nu\, D_{\tau} X_{\tau} \big) \, d\Ha^{n-1}+  2  \int_{M} H \left( X_{\tau} \cdot D_{\tau} \langle X, \nu \rangle \right)\, d\Ha^{n-1} \\
&+  \int_{M}H \left(   2X_{\tau} \cdot D_{\tau} \langle X, \nu \rangle +  \langle X , \nu \rangle  \diver X  -     \diver_\tau (X_{\tau} \langle X , \nu \rangle )  \right)\, d \Ha^{n-1}  \\
&+ \int_{M} ( \diver_\tau X_{\tau})^2+  |(D_{\tau}X_{\tau})^T \nu|^2-  \trace   (D_{\tau} X_{\tau})^2 + H \langle Z_{\tau}, \nu \rangle  \, d\Ha^{n-1} .
\end{split}
\end{equation}

We continue by treating the mixed terms in \eqref{all_terms}.  We  integrate by parts the third term in \eqref{all_terms}  and deduce
\begin{equation}
\label{int_by_parts1}
\begin{split}
2\int_{M} D_{\tau}\langle X , \nu  \rangle  \cdot  ((D_{\tau} X_{\tau})^T \nu )\, d\Ha^{n-1}  = &- 2\int_{M} \langle X , \nu  \rangle  \diver_\tau  \big( (D_{\tau} X_{\tau})^T \nu \big)\, d\Ha^{n-1} \\
 & + 2\int_{M \cap \partial \Omega}\langle X , \nu  \rangle   \langle (D_{\tau} X_{\tau})^T \nu , \nu_{\Omega}\rangle \, d\Ha^{n-2}\\
= & - 2\int_{M} \langle X , \nu  \rangle  \diver_\tau  \big( (D_{\tau} X_{\tau})^T \nu \big)\, d\Ha^{n-1} \\
 & - 2\int_{M \cap \partial \Omega}\langle X , \nu  \rangle   \langle D \nu \,  X_{\tau}  , \nu_{\Omega}\rangle \, d\Ha^{n-2},
\end{split}
\end{equation} 
where we have used that  
\[
\langle D_{\tau} X_{\tau} \,  \nu_{\Omega} , \nu \rangle  +\langle D \nu \, X_{\tau}  , \nu_{\Omega} \rangle =D_{\tau} \langle X_{\tau}, \nu \rangle \cdot \nu_{\Omega}  =0.
\] 
For the fourth  term in \eqref{all_terms} we observe that, by the orthogonality condition, $X_{\tau}$ vanishes on $M \cap \partial \Omega$ and therefore integration by parts yields
\begin{equation}
\label{int_by_parts2}
\begin{split}
2 \int_{M} H \langle X, \nu \rangle \diver_\tau X_{\tau } \, d\Ha^{n-1} = &   - 2 \int_{M} D_{\tau} \left(  H \langle X, \nu \rangle \right)  \cdot X_{\tau}\, d\Ha^{n-1}  \\ 
& + 2 \int_{M \cap \partial \Omega} H \langle X , \nu  \rangle   \langle  X_{\tau} , \nu_{\Omega}\rangle \, d\Ha^{n-2} \\
= & - 2 \int_{M} H  \, X_{\tau} \cdot  D_{\tau }\langle X , \nu  \rangle\, d\Ha^{n-1}  \\
& - 2 \int_{M} \langle X , \nu  \rangle D_{\tau }H  \cdot X_{\tau}\, d\Ha^{n-1}.
\end{split}
\end{equation}

Next, using the notation $\delta_i$ for partial derivative on $M$, i.e., $\delta_i g= \partial_{x_i}g - \langle Dg , \nu \rangle \nu_i $ for any smooth function $g$, we use that  $ \delta_j \nu_i =  \delta_i \nu_j $ and $ \sum_{i,j}^n  (\delta_j \delta_i \nu_i ) X_{\tau}^{j} = \sum_{i,j}^n  (\delta_i \delta_i \nu_j ) X_{\tau}^{j} $ (see \cite[Lemma 10.7]{Giusti}) to obtain 
\begin{equation}
\label{mixed_terms}
\begin{split}
 \diver_\tau  \big( (D_{\tau} X_{\tau})^T \nu \big) &+ D_{\tau }H  \cdot X_{\tau}+ \trace \big( D \nu\, D_{\tau} X_{\tau} \big)  \\
&= \sum_{i,j}^n  \delta_i (\delta_i X_{\tau}^{j} \, \nu_j) + (\delta_j \delta_i \nu_i ) X_{\tau}^{j} + \delta_j \nu_i \delta_i X_{\tau}^{j} \\
&=  \sum_{i,j}^n  \delta_i (\delta_i X_{\tau}^{j} \, \nu_j) +  (\delta_i \delta_i \nu_j ) X_{\tau}^{j} + \delta_i \nu_j \delta_i X_{\tau}^{j} \\
&= \sum_{i,j}^n  \delta_i \delta_i ( X_{\tau}^{j} \, \nu_j)  = \Delta_M  \langle  X_{\tau}, \nu \rangle = 0,
\end{split}
\end{equation}
where $\Delta_M = \sum_{i=1}^n \delta_i \delta_i$ is the Laplacian on $M$. 
Together with \eqref{int_by_parts1}, \eqref{int_by_parts2} and \eqref{mixed_terms} the formula \eqref{all_terms} becomes
\begin{equation}
\label{all_terms2}
\begin{split}
P''(0) = & \int_{M} |D_{\tau} \langle X , \nu  \rangle|^2- |B_M|^2 \langle X, \nu \rangle^2 \, d \Ha^{n-1} \\
&-  \int_{M \cap \partial \Omega} B_{\partial \Omega}(X, X) \,d \Ha^{n-2}  - 2\int_{M \cap \partial \Omega}\langle X , \nu  \rangle   \langle D \nu \,  X_{\tau}  , \nu_{\Omega}\rangle \, d\Ha^{n-2}  \\
&+  \int_{M}H \left(  \langle X , \nu \rangle  \diver X  -     \diver_\tau (X_{\tau} \langle X , \nu \rangle )  \right)\, d \Ha^{n-1}  \\
&+ \int_{M} ( \diver_\tau X_{\tau})^2+  |(D_{\tau}X_{\tau})^T \nu|^2-  \trace   (D_{\tau} X_{\tau})^2 + H \langle Z_{\tau}, \nu \rangle  \, d\Ha^{n-1} .
\end{split}
\end{equation}

With the aim of understanding the last row in the previous formula, we observe that, as we noticed before, $X_{\tau}$ vanishes on $M \cap \partial \Omega$ and therefore the flow associated  by  $X_{\tau}$ leaves $M$ unchanged. The second variation (formula \eqref{E''}) of the perimeter with in the direction the vector field $X_{\tau}$ in then zero, in other words we have
\begin{equation}
\label{unchanged}
\int_{M} ( \diver_\tau X_{\tau})^2+  |(D_{\tau}X_{\tau})^T \nu|^2-  \trace   (D_{\tau} X_{\tau})^2 + H \langle   Z_{\tau}, \nu \rangle  \, d\Ha^{n-1} +  \int_{M \cap \partial \Omega}  \langle Z_{\tau}, \nu_{\Omega} \rangle  \, d\Ha^{n-2} = 0.
\end{equation}

Since $X_{\tau}$ vanishes on $M \cap \partial \Omega$, we have $D \langle X_{\tau}, \nu_{\Omega} \rangle \cdot X_{\tau} = 0$  on. This yields
\begin{equation}
\label{boundary1}
\int_{M \cap \partial \Omega}  \langle Z_{\tau}, \nu_{\Omega} \rangle  \, d\Ha^{n-2}=  \int_{M\cap \partial \Omega}  \langle D(X_{\tau}) X_{\tau}, \nu_{\Omega} \rangle \, d \Ha^{n-2} = - \int_{M \cap \partial \Omega} B_{\partial \Omega}(X_{\tau}, X_{\tau}) \,d \Ha^{n-2}.
\end{equation}
Moreover, since $\langle \nu, \nu_{\Omega}\rangle = 0$ on  $M \cap \partial \Omega$  we get $0= D \langle \nu, \nu_{\Omega} \rangle \cdot X_{\tau} =  \langle D \nu \,  X_{\tau}  , \nu_{\Omega}\rangle +  \langle D \nu_{\Omega} \,  X_{\tau}  , \nu\rangle$ and therefore
\begin{equation}
\label{boundary2}
\begin{split}
\int_{M \cap \partial \Omega}\langle X , \nu  \rangle   \langle D \nu \,  X_{\tau}  , \nu_{\Omega}\rangle \, d\Ha^{n-2} =  &  -\int_{M \cap \partial \Omega} \langle D \nu_{\Omega} \,  X_{\tau}  , X_{\nu}\rangle\, d\Ha^{n-2}  \\ 
= &  -\int_{M \cap \partial \Omega} B_{\partial \Omega} ( X_{\tau}  , X_{\nu})\, d\Ha^{n-2}.
\end{split}
\end{equation}
We combine \eqref{2VarNonLoc}  and \eqref{all_terms2}, together with  \eqref{unchanged}, \eqref{boundary1} and \eqref{boundary2} to write the second variation as
\begin{equation}
\label{almost_there}
\begin{split}
\frac{d^2 J(E_t)}{dt^2} \big|_{t=0} =& \int_{M} \left( \big| D_{\tau}\langle X, \nu_M \rangle \big|^2- | B_M |^2  \langle X, \nu_M \rangle^2 \right)\, d\Ha^{n-1}  -  \int_{M \cap \partial \Omega}  B_{\partial \Omega}(\nu,\nu) \langle X , \nu  \rangle^2  \,d \Ha^{n-2} \\
&+  \int_{M}H \left(  \langle X , \nu \rangle  \diver X  -     \diver_\tau (X_{\tau} \langle X , \nu \rangle )  \right)\, d \Ha^{n-1}  \\
&+ 8 \gamma \int_{M}  \int_{M}  G(x,y) \langle X(x), \nu \rangle   \langle X(y), \nu \rangle\, d\Ha^{n-1}(x)d\Ha^{n-1}(y)\\
&+  4 \gamma \int_{M}  \diver (v_E X)\,   \langle X, \nu \rangle \, d\Ha^{n-1}.
\end{split}
\end{equation}

Finally we integrate by parts to obtain
\[
\begin{split}
 \int_{M}  \diver (v_E X)\,   \langle X, \nu \rangle \, d\Ha^{n-1} = &  \int_{M}  \langle Dv_E , X\rangle   \langle X, \nu \rangle \, d\Ha^{n-1}+  \int_{M}  v_E \diver ( X)\,   \langle X, \nu \rangle \, d\Ha^{n-1} \\
= &  \int_{M}  \langle D_{\tau}v_E , X_{\tau}\rangle   \langle X, \nu \rangle \, d\Ha^{n-1} + \int_{M}  \langle \nabla v_E, \nu \rangle  \langle X, \nu \rangle^2 \, d\Ha^{n-1} \\
 & +  \int_{M}  v_E \diver ( X)\,   \langle X, \nu \rangle \, d\Ha^{n-1} \\
= & - \int_{M}  v_E  \diver ( X_{\tau} \langle X, \nu \rangle ) \, d\Ha^{n-1} + \int_{M} \langle \nabla v_E, \nu \rangle    \langle X, \nu \rangle^2 \, d\Ha^{n-1} \\
 & +    \int_{M}  v_E \diver ( X)\,   \langle X, \nu \rangle \, d\Ha^{n-1},
\end{split}
\]
where the last equality follows again from the fact that $X_{\tau}$ vanishes on $M \cap \partial \Omega$. The last equality, combined with \eqref{almost_there}, yields the result.
\end{proof}

\section{$W^{2,p}$-minimality}

In this section we prove that a regular  critical set $E$ with positive second variation is a strict local minimizer among sets which are regular and close to $E$ in a strong sense, namely in the $W^{2,p}$-topology, and which satisfy the orthogonality condition \eqref{orthogonality}. This result is stated in Proposition \ref{smooth} and can be interesting in itself. The idea is to take a competitor set $F$ near $E$ and to construct a volume preserving flow $\Phi$ such that   $\Phi(E,1) =F$. We can use then Proposition \ref{2ndVariation}  to estimate the second variation of $E_t = \Phi(E,t)$ at any time $t \in [0,1]$. Notice that by the assumption on $E$ we know that the second derivative of $t \mapsto J(E_t)$ at $t=0$ is strictly positive. We then use the fact that $E_t$ are close to $E$ in $W^{2,p}$ topology to deduce that the function  $t \mapsto J(E_t)$  is in fact strictly convex. 

In the following we say that $J$ has \emph{positive second variation} at the set $E$ if 
\[
\partial^2 J(E)[\varphi] \geq 0 \qquad \text{for every }\, \varphi \in H^1(M) \,\, \text{with }\,  \int_{M}\varphi \, d \Ha^{n-1}= 0.
\]  
where we recall that $\partial^2 J(E)[\varphi]$ is defined by \eqref{quadratic}. We begin with a simple compactness argument whose proof is exactly the same as   \cite[Lemma 3.6]{AFM} and  will  therefore be omitted. 
\begin{lemma} 
\label{easy_lemma}
Suppose that $J$ has positive second variation at the critical set $E$. Then there exists $c_0>0$ such that 
\[
\partial^2 J(E)[\varphi] \geq  c_0 \|\varphi \|_{H^1(M)}^2
\] 
 for every $\varphi \in H^1(M)$ with $\int_{M}\varphi \, d \Ha^{n-1}= 0$. 
\end{lemma}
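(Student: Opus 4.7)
The plan is to argue by contradiction and compactness, in the spirit of \cite[Lemma 3.6]{AFM}. Suppose no such constant $c_0>0$ exists; then I can extract a sequence $\{\varphi_k\}\subset H^1(M)$ with $\int_M \varphi_k\, d\Ha^{n-1}=0$, $\|\varphi_k\|_{H^1(M)}=1$, and $\partial^2 J(E)[\varphi_k]<1/k$. By reflexivity of $H^1(M)$, the Rellich--Kondrachov theorem applied to the compact manifold with boundary $M$, and the compact trace embedding $H^1(M)\hookrightarrow L^2(M\cap\partial\Omega)$, I pass to a subsequence so that $\varphi_k\rightharpoonup\varphi_0$ weakly in $H^1(M)$, strongly in $L^2(M)$, and strongly in $L^2(M\cap\partial\Omega)$ on the trace. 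The zero-mean constraint passes to the limit.

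Next I take the liminf in each of the four terms in \eqref{quadratic}. The Dirichlet term $\int_M |D_\tau\varphi_k|^2\, d\Ha^{n-1}$ is weakly lower semicontinuous. The zeroth-order interior terms $\int_M |B_M|^2\varphi_k^2\, d\Ha^{n-1}$ and $4\gamma\int_M\langle\nabla v_E,\nu_M\rangle\varphi_k^2\, d\Ha^{n-1}$ converge to their $\varphi_0$-counterparts by strong $L^2(M)$ convergence, using that $|B_M|^2$ and $\langle\nabla v_E,\nu_M\rangle$ are bounded on the regular set $E$ (by Proposition \ref{int_regularity} and standard elliptic regularity for $v_E$). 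The boundary term $\int_{M\cap\partial\Omega}B_{\partial\Omega}(\nu_M,\nu_M)\varphi_k^2\, d\Ha^{n-2}$ converges by the compactness of the trace. Finally, the nonlocal bilinear form $\int_M\!\int_M G(x,y)\varphi_k(x)\varphi_k(y)\, d\Ha^{n-1}(x)\, d\Ha^{n-1}(y)$ is continuous on $L^2(M)\times L^2(M)$, because the diagonal singularity $|G(x,y)|\leq C|x-y|^{2-n}$ (with a logarithmic correction in dimension two) is integrable along the $(n-1)$-dimensional surface $M\times M$; hence this term also passes to the limit. Combining these facts I conclude $\partial^2 J(E)[\varphi_0]\leq \liminf_k \partial^2 J(E)[\varphi_k]\leq 0$.

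The positivity hypothesis on $\partial^2 J(E)$ (read, as in Theorem \ref{mainthm}, as strict positivity on nonzero zero-mean functions) then forces $\varphi_0\equiv 0$. Consequently $\varphi_k\to 0$ strongly in $L^2(M)$ and in $L^2(M\cap\partial\Omega)$, so every term in $\partial^2 J(E)[\varphi_k]$ except the Dirichlet one tends to zero. Thus $\int_M |D_\tau\varphi_k|^2\, d\Ha^{n-1}\to 0$, which combined with $\|\varphi_k\|_{L^2(M)}\to 0$ contradicts $\|\varphi_k\|_{H^1(M)}=1$.

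The step I expect to require the most care is the continuity of the nonlocal bilinear form with respect to strong $L^2$ convergence: one has to verify that the Green kernel $G$ on $\Omega$, when restricted to the smooth hypersurface $M\times M$, still defines a bounded operator on $L^2(M)$, so that weak convergence in $H^1(M)$ plus strong convergence in $L^2(M)$ suffices to pass to the limit. Once this is in place the remaining ingredients are standard weak--strong arguments on a compact Riemannian manifold with boundary.
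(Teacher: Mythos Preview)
Your argument is correct and is precisely the standard compactness argument that the paper has in mind: the paper omits the proof entirely, referring to \cite[Lemma~3.6]{AFM}, and what you have written is exactly that contradiction-plus-Rellich scheme adapted to the Neumann setting (with the extra boundary trace term handled by compactness of the trace). Your observation that the hypothesis must be read as strict positivity on nonzero zero-mean functions, as in Theorem~\ref{mainthm}, is also the correct interpretation despite the ``$\geq 0$'' in the paragraph preceding the lemma.
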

We define the $W^{2,p}$ and $C^2$ distance between regular sets $E,F\subset \Omega$ as
\begin{equation*} 
\| E,F \|_{W^{2,p}} := \inf \{ \| \Psi - Id \|_{W^{2,p}(\Omega)} \mid  \Psi : E \to F, \,\, C^{2}-\text{diffeomorphism } \}
\end{equation*}
\begin{equation*}
\| E,F \|_{C^{2}} := \inf \{ \| \Psi - Id \|_{C^{2}(\Omega)} \mid  \Psi : E \to F, \,\, C^{2}-\text{diffeomorphism } \}
\end{equation*}
The main result of the section is the following:
\begin{proposition} 
\label{smooth}
Let $E$ be as in  Theorem \ref{mainthm} and $p>n$. There exist $\delta>0 $ and a constant $c_1 >0$ such that for any $F \subset \Omega$ with $|F|= |E|$, satisfying the orthogonality condition \eqref{orthogonality} and such that $\| E,F \|_{W^{2,p}} \leq \delta$, it holds
\[
J(F) \geq J(E) + c_1|F \triangle E|^2.
\]
\end{proposition}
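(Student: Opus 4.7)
The plan is to connect $E$ to $F$ through a one-parameter family $\{E_t\}_{t\in[0,1]}$ of admissible sets and apply Proposition \ref{2ndVariation} along this family. First I would establish, as a preliminary lemma (the flow construction announced as Lemma~\ref{the_flow} in the introduction), the existence of a $C^2$-flow $\Phi : \overline{\Omega}\times[0,1] \to \overline{\Omega}$, generated by a time-dependent vector field $X_t$ tangent to $\partial\Omega$, such that $\Phi(E,0)=E$, $\Phi(E,1)=F$, each $E_t := \Phi(E,t)$ preserves both the volume $|E_t|=|E|$ and the orthogonality condition \eqref{orthogonality}, and $\sup_t \|E_t, E\|_{W^{2,p}} \leq C\delta$. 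In particular, $X_t$ satisfies \eqref{tangent.cond}, \eqref{volume1} and the identities \eqref{1st_vol} and \eqref{2nd_vol} for every $t$.

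Given such a flow, I would use Taylor's formula with integral remainder,
\[
J(F) - J(E) = \frac{d}{dt}J(E_t)\Big|_{t=0} + \int_0^1 (1-t)\, \frac{d^2}{dt^2} J(E_t)\, dt.
\]
Since $E$ is critical and the flow is admissible, the linear term vanishes. Setting $\varphi_t := \langle X_t, \nu_{M_t}\rangle$, Proposition \ref{2ndVariation} decomposes the integrand as
\[
\frac{d^2}{dt^2}J(E_t) = \partial^2 J(E_t)[\varphi_t] + R_t,
\]
where $R_t$ collects the last two terms of the formula, each of which carries the factor $H_{M_t} + 4\gamma v_{E_t}$. Writing this factor as $\lambda + g_t$, with $\lambda$ the constant value on $M$ given by \eqref{euler_lag}, the $\lambda$-part contributes zero by \eqref{1st_vol} and \eqref{2nd_vol}. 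The embedding $W^{2,p}\hookrightarrow C^{1,\alpha}$ for $p>n$ (controlling $H_{M_t}$) and elliptic regularity applied to \eqref{Laplace} (controlling $v_{E_t}$) yield $\|g_t\|_{\infty} \leq \omega(\delta)$ with $\omega(\delta)\to 0$, hence $|R_t| \leq \omega(\delta)\|\varphi_t\|_{H^1(M_t)}^2$.

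In parallel, transporting $\varphi_t$ back to $M$ via the diffeomorphism inherent in the definition of $\|E_t,E\|_{W^{2,p}}$, Lemma \ref{easy_lemma} together with a perturbation argument for the coefficients appearing in \eqref{quadratic} (second fundamental forms, Green's function, $\nabla v_{E_t}$) gives, for $\delta$ small enough,
\[
\partial^2 J(E_t)[\varphi_t] \geq \tfrac{c_0}{2}\, \|\varphi_t\|_{H^1(M_t)}^2.
\]
Combined with the bound on $R_t$, this yields $\tfrac{d^2}{dt^2}J(E_t) \geq \tfrac{c_0}{4}\|\varphi_t\|_{H^1(M_t)}^2$ uniformly in $t\in[0,1]$. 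To convert this into a bound in $|F\triangle E|^2$, I would use that the symmetric difference is controlled by the swept volume, $|F \triangle E| \leq \int_0^1\int_{M_t}|\varphi_t|\,d\Ha^{n-1}\,dt$, so by Cauchy-Schwarz $|F\triangle E|^2 \leq C\int_0^1 \|\varphi_t\|_{L^2(M_t)}^2\, dt$. Choosing the parametrization of the flow so that $\|\varphi_t\|_{L^2}$ remains essentially constant in $t$ (or simply restricting attention to $t\in[0,1/2]$ where $(1-t)\geq 1/2$), the two estimates combine to give $J(F) - J(E) \geq c_1|F\triangle E|^2$.

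The main obstacle I anticipate is the flow construction itself: one must simultaneously interpolate between $E$ and $F$ in the $W^{2,p}$-topology, preserve orthogonality at $\partial\Omega$ along the way, and enforce the volume constraint at each time. This is precisely the delicate new point with respect to the periodic setting of \cite{AFM}, and deserves a dedicated lemma. Once that is granted, the remaining steps reduce to careful bookkeeping of error terms whose smallness is guaranteed by the $W^{2,p}$-closeness hypothesis.
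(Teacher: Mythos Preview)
Your overall strategy mirrors the paper's: construct an admissible flow from $E$ to $F$, Taylor-expand $J(E_t)$, apply Proposition~\ref{2ndVariation}, and bound the quadratic part via Lemma~\ref{easy_lemma} plus continuity (the paper packages this continuity step as Lemma~\ref{continuity_2nd}). However, your treatment of the remainder $R_t$ contains a genuine gap.

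The claim $\|g_t\|_{\infty} \leq \omega(\delta)$ is incorrect. The $W^{2,p}$-distance controls the diffeomorphism only in $W^{2,p}$, so the embedding $W^{2,p}\hookrightarrow C^{1,\alpha}$ gives $C^{0,\alpha}$ control of $\nu_{M_t}$, but the mean curvature $H_{M_t}=\diver_{\tau}\nu_{M_t}$ involves one more derivative and is only small in $L^p(M_t)$, not in $L^\infty$. The paper therefore estimates $g_t = H_{M_t}+4\gamma v_{E_t}-\lambda$ in $L^p$ and pairs it against $\diver_{\tau_t}(X_{\tau_t}\varphi_t)$ in $L^{p/(p-1)}$ via H\"older. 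This forces a second non-trivial step that you skip: one must show $\|X_{\tau_t}\|_{H^1(M_t)} \leq C\|\varphi_t\|_{H^1(M_t)}$, so that after Sobolev embedding the pairing is indeed bounded by $\omega(\delta)\|\varphi_t\|_{H^1}^2$. This is estimate~\eqref{nasty_estimate} of Lemma~\ref{check_reg}, and it depends on the specific structure of the flow (the vector field is built to be nearly normal to $M_t$, so its tangential part is pointwise dominated by $|\varphi_t|$). Without such an estimate there is no a~priori reason the remainder should be quadratic in $\varphi_t$. Relatedly, the paper arranges $\diver X\equiv 0$ in the flow construction, which kills the second extra term of Proposition~\ref{2ndVariation} outright; your plan to eliminate only its $\lambda$-part via~\eqref{2nd_vol} leaves a residual $\int_{M_t} g_t\,\diver(X)\,\varphi_t$ that again is not obviously controlled by $\omega(\delta)\|\varphi_t\|_{H^1}^2$ unless you build comparable structure into $X$.

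Finally, your conversion to $|F\triangle E|^2$ via ``choosing the parametrization so that $\|\varphi_t\|_{L^2}$ is essentially constant'' adds yet another constraint on the flow. The paper instead proves the pointwise comparability $|\varphi_t(\Phi(x,t))|\simeq|\varphi(x)|$ (estimate~\eqref{easy_est}) as a consequence of the near-normality of $X$, and then uses the single-time bound $|F\triangle E|\leq C\int_M|\varphi|\,d\Ha^{n-1}$ from Lemma~\ref{check_reg}. This is cleaner and avoids piling further requirements onto the already delicate flow construction.
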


We start proving some technical lemmata and then we give the proof of  Proposition \ref{smooth} at the end of the section. A crucial point is theLemma \ref{the_flow} where we construct a vector field $X$ and a flow $\Phi$ which deforms the set $E$ into a given regular set $F$ sufficiently close to $E$. The difficulty lies in the fact that the flow needs to satisfy both the orthogonality and the volume constraint. Therefore we have to construct it carefully near the boundary $\partial \Omega$ in order to preserve the orthogonality condition. 

Let us recall some well know facts on the distance function from a regular set.  Since $\Omega$ is $C^{4, \alpha}$-regular, the distance function $d_{\Omega}(x)= \inf_{y \in \partial \Omega} |x-y|$ is $C^{4, \alpha}$-regular in a neighborhood of $\partial \Omega$. By a neighborhood of $\partial \Omega$ we mean a connected set $V\subset \overline\Omega$ which contains $\partial \Omega$ and it is relatively open with respect to $\overline{\Omega}$. We may define the projection $\Pi: V \to \partial \Omega$ as $\Pi(x)= y_x$, where $y_x \in \partial \Omega$ is the unique point for which $d_{\Omega}(x)= |x- y_x|$. Every point $x \in V$ can therefore be written  as  
\begin{equation}
\label{proj_omega}
x = \Pi(x) - d_{\Omega}(x) \nu_{\Omega}(\Pi(x)),
\end{equation}
where $\nu_{\Omega}(y)$ is the outer normal of $\Omega$ at $y \in \partial \Omega$. We also remark that we may naturally define $D\Pi(y)$ for every $y \in \partial \Omega$ and the Kernel of $D\Pi(y)$ is spanned by $\nu_{\Omega}(y)$
\begin{equation}
\label{spanned}
\text{Ker}\, (D\Pi(y)) = \text{span} \{\nu_{\Omega}(y)\} \qquad y \in \partial \Omega.
\end{equation}
The orthogonality condition \eqref{orthogonality} for a set $E$ is equivalent to the fact that for every $x \in  M \cap \partial \Omega$ the normal vector to $\partial\Omega$ at $x$ belongs to the tangent plane of $M$ at $x$, i.e., 
\begin{equation}
\label{orthogonality2}
\nu_{\Omega}(x) \in \tang_x(M).
\end{equation}

Finally we remark that we may consider $\partial \Omega$ itself  as a $n-1$ dimensional manifold and define a natural distance on $\partial \Omega$ by
\begin{equation} \label{dist_omega}
\dist_{\partial \Omega}(x, y) := \inf\left\{ \int_{0}^1|\xi'(s)|\, ds \, : \, \xi \, \text{ is a path on $\partial \Omega$ such that $\xi(0)= x$ and  $\xi(1)= y$}\right\}. 
\end{equation}

\begin{lemma}
\label{the_flow}
Let $E$ be as in   Theorem \ref{mainthm} and  let $F\subset \Omega$  be a $C^2$-regular set which satisfies the volume constraint $|F|= |E|$ and the orthogonality condition \eqref{orthogonality}. There exists $\delta >0$ such that, if $\| E,F \|_{W^{2,p}} \leq \delta$, with $p >n$, then we can find a $C^2$-regular vector field $X $, with  $\| X\|_{W^{2,p}} \leq C\delta $, that satisfies the tangential condition \eqref{tangent.cond} and such that  the associated flow $\Phi$, defined by \eqref{flow}, has the following properties:
\begin{enumerate}
\item For any $t \in [0,1]$ the set $E_t = \Phi(E, t)$ satisfies the volume constraint $|E_t|= |E|$ and the orthogonality condition \eqref{orthogonality}
\item The flow $\Phi$ takes $E$ to $F$, i.e., $E_1 = \Phi(E,1) = F$.
\end{enumerate}
\end{lemma}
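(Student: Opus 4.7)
The plan is to exploit the $W^{2,p}$-closeness of $F$ to $E$, together with the embedding $W^{2,p}\hookrightarrow C^{1,\alpha}$ for $p>n$, to represent the relative boundary $M_F:=\overline{\partial F\cap\Omega}$ as a controlled perturbation of $M$ and then to realize this perturbation via the flow of a single vector field. I would start by extending the outward normal $\nu_M$ to a $C^{2,\alpha}$ vector field $N$ on a tubular neighborhood $U$ of $M$ in $\overline{\Omega}$, chosen so that $N$ is tangent to $\partial\Omega$ on the whole of $U\cap\partial\Omega$. Such an extension exists because the orthogonality of $E$ guarantees that $\nu_M$ itself is tangent to $\partial\Omega$ along $\partial M:=M\cap\partial\Omega$. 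Using the local flow of $N$ to set up tubular coordinates $(y,s)\in M\times(-\eps_0,\eps_0)$ adapted to $\partial\Omega$, the assumption $\|E,F\|_{W^{2,p}}\le\delta$ produces a height function $h\in W^{2,p}(M)$ with $\|h\|_{W^{2,p}(M)}\le C\delta$ such that $M_F$ is parametrized by $y\mapsto \Psi_N(y,h(y))$, where $\Psi_N$ denotes the flow of $N$. The orthogonality of $F$ transcribes into a compatibility condition of Neumann type for $h$ on $\partial M$.

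Given $h$ I would construct the vector field as
\[
X_0(x)\;:=\;\chi(x)\,\tilde h(x)\,N(x),
\]
where $\chi\in C^\infty_c(U)$ equals one near $M$ and $\tilde h\in W^{2,p}(\Omega)$ is the extension of $h$ defined in the tubular coordinates by $\tilde h(y,s):=h(y)$. By construction $X_0$ satisfies \eqref{tangent.cond}, its normal trace on $M$ coincides with $h$, and $\|X_0\|_{W^{2,p}}\le C\|h\|_{W^{2,p}(M)}\le C'\delta$. The property that $N$ is tangent to $\partial\Omega$ throughout the strip $U\cap\partial\Omega$, and not only on $\partial M$, is crucial: it guarantees both that the flow generated by $X_0$ preserves $\partial\Omega$ and that the derivative of the flow on $\partial\Omega$ sends normal directions to normal directions, which in turn propagates the orthogonality condition from $\partial M$ to $\Phi(E,t)\cap\partial\Omega$ for every $t\in[0,1]$. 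The volume constraint $|F|=|E|$ and a Taylor expansion yield $\int_M h\,d\Ha^{n-1}=O(\delta^2)$, so the discrepancy $|E_t|-|E|$ is of order $\delta^2$; I would eliminate it by adding a small $W^{2,p}$ correction $Y$ with $\|Y\|_{W^{2,p}}=O(\delta^2)$, taken tangential to $\partial\Omega$ and supported away from $\partial M$, so that the corrected field $X=X_0+Y$ generates a flow satisfying $|E_t|=|E|$ for every $t\in[0,1]$.

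The time-one map of the flow of $X$ sends $E$ to a set $F_1$ that is $W^{2,p}$-close to $F$ with error of order $\delta^2$; the exact identity $\Phi(E,1)=F$ would then be recovered by a contraction argument in the ball $\{h\in W^{2,p}(M):\|h\|_{W^{2,p}(M)}\le C\delta\}$, iterating the previous construction with $F_1$ in place of the current target and summing the geometrically decaying corrections. The main obstacle I foresee is the simultaneous enforcement of the three geometric constraints --- tangentiality to $\partial\Omega$, volume conservation, and preservation of the orthogonality of $M_t$ and $\partial\Omega$ --- throughout the whole flow, rather than at the initial time alone. In particular, keeping the orthogonality for every $t\in[0,1]$ forces the extension $N$ of $\nu_M$ near $\partial M$ to be engineered so that the resulting flow preserves the foliation of a neighborhood of $\partial\Omega$ by the level sets of the signed distance to $\partial\Omega$; a naive extension of $\nu_M$ would generically destroy tangentiality to $\partial\Omega$ even infinitesimally away from $\partial M$, and reconciling this with the volume and endpoint constraints is the core technical difficulty.
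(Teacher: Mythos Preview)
Your plan identifies the right difficulty --- preserving the orthogonality of $M_t$ with $\partial\Omega$ for \emph{all} $t$ --- but the mechanism you propose in the second paragraph does not actually deliver it. Tangentiality of $N$ to $\partial\Omega$ on the whole strip $U\cap\partial\Omega$ guarantees only that the flow of $N$ preserves $\partial\Omega$ as a set; it does \emph{not} imply that $D\Phi_N(x,t)$ sends $\nu_\Omega(x)$ to a multiple of $\nu_\Omega(\Phi_N(x,t))$. For that you need exactly what you recognize in your last paragraph: the flow of $N$ must commute with the projection $\Pi$ onto $\partial\Omega$, i.e.\ preserve the level sets of $d_\Omega$. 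This is a genuine structural condition on $N$, not a consequence of tangentiality, and you give no construction of such an $N$. Moreover, even once $N$ has this ``projection property'', the field you actually flow by is $X_0=\tilde h\,N$, and preserving orthogonality additionally requires $\langle\nabla\tilde h,\nu_\Omega\rangle=0$ on $\partial\Omega$; since $\tilde h$ is constant along $N$-trajectories (which are \emph{tangent} to $\partial\Omega$) rather than along $\nu_\Omega$, this is not automatic and must be extracted from the orthogonality of $F$.

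The paper's route is structurally different from your correction-and-iterate scheme and sidesteps these issues. It first builds the trajectory field: near $\partial\Omega$ it takes $Z=\nabla_\tau d_{M\cap\partial\Omega}$ on $\partial\Omega$ and extends it inward by flattening $\partial\Omega$ and solving a first-order linear PDE so that the projection property $\Pi\circ\Phi_Z=\Phi_Z\circ\Pi$ holds exactly; away from $\partial\Omega$ it uses $\nabla d_M$, and a cutoff glues the two. A scalar multiplier $\eta$ is then chosen so that the rescaled field $Y=\eta Y'$ is divergence-free. Finally, the actual vector field is $X=T_F\,Y$, where $T_F(y)$ is the travel time along the $Y$-trajectory through $y$ from $M$ to $M_F$. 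This single construction hits all three constraints at once: $\Phi(E,1)=F$ holds by the definition of $T_F$ (no iteration needed); $\diver X=0$ because $T_F$ is constant along $Y$-trajectories and $\diver Y=0$, so $t\mapsto|E_t|$ is affine, hence constant (no volume correction needed); and the orthogonality of each $E_t$ follows from the projection property of $Z$ together with the orthogonality of $F$, via the verification that $\langle\nabla T_F,\nu_\Omega\rangle=0$ on $\partial\Omega$. Your perturbative scheme would have to carry all three constraints through each iteration, and there is no obvious reason the volume correction $Y$ and the successive height adjustments would not spoil the carefully arranged orthogonality near $\partial M$.
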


\begin{proof}
The proof is rather technical and long and it is therefore divided into three steps. 

\textbf{Step 1:} We begin by constructing a vector field, which gives us the trajectories of the final flow. It will be enough to construct the vector field  in a neighbourhood of the relative boundary $M = \overline{\partial E \cap \Omega}$, which we denote by $U$. By a neighbourhood of $M$ we mean a connected set $U$ which contains $M$ and is relatively open with respect to $\overline{\Omega}$.

First of all, by Proposition \ref{int_regularity}, $M$ is $C^{4, \alpha}$ regular and therefore the signed  distance function from $M$  in $\Omega$
\[
d_M(x):= \left\{
\begin{aligned}
 &\inf_{y \in M} |x-y| &&\text{for } x \in \Omega \setminus E \\
 - &\inf_{y \in M} |x-y| &&\text{for } x \in \Omega \cap E\\
\end{aligned}
\right.
\]
is in  $C^{4, \alpha}(U)$. The gradient field $\nabla d_M$ will define the trajectories away from the boundary of $\Omega$.

As we already pointed out, we have to construct the trajectories carefully near the boundary $\partial \Omega$ in order to be able to verify the orthogonality condition in the forthcoming step. We begin by noting that the $n-2$ dimensional boundary $M \cap \partial \Omega$ is $C^{4, \alpha}$ regular. The signed distance function of  $M \cap \partial \Omega$ on $ \partial \Omega$
\[
d_{M\cap \partial \Omega}(x):= \left\{
\begin{aligned}
 &\inf_{y \in M \cap \partial \Omega} \dist_{\partial \Omega}(x,y) &&\text{for } x \in \partial \Omega \setminus E \\
 - &\inf_{y \in M \cap \partial \Omega}  \dist_{\partial \Omega}(x,y) &&\text{for } x \in \partial \Omega \cap E\\
\end{aligned}
\right.
\]
is therefore $C^{4, \alpha}$ regular near $M\cap \partial \Omega$. Here  $\dist_{\partial \Omega}(x,y)$ is defined in \eqref{dist_omega}. We will define the vector field $Z$ on the boundary $\partial \Omega$ as
\begin{equation} \label{def_Z_b}
Z(x) = \nabla_{\tau} d_{M\cap \partial \Omega}(x) \qquad \text{for }\, x \in \partial \Omega \cap U.
\end{equation}
Here   $\nabla_{\tau} $ denotes the tangential gradient on   $\partial \Omega$. We will extend $Z$ to a neighbourhood of $\partial \Omega \cap U$, which we will denote by $V$, such that $Z \in C^{2}(V, \R^n)$. The trajectories of the final flow  are then given by the vector field
\begin{equation}\label{Y'}
Y'(x)= \zeta(x)  \nabla d_M(x) + (1- \zeta(x)) Z(x), 
\end{equation}
where $\zeta \in C_0^{\infty}(U)$ is a cut-off function such that $\zeta \equiv 1$ outside $V$. Notice that then $Y' \in C^{2}(U, \R^n)$ and \eqref{def_Z_b} implies that $Y'$ satisfies the tangent condition \eqref{tangent.cond}. Since $Y'$ defines the trajectories for the final flow $X$, it will then also satisfy \eqref{tangent.cond}.

We will extend $Z$, defined on $\partial \Omega$ as \eqref{def_Z_b},  to $V$ such that it satisfies 
\begin{equation} \label{Z_div=0}
\diver Z = 0 \qquad \text{in }\, V
\end{equation}
and it has a property which we call ''the projection property''. To describe this, let us assume that we have extended $Z$ to $V$, and let $\Phi_Z$ be the associated flow 
\[
\frac{\partial}{\partial t} \Phi_Z(x,t) = Z( \Phi_Z(x,t)) \quad \text{with }\,\Phi_Z(x, 0) =x 
\]
defined in $V$. Let $\Pi$ be the projection in $V$ onto $\partial \Omega$ defined prior to \eqref{proj_omega}. We construct $Z$ such that the flow $\Phi_Z$ satisfies 
\begin{equation} \label{projection_Z}
\Pi\left( \Phi_Z(x, t) \right) =  \Phi_Z( \Pi(x), t)
\end{equation}
for every $x \in V$ and $t \in \R$ for which $\Phi_Z(x, t) \in V$. Roughly speaking \eqref{projection_Z} means that any point $y \in V$ will travel  side-by-side with its projection point $\Pi(y)$. In other word, for every $x \in \partial \Omega$ and $h >0$ it holds
\[
 \Phi_Z( x- h \nu_{\Omega}(x), t)  = \Phi_Z(x, t) -  l_h \nu_{\Omega}( \Phi_Z(x, t) )
\]
for some $l_h >0$. We call this the projection property of $Z$ (cf. Figure \ref{fig-01}).

  \begin{figure} 
  \label{fig-01}
   \centering
   \def\svgwidth{9cm} 
   \vspace{-2cm}
   \small{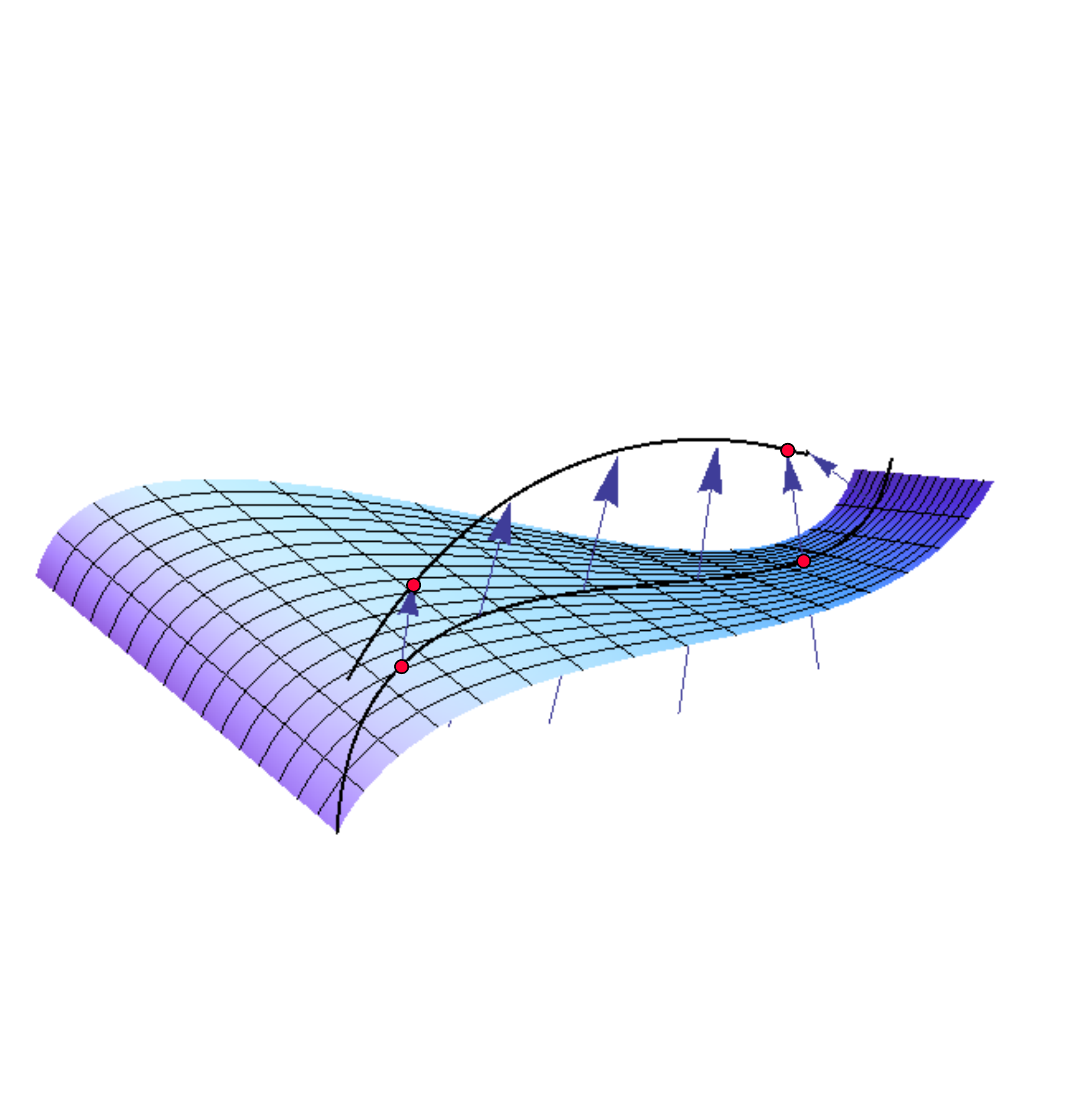}
   \vspace{-2cm}
   \caption{The projection property on a trajectory}\label{fig-01}
 \end{figure}
 
First we remark that if $Z$ is an extension of  \eqref{def_Z_b} and satisfies \eqref{Z_div=0} and \eqref{projection_Z}, then it will be uniquely defined and  we may construct it  locally. We first make sure that $Z$  will satisfy \eqref{projection_Z}. In order to do this, it is convenient to flatten the boundary $\partial \Omega$. 

 Let us  fix $x_0 \in M \cap \partial \Omega$, which we may assume to be the origin. We write  $x \in \R^n$ as $x = (x',x_n)$ where  $x' = (x_1, \dots, x_{n-1}) \in \R^{n-1}$ and denote the projection $\tilde{\Pi}: \R^n \to \R^n$ onto the plane $\{x_n = 0 \}$ by   
\[
\tilde{\Pi}(x)= (x', 0). 
\] 
We may locally write, up to a translation and rotation, $\Omega = \{ x \mid x_n > g(x')\}$ for a $C^{4,\alpha}$ function $g : D \subset \R^{n-1} \to \R$ with $g(0)=0$ and $\nabla g(0)= 0$.  We define a $C^{4,\alpha}$-diffeomorphism $\Psi$ in a neighbourhood of  $x_0$, say $V' \subset V$, which flattens the boundary. Since we may write  every point $x \in \partial \Omega$ as $x = (x', g(x'))$, we define the diffeomorphism on the boundary as  $\Psi(x)= \tilde{\Pi}(x)$. By \eqref{proj_omega}, every $x \in V'$ can be expressed as $x=  \Pi(x) - d_{\Omega}(x) \nu_{\Omega}(\Pi(x))$. The diffeomorphism $\Psi: V' \to \Psi(V')$ is then defined as 
\[
\Psi(x)= \tilde{\Pi}(\Pi(x)) + d_{\Omega}(x)e_n.
\] 
Notice that $e_n$ is the inner normal of the new domain $\Psi(V')$ on the flat boundary. The map $\Psi$ has the nice property that for every $x \in V'$ and every $y \in \Psi(V')$ it holds
\begin{equation} \label{proj_diff}
\tilde{\Pi}(\Psi(x)) = \Psi(\Pi(x)) \qquad \text{and} \qquad \Pi(\Psi^{-1}(y)) = \Psi^{-1}(\tilde{\Pi}(y)).
\end{equation}

Notice that the flow $\Phi_Z$ is already defined on $\partial \Omega$ by \eqref{def_Z_b}. We map this flow to the new coordinates by  
\[
\tilde{\Phi}_Z(y,t) :=\Psi(\Phi_Z(\Psi^{-1}(y),t))
\]
and denote the associated vector field by $\tilde{Z}$. We may write $\tilde{Z}$ explicitly as
\begin{equation} \label{tildeZ}
\tilde{Z}(y)= D\Psi(\Psi^{-1}(y)) Z(\Psi^{-1}(y))
\end{equation}
on the flat boundary of $\Psi(V')$. Since $\tilde{\Phi}_Z$ does not exit the flat boundary $\partial (\Psi(V')) \subset \{  y_n= 0\}$,  its vertical velocity is zero, i.e.,  $\langle \tilde{Z}, e_n \rangle = 0$. In other words, $\tilde{\Pi}(\tilde{Z})= \tilde{Z}$, where $\tilde{\Pi}$ is the projection onto  the plane $\{  y_n= 0\}$. We extend $\tilde{Z}$ to $\Psi(V')$ as 
\begin{equation}
\label{extension_Z_1}
\tilde{Z}(y):= \tilde{Z}(\tilde{\Pi}(y)) + f(y) e_n \qquad \text{for }\, y \in \Psi(V'),
\end{equation}
where $f \in C^{2}(\Psi(V')) $ is some function with boundary values $f(y)=0$  on $y \in \{  y_n= 0\}$, which will be chosen later. 

Let us  denote, with a slight abuse of notation, the associated flow by $\tilde{\Phi}_Z$. The construction implies that
\[
\tilde{\Pi}(\tilde{Z}(y)) = \tilde{Z}(\tilde{\Pi}(y)) \qquad \text{for }\, y \in \Psi(V').
\]
This means that at every point $y \in \Psi(V')$ the flow has the same horizontal velocity as at its projection point $\tilde{\Pi}(y)$. Therefore $\tilde{\Phi}_Z$ satisfies
\begin{equation}
\label{project_Psi}
\tilde{\Pi}\left( \tilde{\Phi}_Z(y, t) \right) =  \tilde{\Phi}_Z( \tilde{\Pi}(y), t).
\end{equation}

We map the flow $\tilde{\Phi}_Z$ back to $V'$ by 
\begin{equation} 
\label{defin_Phi_Z}
\Phi_Z(x,t) := \Psi^{-1}(\tilde{\Phi}_Z(\Psi(x),t)),
\end{equation}
for every $x \in V'$ and $t \in \R$ for which $\Phi_Z(x,t) \in V'$. It follows from \eqref{proj_diff}, \eqref{project_Psi} and \eqref{defin_Phi_Z} that
\[
\begin{split}
\Pi\left( \Phi_Z(x, t) \right)  &=  \Pi\left(\Psi^{-1}(\tilde{\Phi}_Z(\Psi(x),t))  \right)\\
&=  \Psi^{-1}\left(\tilde{\Pi}(\tilde{\Phi}_Z(\Psi(x),t)) \right)\\
&= \Psi^{-1}\left( \tilde{\Phi}_Z( \tilde{\Pi}(\Psi(x)), t) \right)\\
&=  \Psi^{-1}\left( \tilde{\Phi}_Z( \Psi(\Pi(x)), t) \right)\\
&= \Phi_Z\left(\Pi(x),t\right).
\end{split}
\]
Hence, we have the projection property \eqref{projection_Z}.

Let $Z$ be the vector field  associated  to $\Phi_Z$. We may use \eqref{tildeZ},  \eqref{extension_Z_1}, \eqref{proj_diff} and \eqref{defin_Phi_Z} in order to write $Z$ explicitly as
\begin{equation} \label{Z_explicit}
\begin{split}
 Z(x) &= D\Psi^{-1}(\Psi(x)) \tilde{Z}(\Psi(x)) \\
&=  D\Psi^{-1}(\Psi(x)) \left( \tilde{Z}(\tilde{\Pi}(\Psi(x))) + f(\Psi(x)) e_n \right) \\
&=   D\Psi^{-1}(\Psi(x)) \left( \tilde{Z}(\Psi(\Pi(x))) + f(\Psi(x)) e_n \right)\\
&=   D\Psi^{-1}(\Psi(x)) D\Psi( \Pi(x)) Z(\Pi(x)) + f(\Psi(x))    D\Psi^{-1}(\Psi(x)) e_n .
\end{split}
\end{equation}
We remark that, since we impose the boundary condition $f(y)= 0$  for $y_n = 0$, we have  $f(\Psi(\Pi(x))) = f(\tilde{\Pi}(\Psi(x))= 0$. Therefore it follows from \eqref{Z_explicit} that $Z$ really is an extension of the vector field we defined on the boundary  $\partial \Omega$ in \eqref{def_Z_b}.

Finally we choose $f$, which defines the vertical velocity of $\tilde{Z}$ in \eqref{extension_Z_1}, such that $Z$ satisfies \eqref{Z_div=0}.  Let us write the equation  $\diver Z = 0$ in a form
\[
\langle  {\bf{a}}(x), \nabla f(\Psi(x)) \rangle + b(x)f(\Psi(x))  + c(x)= 0 \qquad x \in V',
\]
where, by \eqref{Z_explicit}, the coefficients are
\[
\begin{aligned}
{\bf{a}}(x) &=  D \Psi(x) D\Psi^{-1}(\Psi(x)) e_n= e_n, \\
b(x) &= \diver\left(    D\Psi^{-1}(\Psi(x)) e_n \right) ,\\
c(x) &= \diver\left( D\Psi^{-1}(\Psi(x)) D\Psi( \Pi(x)) Z(\Pi(x)) \right).\\
\end{aligned}
\]
We remark that since $\Psi$ is a $C^{4,\alpha}$-diffeomophism and $x \mapsto Z(\Pi(x))$ is $C^{3,\alpha}$ regular  (recall  \eqref{def_Z_b}) then the above coefficients are $C^{2,\alpha}$ regular. Going back to the domain $\Psi(V')$ with a flat boundary, we have to solve the PDE
\begin{equation}
\label{thePDE_Z}
\langle \nabla f(y), e_n \rangle + \tilde{b}(y)f(y)  + \tilde{c}(y)= 0 \qquad y \in \Psi(V'),
\end{equation}
with the boundary condition $f(y)= 0$ on $ \Psi(V')\cap \{ y_n = 0\}$. Here  $\tilde{b}(y) = b(\Psi^{-1}(y))$ and  $\tilde{c}(y) = c(\Psi^{-1}(y))$. We may solve this equation by using  the method of characteristics and thus we have $Z$, which is $C^{2}$ regular.

We define the primitive vector field $Y'$ by \eqref{Y'}. For the distance function of $M$ it holds  $\nabla d_M(x)= \nu_M(x)$ for $x \in M$. Moreover, since $M$ satisfies the orthogonality condition \eqref{orthogonality}, \eqref{def_Z_b}  implies that for $x \in M \cap \partial \Omega$ it holds $Z(x) = \nabla_{\tau} d_{M\cap \partial \Omega}(x) = \nu_M(x)$. By the regularity of $Z$ and $M$ we conclude that we have 
\begin{equation}\label{reg_Y'}
| Y'(x) - \nu_M(x) | \leq \frac{1}{16} \qquad x \in  M,
\end{equation}
by  choosing the cut-off function  such that $\zeta \neq 1$  only in a narrow  neighbourhood of $\partial \Omega$.

\bigskip

\textbf{Step 2:} In this short step we change $Y'$ to $Y \in C^{2}(U, \R^n)$, which has the same trajectories and satisfies
\begin{equation}\label{div_free_Y}
\diver(Y(x)) = 0 \quad x \in U \qquad \text{and} \qquad Y(x)= Y'(x) \quad \text{on }\, M,
\end{equation}
where $U$ is a neighbourhood of $M$. We impose that $Y$ is of the form $Y(x)= \eta(x)Y'(x)$, where $\eta$ is a function. In order to have $\diver Y = 0$ and $Y(x)= Y'(x)$ on $M$, the function $\eta$ has to satisfy 
\begin{equation}\label{div_free}
\langle Y'(x), \nabla \eta(x) \rangle + \eta(x)\diver Y'(x) = 0 \qquad x \in  U
\end{equation}
with the boundary condition $\eta(x) = 1$ for $x \in M$.  By \eqref{Y'}, $Y'$ is of the form $Y'= (1- \zeta)Z + \zeta \nabla d_M$, where $Z$ is  $C^{2}$-regular and   $\nabla d_M $ and $\zeta$ are  $C^{3}$-regular. However, since $\diver Z = 0$, the equation \eqref{div_free} is $C^2$-regular and we may solve it by using the method of characteristics. Moreover, since $Y' = Z$ and $\diver Z = 0$ in a neighbourhood of $\partial \Omega$, say $V$, we have that 
\begin{equation}\label{near_bdry}
Y(x) = Z(x) \qquad x \in  V.
\end{equation}

We denote the flow associated to $Y$ by $\Phi_Y$. It is clear that $\Phi_Y$ is defined in $U$ for a short  time interval $(-t_0, t_0)$. Moreover, since  $Y \in C^{2}(U, \R^n)$, $\Phi_Y$ is also $C^{2}$-regular. Moreover, from  \eqref{reg_Y'} and  \eqref{div_free_Y} it follows that
\begin{equation}\label{reg_Y}
| Y(x) - \nu_M(x) | \leq \frac{1}{16}\qquad x \in  M.
\end{equation}
We may also assume 
\begin{equation}\label{bound_Y}
\| Y\|_{L^{\infty}(U)} \leq 2
\end{equation}
by possibly choosing a smaller neighbourhood $U$.

 \bigskip

\textbf{Step 3:} We finally construct the vector field $X$ such that the associated flow, denoted by $\Phi$, satisfies $\Phi(E, 1) = F$, and that for every $t \in [0,1]$ the set $E_t:= \Phi(E, t)$ satisfies the orthogonality condition \eqref{orthogonality} and the volume constraint $|E_t|= |E|$. 

In a neighbourhood $U$  of $M$ the map $\Phi_Y |_{M\times (-t_0, t_0)} \to U$ is  a $C^{2}$-diffeomorphism and every point $y \in U$ can be written uniquely  as $y = \Phi_Y(x, t)$ for some $x \in M$ and for some  $t \in (-t_0, t_0)$. We may therefore define implicitly the projections $\pi: U  \to M$  and $T :U \to (-t_0, t_0)$ such that 
\begin{equation} \label{definition_T_pi}
\Phi_Y(\pi(y), T(y))= y.
\end{equation}
Since  $\Phi_Y$ is $C^{2}$ regular, also $\pi$ and $T$ are  $C^{2}$ regular.

By the assumption on $F$, there is a $C^2$-diffeomorphism $\Psi_F: E \to F$ with $\|\Psi_F- Id\|_{W^{2,p}} < \delta$. Without further mention we always assume $\delta >0$ to be small.  We define the map $S: M \to M$ as 
\[
S(x) := \pi(\Psi_F(x)).
\]
The tangential differential of $S$ is
\[
D_{\tau}S(x) = D_{\tau} \pi(\Psi_F(x)) \, D_{\tau}\Psi_F(x) \qquad \text{on }\, x \in M.
\]
From the regularity of  $\pi$ and from the fact that $\pi(x)=x$ for $x \in M$ we conclude that $|D_{\tau} \pi(\Psi_F(x))|\geq c$. Moreover, since $\|D_{\tau}\Psi_F(x)- I\|_{L^{\infty}} \leq \delta$ we  conclude that $S$ is a $C^2$-diffeomorphism and the bound $\|\Psi_F\|_{W^{2,p}} \leq C$ implies
\begin{equation} \label{regularity_S}
\|S^{-1}\|_{W^{2,p}(M)} \leq C. 
\end{equation}

We define further a projection onto the relative boundary of $F$, $M_F := \overline{\partial F \cap \Omega}$, $\pi_F:U \to M_F$ as
\[
\pi_F(y) := \Psi_F(S^{-1}(\pi(y))),
\]
which labels for every point $y \in U$ a unique point $z$ on $M_F$ on the same trajectory. Trivially the map $\pi_F$ is constant along the trajectories of $\Phi_Y$ and the value 
\begin{equation}
\label{def_T_F}
T_F(y):= T (\pi_F(y))
\end{equation}
denotes the time needed from $M$ to $M_F$ along the trajectory passing through $y$. The definition of $\pi_F$ and the estimates  $\|\Psi_F- Id\|_{W^{2,p}} < \delta$ and \eqref{regularity_S} imply 
\[
\|\pi_F-  S^{-1} \circ \pi \|_{W^{2,p}} < C \delta.
\] 
Since $S^{-1}(\pi(y)) \in M$ for every $y \in U$, it holds, by the definition of $T$, that $T(S^{-1}(\pi(y))) =0$ for $y \in U$. Therefore the above estimate yields
\begin{equation}
\label{regularity_T_F}
 \|T_F\|_{W^{2,p}} = \|  T \circ \pi_F -  T \circ S^{-1} \circ \pi\|_{W^{2,p}} \leq C \delta.
\end{equation}

We define the vector field $X$ by
\begin{equation} \label{def_X}
X(y) = T_F(y) Y(y).
\end{equation}
 Since $Y$ is $C^2$ regular, the estimate \eqref{regularity_T_F} yields $ \|X\|_{W^{2,p}} \leq C \delta$.  We denote  the flow  associated to $X$  by $\Phi$. We may write $\Phi$ explicitly as
\begin{equation} 
\label{def_Phi_Y}
\Phi(x, t)= \Phi_Y(x, T_F(x) \, t) \qquad \text{for }\, x \in M.
\end{equation}
We denote  $E_t= \Phi(E, t)$ and  $M_{t} = \overline{\partial E_{t} \cap \Omega}$. Since $T_F(x)$ is the time that the flow $\Phi_Y$ needs to get  from $x\in M$ to $M_F$, it follows directly from  \eqref{def_Phi_Y}  that $ M_F = \{ \Phi(x, 1) \mid x \in M\}$. Hence $E_1= F$. 

Since $T_F$ is constant along the trajectories of $\Phi_Y$, \eqref{div_free_Y} implies 
\begin{equation} 
\label{def_Phi_X}
\diver X = \diver (T_F Y)= \langle \nabla T_F, Y \rangle + T_F \diver Y = 0.
\end{equation}
We may  calculate as in  \eqref{2nd_vol} 
\[
\frac{d^2}{dt^2} |E_t| = \int_{M_t} \diver (X)  \langle X, \nu_M \rangle \, d \Ha^{n-1} = 0
\]
by \eqref{def_Phi_X}. Therefore $t \mapsto |E_t|$ is an affine function. Since $|E| = |F|$, it has to be constant. Hence, the flow  satifies  the volume constraint $|E_t|= |E|$ for $t \in [0,1]$.

We have yet to make sure that at any time $t \in (0,1)$ the set $E_t$  satisfies the orthogonality condition. Before that we remark that \eqref{near_bdry} implies that $\Phi_Y = \Phi_Z$ in a neighbourhood of $\partial \Omega$ which we denote by $V$. We show first that for every $z \in \partial \Omega$ and every $s \in (-t_0, t_0)$ it holds 
\begin{equation} \label{normal_evolv2}
 D\Phi_Z(z, s) \nu_{\Omega}(z) = \lambda\nu_{\Omega}(\Phi_Z(z, s)) 
\end{equation}
for some $\lambda = \lambda(z,s)  >0$.

Indeed, the projection property \eqref{projection_Z} yields
\[
\Pi\left( \Phi_Z(z - h \nu_{\Omega}(z), s) \right) = \Phi_Z\left(\Pi(z - h \nu_{\Omega}(z)), s\right) =  \Phi_Z(z,s)
\]
for every small $h>0$. This implies
\[
0 = \lim_{h \to 0}\frac{1}{h} \left(\Pi\left( \Phi_Z(z - h \nu_{\Omega}(z), s) \right) - \Phi_Z(z,s) \right) =- D\Pi (\Phi_Z(z,s)) D\Phi_Z(z,s) \nu_{\Omega}(z).
\]
In other words, $ D\Phi_Z(z, s) \nu_{\Omega}(z) \in \text{Ker}( D\Pi (\Phi_Z(z,s)))$, which implies \eqref{normal_evolv2} by \eqref{spanned}.

Let us now prove the orthogonality condition. We fix $t\in (0,1)$ and   $y  \in M_{t} \cap \partial \Omega$. We will show that 
 \begin{equation} \label{normal_evolv}
\nu_{\Omega}(y) \in \tang_{y}(M_{t}),
\end{equation}
which  implies the orthogonality as we noted in \eqref{orthogonality2}. Since $y  \in M_{t} \cap \partial \Omega$, there exists a unique $x \in M \cap \partial \Omega$ such that $y = \Phi(x , t) = \Phi_Z(x, T_F(x)t)$.

To show \eqref{normal_evolv}, we notice that since $M$ satisfies the orthogonality condition \eqref{orthogonality}, it holds $ \nu_{\Omega}(x) \in \tang_x(M)$ by \eqref{orthogonality2}. Therefore, since $\Phi(\cdot, t): M \to M_t$ is a diffeomorphims, we have 
\begin{equation} \label{tang_trivial}
 D\Phi(x, t) \nu_{\Omega}(x)   \in \tang_y(M_t).
\end{equation}
We use \eqref{def_Phi_Y} to calculate
\[
 D\Phi(x, t) \nu_{\Omega}(x)  = D\Phi_Z(x, T_F(x)t) \nu_{\Omega}(x) + t \frac{ \partial \Phi_Z}{\partial t} (x, T_F(x)t) \langle \nabla T_F(x),   \nu_{\Omega}(x)\rangle.
\]
Notice that \eqref{normal_evolv2} implies 
\[
 D\Phi_Z(x, T_F(x)t) \nu_{\Omega}(x)  = \lambda \nu_{\Omega}(y) 
\]
for some $\lambda >0$.  Hence, by  \eqref{tang_trivial} we need yet to to show 
\begin{equation} \label{ort_neu}
\langle \nabla T_F(x),   \nu_{\Omega}(x)\rangle = 0 
\end{equation}
to conclude \eqref{normal_evolv}.

We recall that $T_F(x)= T(\pi_F(x))$, where $\pi_F$ is a map which labels for every $x' \in U$ a unique point $y' \in M_F$ on the same trajectory. We may therefore write $\pi_F(x) = \Phi_Z(x, s)$ and $\pi_F(x- h \nu_{\Omega}(x)) = \Phi_Z(x- h \nu_{\Omega}(x) ,s_h)$ for some $s, s_h \in   (-t_0, t_0)$. We also denote $\tilde{y}= \pi_F(x)$ and 
\[
\lim_{h \to 0} \frac{s_h - s}{h} = \mu.
\] 
We  have
\begin{equation} \label{ugly_calc}
\begin{split}
D\pi_F(x) \nu_{\Omega}(x) &= \lim_{h \to 0} \frac{1}{h} \left( \pi_F(x) - \pi_F(x- h \nu_{\Omega(x)})  \right) \\
&=  \lim_{h \to 0} \frac{1}{h} \left( \Phi_Z(x,s) - \Phi_Z(x-  h \nu_{\Omega}(x) ,s_h) \right) \\
&=  D\Phi_Z(x, s) \nu_{\Omega}(x) - \mu \frac{\partial \Phi_Z}{\partial t}(x,s)\\
&= \tilde{\lambda} \nu_{\Omega}(\tilde{y}) - \mu Z(\tilde{y}), 
\end{split}
\end{equation}
where the last equality follows from \eqref{normal_evolv2} and $\frac{\partial \Phi_Z}{\partial t}(x,s) = Z(\Phi_Z(x,s) )$. Since $\pi_F(x) \in M_F$, we have that $D\pi_F(x) \nu_{\Omega}(x) \in \tang_{\tilde{y}}(M_F)$. Moreover, since $M_F$ satisfies the orthogonality condition \eqref{orthogonality2}, it holds 
\[
 \nu_{\Omega}(\tilde{y}) \in \tang_{\tilde{y}}(M_F).
\]
However, since $F$ is close to $E$ in $W^{2,p}$-sense, we have the estimate $|\nu_M(x) - \nu_{M_F}(\pi_F(x))| \leq C\delta $  (see \eqref{almost_id} in Lemma \ref{check_reg}). Since $Z = Y$ in $V$,  the estimate \eqref{reg_Y} and the $C^2$-regularity of $Z$ then imply 
\[
\langle  Z(\tilde{y}), \nu_{M_F}(\tilde{y})\rangle >0,
\] 
when $\delta$ is small. In particular, $ Z(\tilde{y}) \notin \tang_{\tilde{y}}(M_F)$. Therefore, by \eqref{ugly_calc}, we must have $\mu = 0$, which in turns implies 
\begin{equation} \label{nice_est}
D\pi_F(x) \nu_{\Omega}(x) =  \tilde{\lambda}  \nu_{\Omega}(\tilde{y})
\end{equation}
for some $\tilde{\lambda} >0$.

Since $T_F= T \circ \pi_F$, \eqref{ort_neu} can be written as
\[
\langle \nabla T(\pi_F(x)),  D \pi_F(x) \nu_{\Omega}(x)\rangle = 0 .
\]
By \eqref{nice_est} this is equivalent to
\begin{equation} \label{only_to_show}
\langle \nabla T(\tilde{y}),  \nu_{\Omega}(\tilde{y}) \rangle = 0,
\end{equation}
for $\tilde{y} = \pi_F(x)$. 

We recall that the definition of $T :U \to \R$  in \eqref{definition_T_pi} implies $T(\Phi_Y(x', t')) = t'$ for every $x' \in U$. Since $\Phi_Y = \Phi_Z$ in $V$, we have that
\[
T(\Phi_Z(x', s)) = s \qquad \text{for every }\, x' \in M \cap V.
\] 
We recall that $\tilde{y} = \Phi_Z(x, s)$ for $x \in M \cap \partial \Omega$. Therefore the above equality implies
\[
\langle \nabla T(\tilde{y}), D\Phi_Z(x, s) \tau \rangle = 0 \qquad \text{for every }\, \tau \in \tang_x(M).
\]
 Since $M$ satisfies the ortogonality condition, it holds $\nu_{\Omega}(x) \in \tang_x(M)$. Hence,
\[
\langle \nabla T(\tilde{y}), D\Phi_Z(x, s) \nu_{\Omega}(x)  \rangle = 0.
\]
We then conclude from  \eqref{normal_evolv2} that
\[
\langle \nabla T(\tilde{y}), \nu_{\Omega}(\tilde{y})  \rangle = 0.
\]
This proves \eqref{only_to_show} and shows that $M_t$ satisfies the orthogonality condition \eqref{orthogonality}, and the lemma is finally proved.
\end{proof}

In the next lemma we study the regularity properties of the flow $\Phi$ and the vector field $X$ constructed in the previous lemma. 

\begin{lemma}
\label{check_reg}
Let $E$, $F$ be as in Lemma \ref{the_flow}, let $X$ be the vector field defined by \eqref{def_X}, $\Phi$ the associated flow and 
set $M_t := \overline{\partial E_t \cap \Omega}$. The flow $\Phi$ satisfies
\begin{equation} \label{almost_id}
\| \Phi(\cdot, t)- Id\|_{W^{2,p}(M)}  \leq C \delta,
\end{equation}
for some constant $C>0$ and $\delta$ given by Lemma \ref{the_flow}.
The vector field $X$ satisfies 
\begin{equation} \label{symm_diff}
 |F \triangle E| \leq C \int_M |\langle X, \nu_M \rangle| \, d \Ha^{n-1} 
\end{equation} 
and
\begin{equation} \label{nasty_estimate}
  \|X_{\tau_{t}}\|_{H^1(M_t)} \leq C   \| \langle X, \nu_{M_t} \rangle\|_{H^1(M_t)}
\end{equation} 
for some constant $C$, where $\nu_{M_t}$ is the normal vector of $M_t$ and $X_{\tau_{t}} = X - \langle X, \nu_{M_t} \rangle \nu_{M_t}$. 
\end{lemma}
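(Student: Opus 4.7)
The plan is to prove the three estimates in order by unpacking the explicit formulae $X = T_F\, Y$ and $\Phi(x,t) = \Phi_Y(x, T_F(x)\, t)$ for $x\in M$ from Lemma \ref{the_flow}, and exploiting the $W^{2,p}$-smallness of $T_F$ via \eqref{regularity_T_F} together with the $C^2$-regularity and quantitative transversality of $Y$ via \eqref{reg_Y}.

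For \eqref{almost_id}, I would write $\Phi(x,t) - x = \Phi_Y(x, T_F(x)\, t) - \Phi_Y(x, 0)$ and use that $\Phi_Y \in C^2(U \times (-t_0, t_0))$ with uniform bounds on all partial derivatives. The chain rule then expresses every tangential derivative of $\Phi(\cdot,t) - Id$ of order at most two as a polynomial combination of bounded partial derivatives of $\Phi_Y$ and tangential derivatives of $T_F$ of order at most two. Since $\|T_F\|_{W^{2,p}(M)} \leq C\delta$ by \eqref{regularity_T_F} and $p > n$ gives the embedding $W^{2,p}(M) \hookrightarrow C^1(M)$, all such products are controlled in $L^p(M)$, which yields \eqref{almost_id}.

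For \eqref{symm_diff}, I would view $F \triangle E$ as the set swept out by the $Y$-flow between $M$ and $M_F$: each point $y \in F \triangle E$ is uniquely of the form $\Phi_Y(x, s)$ with $x \in M$ and $s$ lying between $0$ and $T_F(x)$, by the definition \eqref{def_T_F} of $T_F$. Applying the area formula to the $C^2$-map $(x, s) \mapsto \Phi_Y(x, s)$ and noting that its $n$-dimensional tangential Jacobian at $s = 0$ equals $|\langle Y(x), \nu_M(x)\rangle|$, one obtains, for $\delta$ small enough,
\[
|F \triangle E| \leq C \int_M |T_F(x)|\, |\langle Y(x), \nu_M(x)\rangle|\, d\Ha^{n-1}(x) = C \int_M |\langle X, \nu_M\rangle|\, d\Ha^{n-1},
\]
using that the Jacobian stays comparable to its value at $s = 0$ throughout the interval $[0, T_F(x)]$ (thanks to $\|T_F\|_\infty \leq C\delta$ and the smoothness of $Y$) and that $|\langle Y, \nu_M\rangle| \geq 15/16$ by \eqref{reg_Y}.

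For \eqref{nasty_estimate}, which is the main novelty, I would exploit the algebraic fact that $X = T_F\, Y$ decomposes on $M_t$ as
\[
\langle X, \nu_{M_t}\rangle = T_F \langle Y, \nu_{M_t}\rangle \qquad \text{and} \qquad X_{\tau_t} = T_F\, Y_{\tau_t},
\]
where $Y_{\tau_t} := Y - \langle Y, \nu_{M_t}\rangle\, \nu_{M_t}$. By \eqref{almost_id} and the Sobolev embedding $W^{2,p}(M) \hookrightarrow C^{1,\alpha}(M)$ (valid since $p > n$), the normal $\nu_{M_t}$ is a small $C^{0,\alpha}$-perturbation of $\nu_M$ uniformly in $t$, so \eqref{reg_Y} gives $\langle Y, \nu_{M_t}\rangle \geq 1/2$ on $M_t$ once $\delta$ is small, and moreover $Y_{\tau_t}$ and $1/\langle Y, \nu_{M_t}\rangle$ are $C^1(M_t)$-bounded uniformly in $t$. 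Eliminating $T_F$ between the two identities yields
\[
X_{\tau_t} = \frac{Y_{\tau_t}}{\langle Y, \nu_{M_t}\rangle}\, \langle X, \nu_{M_t}\rangle,
\]
and multiplication by the fixed $C^1$-bounded vector field $Y_{\tau_t}/\langle Y, \nu_{M_t}\rangle$ is a bounded operator on $H^1(M_t)$, which gives \eqref{nasty_estimate}. The main technical subtlety is obtaining the uniform $C^1(M_t)$ control on $Y_{\tau_t}$ and $\langle Y, \nu_{M_t}\rangle$ as $M_t$ varies with $t$, which relies on combining \eqref{almost_id}, the Sobolev embedding (using $p>n$), and the ambient $C^2$-smoothness of $Y$.
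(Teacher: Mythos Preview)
Your arguments for \eqref{almost_id} and \eqref{symm_diff} are correct and essentially equivalent to the paper's: you use the explicit representation $\Phi(x,t)=\Phi_Y(x,T_F(x)t)$ where the paper differentiates the flow ODE and then bounds $\tfrac{d}{dt}|E_t\triangle E|$, but both routes rest on the same ingredients (the $C^2$-regularity of $\Phi_Y$, the bound \eqref{regularity_T_F}, and the area formula already used in the proof of Theorem~\ref{gruter}).

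The gap is in your proof of \eqref{nasty_estimate}. Your identity
\[
X_{\tau_t}=\frac{Y_{\tau_t}}{\langle Y,\nu_{M_t}\rangle}\,\langle X,\nu_{M_t}\rangle
\]
is correct and is actually a cleaner starting point than the paper's pointwise absorption computation. What fails is the claim that the multiplier $Y_{\tau_t}/\langle Y,\nu_{M_t}\rangle$ is \emph{uniformly} $C^1(M_t)$-bounded. The estimate \eqref{almost_id} gives only $\|\Phi(\cdot,t)-Id\|_{W^{2,p}(M)}\le C\delta$, and the Sobolev embedding $W^{2,p}\hookrightarrow C^{1,\alpha}$ (with $p>n$) yields merely $C^{0,\alpha}$ control on $\nu_{M_t}$; the second derivatives of $\Phi(\cdot,t)$, and hence $D_{\tau_t}\nu_{M_t}$, are only bounded in $L^p(M_t)$. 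Since $F$ is assumed $C^2$ but with no uniform $C^2$ bound (only $\|E,F\|_{W^{2,p}}\le\delta$ is controlled), you cannot upgrade this to $L^\infty$. Consequently the multiplier is only uniformly bounded in $W^{1,p}(M_t)$, and multiplication by a $W^{1,p}$ function is not a bounded operator on $H^1$ without further argument.

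The repair is exactly the step the paper performs at the end: apply the product rule to $D_{\tau_t}X_{\tau_t}$, control $\|D_{\tau_t}\nu_{M_t}\|_{L^p(M_t)}$ via \eqref{almost_id}, and estimate the resulting cross term by H\"older with exponents $\tfrac{p}{p-2},\tfrac{p}{2}$ together with the Sobolev embedding $H^1(M_t)\hookrightarrow L^{2p/(p-2)}(M_t)$. Once you replace ``$C^1$-bounded'' by ``$W^{1,p}$-bounded'' and insert this H\"older--Sobolev step, your multiplier argument goes through and delivers \eqref{nasty_estimate} with a constant independent of $F$.
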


\begin{proof}
We begin by proving \eqref{almost_id}. By definition \eqref{flow} of $\Phi$, we can write, for $x \in M$ and $t \in [0,1]$, 
\[
\Phi(x,t) - x = \int_0^t X( \Phi(x, s) ) \, ds,
\]
then, recalling that  $X$ satisfies the estimate $\|X\|_{W^{2,p}}  \leq C\|T_F\|_{W^{2,p}}\leq C \delta$, we easily get $|\Phi(x,t) - x| \leq C \delta$. 

By differentiating \eqref{flow}, we obtain the equations
\[
\frac{\partial }{\partial t} \Phi_{x_i}(x, t) = DX( \Phi(x, t) )\Phi_{x_i}(x, t)  \qquad \Phi_{x_i}(x, 0) = 1.
\]
Together with the previous estimate this implies
\[
| \Phi_{x_i} (x,t)- 1 | \leq  C \delta \qquad \forall\,x\in M
\]
for every $i = 1, 2, \dots, n$ since $p>n$. We differentiate \eqref{flow} once more and use the previous estimates to obtain $\| \Phi_{x_ix_j} (\cdot,t) \|_{L^p(M)} \leq  C \delta$, which implies  \eqref{almost_id}.

It follows from \eqref{almost_id} that  $|\nu_M(x)- \nu_{M_t}(\Phi(x,t))| \leq C \delta$ for $x \in M$ and $t \in [0,1]$. Therefore \eqref{reg_Y} implies  
\begin{equation} \label{trivial_est}
|Y(x) - \nu_{M_t}(x)| \leq \frac{1}{8}\qquad \forall\,  x \in M_t.
\end{equation}
We recall that $T_F(\cdot)$  is constant along the trajectories, i.e., $T_F(\Phi(x,t))= T_F(x)$ for $x \in M$.  Hence, by \eqref{reg_Y}   we have  $|X(\Phi(x,t))| \leq C |T_F(x)| \leq C |\langle X(x), \nu_M(x) \rangle|$ for $x \in M$. Similarly, it follows from \eqref{trivial_est} that 
\begin{equation} \label{easy_est}
C^{-1}  |\langle X(x), \nu_{M}(x) \rangle | \leq |\langle X(\Phi(x,t)), \nu_{M_t}(\Phi(x,t)) \rangle |  \leq C  |\langle X(x), \nu_{M}(x) \rangle |  \qquad \forall\, x \in M.
\end{equation}
for every $t \in [0,1]$, when $\delta$ is sufficienlty small.

We proceed by  showing \eqref{symm_diff}.  We use the same calculation as in the proof of Theorem \ref{gruter} to estimate
\[
\begin{split}
\frac{d }{d t} |E_t \triangle E| \leq   \int_{M_t }| \langle X, \nu_{M_t} \rangle |\, d \Ha^{n-1} \leq C \int_{M}| \langle X, \nu_{M} \rangle |\, d \Ha^{n-1} 
\end{split}
\]
where the last inequality follows from \eqref{almost_id}  and \eqref{easy_est}. Hence
\[
|F \triangle E|= \int_{0}^1 \frac{d }{d t} |E_t \triangle E| \, dt \leq C  \int_{M}| \langle X, \nu_{M} \rangle |\, d \Ha^{n-1}. 
\]

The  estimate \eqref{nasty_estimate}   follows exactly as \cite[Lemma 7.1]{AFM}, but we give the proof for the convenience of the reader. We notice that  \eqref{easy_est} implies $|X_{\tau_t}| \leq |X| \leq C|\langle X, \nu_{M_t} \rangle |$ on $M_t$. In particular, $\| X_{\tau_t}\|_{L^2(M_t)} \leq C \| \langle X, \nu_{M_t} \rangle\|_{L^2(M_t)}$. Notice that we may write $X = \langle X, \bar{Y}\rangle Y$, where $\bar{Y} = \frac{Y}{|Y|}$. Estimates \eqref{bound_Y} and  \eqref{trivial_est} imply $|\bar{Y}- \nu_{M_t} |\leq \frac{1}{4}$ on $M_t$. We may estimate the tangential differential on $M_t$ by \eqref{bound_Y},  \eqref{trivial_est} and by the $C^2$-regularity of $Y$ and $\bar{Y}$ 
\[
\begin{split}
|D_{\tau_t} X_{\tau_t}| &= |D_{\tau_t} X - D_{\tau_t} (\langle X, \nu_{M_t}\rangle  \nu_{M_t})| = |D_{\tau_t} (\langle X, \bar{Y}\rangle Y) - D_{\tau_t} (\langle X, \nu_{M_t}\rangle  \nu_{M_t})| \\
&\leq |D_{\tau_t} ( \langle X, \nu_{M_t} \rangle (Y - \nu_{M_t} )| + |D_{\tau_t}  (\langle X, (\bar{Y} - \nu_{M_t})\rangle Y)|\\
&\leq \frac{1}{8} |D_{\tau_t} \langle X, \nu_{M_t} \rangle| +  \frac{1}{2}|D_{\tau_t}X| + C |X| (1+ |D_{\tau_t}\nu_{M_t}  |).
\end{split}
\]
Since $|D_{\tau_t}X| \leq |D_{\tau_t} X_{\tau_t}| + |D_{\tau_t} \langle X, \nu_{M_t} \rangle|+ |X||D_{\tau_t}\nu_{M_t}  |$ we obtain
\[
|D_{\tau_t} X_{\tau_t}| \leq C|D_{\tau_t} \langle X, \nu_{M_t} \rangle| + C  |\langle X, \nu_{M_t} \rangle|  (1 + |D_{\tau_t}\nu_{M_t}  |),
\]
where we have also used \eqref{easy_est}. We integrate this and use  H\"older's inequality to get
\[
\begin{split}
\|D_{\tau_t} X_{\tau_t}\|_{L^2(M_t)}^2 &\leq C\|\langle X, \nu_{M_t} \rangle\|_{H^1(M_t)}^2  + C \int_{M_t}  |\langle X, \nu_{M_t} \rangle|  |D_{\tau_t}\nu_{M_t}  | \, d \Ha^{n-1}\\
&\leq  C\|\langle X, \nu_{M_t} \rangle\|_{H^1(M_t)}^2  + C\|\langle X, \nu_{M_t} \rangle\|_{L^{\frac{2p}{p-2}}(M_t)}^2  \||D_{\tau_t}\nu_{M_t}  \|_{L^p(M_t)}^2\\
&\leq  C\|\langle X, \nu_{M_t} \rangle\|_{H^1(M_t)}^2, 
\end{split}
\]
where the last inequality follows from \eqref{almost_id} and from Sobolev inequality with $p>n$.
\end{proof}

In the following lemma we show, by continuity, that if $J$ has positive second variation at a critical point $E$ then the quadratic form \eqref{quadratic} remains positive in a $W^{2,p}$-neighborhood of $E$.  
\begin{lemma}
\label{continuity_2nd}
Let $E$ be as in Theorem \ref{mainthm} and $p> n$. There exists $\delta>0$ such that for any $C^2$-regular set $F$ with $\| E,F \|_{W^{2,p}} \leq \delta$ and every $\varphi \in H^1(M_F)$ with  $\int_{M_F} \varphi \, d \Ha^{n-1} = 0$ it holds
\[
\partial^2 J(F)[\varphi] \geq \frac{c_0}{2}\|\varphi \|_{H^1(M_{F})}^2.
\] 
Here $M_F = \overline{\partial F \cap \Omega}$,  $\partial^2 J(F)[\varphi] $ is defined in \eqref{quadratic} and $c_0$ is the constant from Lemma \ref{easy_lemma}. 
\end{lemma}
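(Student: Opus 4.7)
The plan is a perturbation argument. Fix a $C^2$-diffeomorphism $\Psi:\overline{\Omega}\to\overline{\Omega}$ with $\Psi(E)=F$ and $\|\Psi-\mathrm{Id}\|_{W^{2,p}(\Omega)}\le\delta$; such a $\Psi$ exists by the definition of the $W^{2,p}$-distance. Since $p>n$, Morrey's embedding on the $(n-1)$-dimensional manifold $M$ yields $\|\Psi-\mathrm{Id}\|_{C^{1,\alpha}(M)}\le C\delta$ for some $\alpha>0$. This in turn produces the quantitative closeness
\[
\|\nu_{M_F}\circ\Psi-\nu_M\|_{C^{0,\alpha}(M)}+\||B_{M_F}|^2\circ\Psi-|B_M|^2\|_{L^{p/2}(M)}+\|v_F-v_E\|_{C^{1,\alpha}(\overline{\Omega})}\le C\delta,
\]
where the last bound comes from elliptic regularity applied to \eqref{Laplace} and the Lipschitz estimate \eqref{lipschitz}.

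Given $\varphi\in H^1(M_F)$ with zero mean, I would pull it back to $M$ by $\psi:=\varphi\circ\Psi\in H^1(M)$ and then subtract its average, setting $\tilde\psi:=\psi-\fint_M\psi\,d\mathcal{H}^{n-1}$, so that $\tilde\psi$ has zero mean on $M$. A change-of-variables argument, using that the surface Jacobian of $\Psi|_M$ differs from $1$ by $O(\delta)$ in $C^0$ and that $\int_{M_F}\varphi=0$, shows $|\fint_M\psi|\le C\delta\|\varphi\|_{L^2(M_F)}$, hence
\[
(1-C\delta)\|\varphi\|_{H^1(M_F)}\le \|\tilde\psi\|_{H^1(M)}\le (1+C\delta)\|\varphi\|_{H^1(M_F)}.
\]
Then Lemma \ref{easy_lemma} applied to $\tilde\psi$ gives $\partial^2 J(E)[\tilde\psi]\ge c_0\,(1-C\delta)\|\varphi\|_{H^1(M_F)}^2$.

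The heart of the proof is the continuity estimate
\[
\bigl|\partial^2 J(F)[\varphi]-\partial^2 J(E)[\tilde\psi]\bigr|\le C\delta\,\|\varphi\|_{H^1(M_F)}^2,
\]
which I would verify by comparing the five terms of \eqref{quadratic} one at a time after pulling back via $\Psi$. The Dirichlet term is handled by expressing $|D_{\tau_F}\varphi|^2\circ\Psi$ through the pulled-back tangential metric, which is $(1+O(\delta))$ times the metric on $M$. The curvature term uses the $L^{p/2}$-closeness of the second fundamental forms combined with the Sobolev embedding $H^1(M)\hookrightarrow L^{2p/(p-2)}(M)$ applied to $\varphi^2$. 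The boundary integral uses the $C^{0,\alpha}$-closeness of $\nu_{M_F}\circ\Psi$ and the trace inequality $H^1(M)\hookrightarrow L^2(M\cap\partial\Omega)$. The nonlocal double integral is controlled via the continuity of $G$ off the diagonal and \eqref{lipschitz}. Finally, the term involving $\langle\nabla v_F,\nu_{M_F}\rangle$ is estimated by the $C^{1,\alpha}$-bound on $v_F-v_E$. Combining the three estimates above yields $\partial^2 J(F)[\varphi]\ge (c_0-C\delta)\|\varphi\|_{H^1(M_F)}^2$, which is at least $\tfrac{c_0}{2}\|\varphi\|_{H^1(M_F)}^2$ once $\delta$ is chosen sufficiently small.

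The main obstacle I expect is the curvature term: $|B_M|^2$ is only controlled in $L^{p/2}$ and therefore requires a careful interpolation against $\varphi^2$ through the Sobolev embedding, exploiting $p>n$ in a nontrivial way. A secondary difficulty is the boundary trace term, where one must use both the orthogonality condition on $F$ (so that $\nu_{M_F}|_{\partial M_F}$ really agrees with $\nu_\Omega$) and the $C^{0,\alpha}$-closeness of $\nu_{M_F}\circ\Psi$ to $\nu_M$ up to the boundary $M\cap\partial\Omega$, which is where the full strength of the $W^{2,p}$-hypothesis, rather than a mere $C^1$-bound, becomes essential.
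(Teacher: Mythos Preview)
Your approach is correct in spirit but genuinely different from the paper's. The paper argues by contradiction: it assumes there is a sequence $F_k$ with $\|E,F_k\|_{W^{2,p}}\to 0$ and normalized $\varphi_k\in H^1(M_k)$ violating the inequality, pulls the $\varphi_k$ back to $M$, extracts a weak $H^1$-limit $\varphi$, and then shows $\liminf_k \partial^2 J(F_k)[\varphi_k]\ge \partial^2 J(E)[\varphi]$ term by term, using weak lower semicontinuity for the Dirichlet integral and compactness (strong $L^2$, trace, and $H^{-1}$ convergence) for the remaining terms. This contradicts Lemma~\ref{easy_lemma}. Your route is instead a direct quantitative perturbation estimate $|\partial^2 J(F)[\varphi]-\partial^2 J(E)[\tilde\psi]|\le C\delta\|\varphi\|_{H^1}^2$, which is more constructive and yields an explicit dependence of $\delta$ on $c_0$, at the price of having to track constants in every term.

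Two points in your outline deserve sharpening. First, for the nonlocal double integral, invoking ``continuity of $G$ off the diagonal'' is not enough because $G$ is singular on the diagonal and the surfaces $M$ and $M_F$ intersect it; the clean way (which the paper also uses) is to rewrite the double integral as $\int_\Omega|\nabla w|^2$ with $-\Delta w=\varphi\,\mathcal{H}^{n-1}\lfloor_{M_F}$ and prove a quantitative $H^{-1}$ estimate $\|\varphi\,\mathcal{H}^{n-1}\lfloor_{M_F}-\tilde\psi\,\mathcal{H}^{n-1}\lfloor_{M}\|_{H^{-1}(\Omega)}\le C\delta\|\varphi\|_{H^1}$; the reference to \eqref{lipschitz} is not the right tool here. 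Second, your remark about the orthogonality condition on $F$ is unnecessary: the boundary term $\int_{M_F\cap\partial\Omega}B_{\partial\Omega}(\nu_{M_F},\nu_{M_F})\varphi^2$ is well defined regardless, and the lemma as stated does not assume orthogonality of $F$. What you do need for the boundary comparison is that the diffeomorphism $\Psi$ can be taken to map $M\cap\partial\Omega$ onto $M_F\cap\partial\Omega$ (equivalently, to preserve $\partial\Omega$), which is a mild additional requirement on the choice of $\Psi$ that both you and the paper leave implicit.
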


\begin{proof}
We argue by contradiction and assume that there are $F_k$ with $\| E,F_k \|_{W^{2,p}}=\eps_{k} \to 0$ and $\varphi_k \in  H^1(M_k)$ with $\int_{M_k} \varphi_k \, d \Ha^{n-1}= 0$, which by scaling we may assume to satisfy $\|\varphi_k \|_{ H^1(M_k)}= 1$, such that
\begin{equation} \label{too_small}
\partial^2 J(F_k)[\varphi_k] < \frac{c_0}{2}
\end{equation}
where $M_k = \overline{\partial F_k \cap \Omega}$. Let us recall the formula for $\partial^2 J(F_k)$ 
\begin{equation} \label{2nd_for_F}
\begin{split}
\partial^2 J(F_k)[\varphi_k] = & \int_{M_k} \left( |D_{\tau_k}\varphi_k |^2- | B_{M_k} |^2  \varphi_k^2 \right)\, d\Ha^{n-1}  -  \int_{M_k \cap \partial \Omega}  B_{\partial \Omega}(\nu_k,\nu_k) \varphi_k^2  \,d \Ha^{n-2}\\
&+ 8 \gamma \int_{M_k}  \int_{M_k}  G(x,y) \varphi_k(x)  \varphi_k(y) \, d\Ha^{n-1}(x)d\Ha^{n-1}(y)\\
&+  4 \gamma \int_{M_k}  \langle \nabla v_{F_k}, \nu_k \rangle \varphi_k^2 \, d\Ha^{n-1},
\end{split}
\end{equation}
where $\nu_k$ is the unit normal of $F_k$. We will show that there exists $\varphi \in  H^1(M)$ with $\|\varphi\|_{ H^1(M)}=1$ and  $\int_{M} \varphi \, d \Ha^{n-1} = 0$ such that, up to subsequences, 
\[
\lim_{k \to \infty} \partial^2 J(F_k)[\varphi_k] \geq \partial^2 J(E)[\varphi]. 
\] 
This together with \eqref{too_small} contradicts Lemma \ref{easy_lemma}. 

Since $\| E,F_k \|_{W^{2,p}} \to 0$, we can find a sequence of $C^2$-diffeomorphisms $\Psi_k: E \to F_k$ such that $\|\Psi_k- Id\|_{W^{2,p}} \to 0$. By compactness there exists  $\varphi \in  H^1(M)$ with $\|\varphi\|_{ H^1(M)}=1$ and  $\int_{\partial E} \varphi \, d \Ha^{n-1} = 0$ such that, up to a subsequence,  
\[
\varphi_k \circ \Psi_k \rightharpoonup \varphi \qquad \text{weakly in }\,  H^1(M),
\]
where $M = \overline{\partial E \cap \Omega}$. In particular, $\varphi_k \circ \Psi_k \to \varphi$ strongly in $L^2(M)$. We also conclude that $\nu_k \circ \Psi_k \to \nu$  uniformly on $M$ where $\nu$ is the unit normal of $E$. Therefore for the first term in \eqref{2nd_for_F}, by the weak lower semicontinuity, we obtain 
\[
\lim_{k \to \infty} \int_{M_k} |D_{\tau_k}\varphi_k |^2\, d\Ha^{n-1} \geq  \int_{M} |D_{\tau}\varphi |^2\, d\Ha^{n-1} .
\]
Next we observe that the following convergences holds:
\[
B_{M_k} \circ \Psi_k \to B_M \quad \text{in } \, L^p(M) \quad \text{and } \quad v_{F_k} \to v_E \quad \text{in } \, C^{1, \alpha}(\Omega).
\]
Indeed, the first one follows immediately from the $W^{2,p}$-convergence of $F_k$ and the second one follows from the uniform $C^{1,\alpha}$-regularity given by the equation \eqref{Laplace}. Therefore we obtain the convergences of the second and the last term in \eqref{2nd_for_F}
\[
\lim_{k\to \infty} \int_{M_k}| B_{M_k} |^2  \varphi_k^2 \, d\Ha^{n-1} =  \int_{M}| B_{M} |^2  \varphi^2 \, d\Ha^{n-1}
\]
and 
\[
\lim_{k\to \infty} \int_{M_k}  \langle \nabla v_{F_k}, \nu_k \rangle \varphi_k^2 \, d\Ha^{n-1} =  \int_{M}  \langle \nabla v_{E}, \nu \rangle \varphi^2 \, d\Ha^{n-1}.
\]

By compactness of the trace operator we have that  $\varphi_k  \circ \Psi_k  \to \varphi$ on $L^2(M \cap \partial \Omega)$. Hence we deduce the convergence of the boundary term
\[
\lim_{k\to \infty}\int_{M_k \cap \partial \Omega}  B_{\partial \Omega}(\nu_k,\nu_k) \varphi_k^2  \,d \Ha^{n-2} = \int_{M \cap \partial \Omega}  B_{\partial \Omega}(\nu,\nu) \varphi^2  \,d \Ha^{n-2}.
\]

To conclude the proof it remains to show that
\begin{equation*}
\lim_{k\to \infty} \int_{M_k}  \int_{M_k}  G(x,y) \varphi_k(x)  \varphi_k(y) \, d\Ha^{n-1}d\Ha^{n-1} =  \int_{M}  \int_{M}  G(x,y) \varphi(x)  \varphi(y) \, d\Ha^{n-1}d\Ha^{n-1},
\end{equation*}
which is equivalent to show that 
\[
\lim_{k \to \infty}\|Dw_k\|_{L^2(\Omega)} = \|Dw\|_{L^2(\Omega)} 
\]
where 
\[
- \Delta w_k =  \varphi_k \Ha^{n-1} \lfloor_{M_k}, \qquad - \Delta w_k =  \varphi \Ha^{n-1} \lfloor_{M}.
\]
This in turn follows from the strong convergence of $\varphi_k \Ha^{n-1} \lfloor_{M_k}$ to $\varphi \Ha^{n-1} \lfloor_{M}$ in $H^{-1}(\Omega)$. The argument is the same as in the proof of \cite[Theorem 3.9, Step 1]{AFM} and therefore we present here only the main stepes outlining the differences. Fix $f \in H^1(\Omega) \cap C^1(\Omega)$.  Denote by $J_M\Psi_k$ the Jacobian of $\Psi_k$ on $M$ and notice that $J_M\Psi_k \to 1$ uniformly on $M$. For simplicity we denote $\tilde{\varphi}_k= \varphi_k \circ \Psi_k $. By Lemma \ref{the_flow} we can construct a flow $\Phi_{F_k}(\cdot, t)$ such that $\Phi_{F_k}(x, 0)= x$  and  $\Phi_{F_k}(E, 1)= F_k$, and that the associated vector field satisfies $\|X_{k}\|_{W^{2,p}}\leq C \eps_k$. With a slight abuse of notation we denote $\Phi_{F_k}(\cdot, 1) = \Phi_k$.

As in \cite{AFM} we have 
\[
\begin{split}
\left\langle (\varphi_k  \Ha^{n-1} \lfloor_{M_k} -  \varphi \Ha^{n-1} \lfloor_{M}), f \right\rangle = & \int_{M_k} f \varphi_k \, d \Ha^{n-1} -  \int_{M} f \varphi \, d \Ha^{n-1}\\
\leq & \,C \,\|\tilde{\varphi}_k\|_{L^2(M)}\cdot \| f \circ \Phi_k - f\|_{L^2(M)} +\tilde{\eps}_k \|f\|_{H^1(\Omega)} \\
&+  C\|J_M\Phi_k - 1\|_{L^{\infty}(M)} 
\|f\|_{H^1(\Omega)},
\end{split}
\]
with $\tilde{\eps}_k \to 0$. Using \eqref{easy_est} and arguing  as  in the proof of Theorem \ref{gruter} we may estimate pointwise the n-dimensional Jacobian of $\Phi_{F_k}: M \times [0,1] \to \Omega$ from below by $c|X_{k}(\Phi_{F_k}(x,t))|$ with $c>0$.  Let us denote $M_0 = M \cap \{ X_{k} \neq 0\}$. Then \eqref{easy_est} implies that the  map $\Phi_{F_k}: M_0 \times [0,1] \to \Omega$ is  diffemorphism onto its image and that $|X_{k}(\Phi_{F_k}(x,t))| = 0$ for $x \in M \setminus M_0$ and $t \in [0,1]$. Therefore  we have
\[
\begin{split}
\| f \circ \Phi_k - f\|_{L^2(M)} ^2 &= \int_{M} |f (\Phi_k(x)) - f(x)|^2 \, d \Ha^{n-1} \\
&=  \int_{M} \left| \int_0^1 \frac{d}{dt}f (\Phi_{F_k}(x,t)) \,dt \right|^2 d \Ha^{n-1}\\
&\leq  \int_{M}  \int_0^1 |\nabla f (\Phi_{F_k}(x,t))|^2 |X_{k}(\Phi_{F_k}(x,t))|^2 \,dt\, d \Ha^{n-1}\\
&\leq  \|X_{k}\|_{L^{\infty}} \int_{M_0}  \int_0^1 |\nabla f (\Phi_{F_k}(x,t))|^2 |X_{k}(\Phi_{F_k}(x,t))| \,dt \,d \Ha^{n-1}\\
&\leq C  \|X_{k}\|_{L^{\infty}} \int_{\Omega}  |\nabla f (y)|^2  \,dy.
\end{split}
\]
Hence we have 
\[
\|\varphi_k  \Ha^{n-1} \lfloor_{M_k} -  \varphi \Ha^{n-1} \lfloor_{M}\|_{H^{-1}(\Omega)} \to 0, 
\]
and the proof is completed.
\end{proof}

We are now ready to prove Proposition  \ref{smooth}.
\begin{proof}[\textbf{Proof of the Proposition \ref{smooth}}]
Without loss of generality we may assume that $F$ is $C^2$-regular  and $M_F= \overline{\partial F \cap \Omega}$ is a $C^2$-manifold with boundary that meets  $\partial \Omega$ orthogonally. 

Let  $X$ be the vector field and $\Phi$ be the associated flow given by Lemma \ref{the_flow}.  For every $t \in [0,1]$ we set $E_t = \Phi(E, t)$, $M_t = \overline{\partial E_t \cap \Omega}$ and denote by $\nu_t$ the unit normal to $M_t$. Since $E$ is a critical set we have
\[
\frac{d}{dt} J(E_t) \big|_{t=0} = 0.
\] 
Moreover, Proposition \ref{2ndVariation} and Lemma \ref{easy_lemma} yield in particular, for $\varphi =  \langle X, \nu \rangle$ that 
\[
\frac{d^2}{dt^2} J(E_t) \big|_{t=0} = \partial^2 J(E)[\varphi] \geq c_0 \|\varphi\|_{H^{1}(M)}^2
\]  
for some constant $c_0>0$. The idea of the proof is to show, by a continuity argument, that if $\delta>0$ is chosen small enough, we have
\begin{equation}
 \label{key_estim}
\frac{d^2}{dt^2} J(E_t) \geq  \frac{c_0}{4}\|\varphi_t\|_{H^{1}(M_t)}^2,
\end{equation}
for every $t \in [0,1]$, where $\varphi_t =  \langle X, \nu_t \rangle$. Once we have \eqref{key_estim} the result then follows since we may infer, by  \eqref{almost_id} and \eqref{easy_est}, that 
\[
\int_{M_t} |\varphi_t|^2 \, d \Ha^{n-1}  \geq c \int_{M}  |\langle X, \nu \rangle |^2 \, d \Ha^{n-1}.
\]
and therefore the estimates \eqref{symm_diff}  and \eqref{key_estim} yield
\[
J(F)- J(E) = \int_0^1 (1-t )\frac{d^2}{dt^2}J(E_t)\, dt \geq c\, \|\varphi\|_{L^2(M)}^2 \geq  c \, | F \triangle E|^2,
\]
proving the claim. 

Hence, it remains to prove \eqref{key_estim}. By proposition \ref{2ndVariation} we can write the second second derivative of $J(E_{t})$ at $t$ as 
\[
\begin{split}
\frac{d^2 J(E_t)}{dt^2}  = & \; \partial^2 J(E_t)[\varphi_t] -\int_{M_t} (H_{M_t} + 4 \gamma v_{E_t})\diver_{\tau_t} \left( X_{\tau_t} \langle X, \nu_{t} \rangle \right) \, d \Ha^{n-1} \\
&+ \int_{M_t} (H_{M_t} + 4 \gamma v_{E_t})\diver(X)  \langle X, \nu_t \rangle  \, d \Ha^{n-1} 
\end{split}
\]
for $\varphi_t =  \langle X, \nu_t \rangle$ as before.  Notice that since the flow preserves the volume we have
\[
 \int_{M_t} \varphi_t \, d \Ha^{n-1}  =  \int_{M_t} \langle X, \nu_t \rangle \, d \Ha^{n-1} =0.
\]
Therefore when $\delta>0$ is small enough Lemma \ref{continuity_2nd} yields
\begin{equation} 
\label{1_terms_small}
\partial^2 J(E_t)[\varphi_t] \geq  \frac{c_0}{2}\|\varphi_t \|_{H^1(M_t)}^2.
\end{equation}
Moreover, by \eqref{def_Phi_X} $\diver X= 0$ and therefore to conclude the proof it is enough to show that, when $\delta>0$ is choosen small enough, we have
\begin{equation} \label{1_extraterm}
S_t:= \left| \int_{M_t} (H_{M_t} + 4 \gamma v_{E_t})\diver_{\tau_t} \left( X_{\tau_t} ( \langle X, \nu_{t} \rangle) \right) \, d \Ha^{n-1} \right| \leq  \frac{c_0}{4}\|\varphi_t \|_{H^1(M_t)}^2.
\end{equation}
Since $E$ solves \eqref{euler_lag} we deduce that $(H_{M_t} + 4 \gamma v_{E_t})\circ \Phi(\cdot, t) \to \lambda$ in $L^p(M)$. Therefore given $\eps>0$ for sufficiently small $\delta$we have 
\[
\begin{split}
S_t &=  \big| \int_{M_t} (H_{M_t} + 4 \gamma v_{E_t}- \lambda)\diver_{\tau_t} \left( X_{\tau_t}  \langle X, \nu_{t} \rangle \right) \, d \Ha^{n-1} \big| \\
&\leq  \left( \int_{M_t} (H_{\partial F_t} + 4 \gamma v_{E_t}- \lambda)^p\, d \Ha^{n-1} \right)^{\frac{1}{p}} \left( \int_{M_t} \left( \diver_{\tau_t} ( X_{\tau_t} \langle X, \nu_{t} \rangle) \right)^{\frac{p}{p-1}}\, d \Ha^{n-1} \right)^{\frac{p-1}{p}} \\
&\leq \eps \left(  \|D_{\tau_t} X_{\tau_t}\|_{L^2(M_t)} \| \langle X, \nu_{t} \rangle\|_{L^{\frac{2p}{p-2}}(M_t)} +    \|D_{\tau_t} \langle X, \nu_{t} \rangle\|_{L^2(M_t)} \| X_{\tau_t}\|_{L^{\frac{2p}{p-2}}(M_t)} \right) \\
&\leq C \eps\,  \|X_{\tau_t}\|_{H^1(M_t)} \| \langle X, \nu_{t} \rangle\|_{H^1(M_t)}
\end{split}
\]
where the last inequality follows from Sobolev inequality and from $p>n$. The estimate \eqref{1_extraterm} then follows from \eqref{nasty_estimate} of Lemma \ref{check_reg}.
\end{proof}

\section{Proof of the main theorem}

In this section we prove Theorem  \ref{mainthm}. As it was  mentioned in the introduction we will use the regularity of $\Lambda$-minimizers  to rule out those competing sets which are not regular. This idea goes back to the work by White \cite{Wh} and more recently it has been used by Cicalese and Leonardi \cite{CL} and by  Fusco and Morini \cite{FM}. We begin by proving a simple lemma.
\begin{lemma} \label{isop.lemma}
Suppose that $E \subset \Omega$ is $C^2$-regular and satisfies the orthogonality condition \eqref{orthogonality}. There is a constant $C$ depending only on $E$ such that  for every $ F \in  BV(\Omega)$ we have
\[
J(E) \leq J(F) +  C | F \triangle E|\,.
\]
\end{lemma}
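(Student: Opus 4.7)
The plan is to reduce the claim to a perimeter comparison via the Lipschitz estimate for the nonlocal part, and then to prove the perimeter comparison by a calibration argument.

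By \eqref{lipschitz} the nonlocal part of $J$ is Lipschitz in $L^1$, so it suffices to establish
\[
P(E, \Omega) \leq P(F, \Omega) + C|F \triangle E|
\]
with a constant depending only on $E$. To do this I would construct a calibration: a vector field $N \in C^1(\overline{\Omega}; \R^n)$ satisfying
\begin{enumerate}
\item[(i)] $N = \nu_M$ on $M$;
\item[(ii)] $|N(x)| \leq 1$ for every $x \in \overline{\Omega}$;
\item[(iii)] $\langle N(x), \nu_\Omega(x) \rangle = 0$ for $x \in \partial \Omega$.
\end{enumerate}
The orthogonality condition \eqref{orthogonality} is what makes (i) and (iii) compatible on $M \cap \partial \Omega$, since $\nu_M$ is already tangent to $\partial\Omega$ there. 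Such an $N$ can be built in the spirit of Step 1 in the proof of Lemma \ref{the_flow}: extend $\nu_M$ tangentially near $\partial\Omega$ using a locally flattened chart together with the signed distance $d_{M\cap \partial\Omega}$ on $\partial\Omega$, glue it in the interior with $\nabla d_M$ via a cutoff, and finally multiply by a smooth bump supported in a narrow tubular neighborhood of $M$ chosen so that the pointwise norm of the result stays below $1$.

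Granted such an $N$, the divergence theorem applied to $E$ yields
\[
\int_E \diver N\, dx = \int_M \langle N, \nu_M\rangle\, d\Ha^{n-1} + \int_{\partial E \cap \partial \Omega} \langle N, \nu_\Omega\rangle\, d\Ha^{n-1} = P(E, \Omega),
\]
where the second integral vanishes by (iii) and the first equals $P(E,\Omega)$ by (i). For arbitrary $F \in BV(\Omega)$, integration by parts for $BV$ functions combined with (iii) and (ii) gives
\[
\int_F \diver N\, dx = - \int_\Omega \langle N, D\chi_F \rangle \leq \|N\|_{L^\infty} \, |D\chi_F|(\Omega) \leq P(F, \Omega).
\]
Subtracting the two identities,
\[
P(E, \Omega) - P(F, \Omega) \leq \int_\Omega (\chi_E - \chi_F) \diver N \, dx \leq \|\diver N\|_{L^\infty} \, |E \triangle F|,
\]
which is the desired perimeter estimate with $C = \|\diver N\|_{L^\infty}$.

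The main obstacle is the construction of $N$: the three conditions (i)--(iii) all interact near $M \cap \partial \Omega$, and their simultaneous compatibility is exactly what \eqref{orthogonality} provides. The pointwise norm bound (ii), needed to turn the BV integration by parts into an inequality against $P(F,\Omega)$, is the delicate point, since a naive extension of $\nu_M$ via a single smooth vector field will typically have norm exceeding $1$ away from $M$; a radial cutoff in the normal direction in a sufficiently thin tubular neighborhood fixes this without destroying (i) or (iii).
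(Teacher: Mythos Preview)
Your argument is correct and is essentially the same as the paper's: reduce to a perimeter comparison via \eqref{lipschitz}, construct a $C^1$ calibration vector field equal to $\nu_M$ on $M$, tangent to $\partial\Omega$, and with $L^\infty$-norm at most $1$, then use the divergence theorem on $E$ and $F$ to bound $P(E,\Omega)-P(F,\Omega)$ by $\|\diver N\|_{L^\infty}|E\triangle F|$. The paper simply asserts the existence of such a vector field without the construction details you sketch, but the proof structure is identical.
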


\begin{proof}
By the Lipschitz continuity of the non-local part \eqref{lipschitz} we have
\[
J(E) \leq J(F ) +  P(E, \Omega) - P(F, \Omega) + C\, | F \triangle E|\, .
\]
On the other hand by the assumptions on $E$ we may construct a $C^1$ vector field $X $ on $\Omega$ such that  $X=\nu_E$ on $M = \overline{\partial E \cap \Omega}$, $ \langle X, \nu_{\Omega} \rangle = 0 $ on $\partial \Omega$  and $\|X\|_{L^{\infty}} \leq 1$.  Therefore
\[
\begin{split}
 P(E, \Omega) - P(F, \Omega) &\leq \int_{\partial (E \cap \Omega)} \langle X, \nu \rangle\, d\Ha^{n-1}(x) - \int_{\partial (F \cap \Omega)} \langle X, \nu \rangle\,d \Ha^{n-1}(x) \\
&= \int_{E \cap \Omega} \diver  X\, dx - \int_{F \cap \Omega} \diver X\, dx \\
&\leq  \|\diver  X\|_{L^{\infty}} | F \triangle E|\, .
\end{split}
\]
\end{proof}
We remark that the above proof also yields that  $E$ is a $\Lambda$-minimizer in the sense of Definition \ref{almost_area}.

The next lemma that relates convergence of mean curvatures with $W^{2,p}$-convergence, is very similar to  \cite[Lemma 7.2]{AFM} therefore we only give a sketch of the proof.

\begin{lemma} 
\label{elliptic}
Let $p >n$. Suppose that $E$ is as in Theorem \ref{mainthm}, and let $F_{k}$ be a sequence of sets such that there exist diffeomorphisms $\Psi_k: E \to F_k$ such that  $\|\Psi_k - Id\|_{C^{1,\alpha}} \to 0$ and
\[
H_{\partial F_k}  (\Psi_k (\cdot)) \to H_{\partial E} (\cdot)  \quad \text{in }\, L^p(M)\,.
\]
Then we can find a family of diffeomorphisms $\tilde{\Phi}_k: E \to F_k$ such that  $\|\tilde{\Phi}_k - Id\|_{W^{2,p}} \to 0$. 
\end{lemma}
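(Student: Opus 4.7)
The plan is to upgrade the convergence $\Psi_k \to Id$ from $C^{1,\alpha}$ to $W^{2,p}$ by exploiting the elliptic character of the prescribed-mean-curvature equation, following the scheme of \cite[Lemma 7.2]{AFM}. First I would use the $C^{1,\alpha}$-closeness of $M_k := \overline{\partial F_k \cap \Omega}$ to $M$ to represent each $M_k$ as a normal graph over $M$: via the tubular neighborhood of $M$ and the nearest-point projection onto it, there exist $h_k \in C^{1,\alpha}(M)$ with $\|h_k\|_{C^{1,\alpha}(M)} \to 0$ such that
\[
M_k = \{x + h_k(x)\nu_M(x) : x \in M\}.
\]
From now on it suffices to argue about the scalar functions $h_k$, since the original $\Psi_k$ can be replaced without loss of generality by the normal-graph maps $x \mapsto x + h_k(x)\nu_M(x)$.

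Second, I would use the fact that the mean curvature of a normal graph over $M$ is a quasilinear second-order operator in $h_k$:
\[
H_{M_k}(x + h_k(x)\nu_M(x)) = -\diver_\tau\!\bigl( A(x, h_k, D_\tau h_k) D_\tau h_k \bigr) + b(x, h_k, D_\tau h_k),
\]
with $A$, $b$ smooth in their arguments, uniformly elliptic, and linearizing at $h=0$ to the Jacobi operator $-\Delta_M - |B_M|^2 h$. Since $\|h_k\|_{C^{1,\alpha}} \to 0$ the frozen coefficients converge in $C^{0,\alpha}(M)$ to those of the linearization, and the hypothesis $H_{\partial F_k}(\Psi_k(\cdot)) \to H_{\partial E}$ in $L^p(M)$ says that the forcing term in the PDE satisfied by $h_k$ converges in $L^p(M)$ to $H_M$. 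Standard $L^p$ Calder\'on-Zygmund regularity theory, applied to the linear elliptic equation satisfied by the differences $h_k$ after freezing coefficients, then yields interior convergence $\|h_k\|_{W^{2,p}_{\mathrm{loc}}(M \setminus \partial M)} \to 0$.

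Finally I would treat the boundary $\partial M = M \cap \partial \Omega$. Since $E$ satisfies the orthogonality condition \eqref{orthogonality} and $M_k \to M$ in $C^{1,\alpha}$, a direct computation shows that $h_k$ satisfies an oblique/Neumann-type boundary condition of the form $\partial_{\nu^*} h_k = r_k(x, h_k)$ on $\partial M$, with $r_k$ smooth in its arguments and $r_k(\cdot, h_k) \to 0$ in $C^{0,\alpha}(\partial M)$. This is compatible with the $L^p$ theory for oblique boundary value problems and promotes the interior bound to the global bound $\|h_k\|_{W^{2,p}(M)} \to 0$. The desired diffeomorphism $\tilde{\Phi}_k: E \to F_k$ is then obtained by extending $x \mapsto x + h_k(x)\nu_M(x)$ to $\overline{\Omega}$: picking a cutoff $\zeta \in C_c^\infty$ supported in a tubular neighborhood $U$ of $M$ where the nearest-point projection $\pi : U \to M$ is smooth, one sets
\[
\tilde{\Phi}_k(x) = x + \zeta(x)\, h_k(\pi(x))\, \nu_M(\pi(x)),
\]
which for $k$ large is a $W^{2,p}$-diffeomorphism of $\overline{\Omega}$ sending $E$ onto $F_k$ and satisfying $\|\tilde{\Phi}_k - Id\|_{W^{2,p}(\Omega)} \to 0$. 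The main obstacle is the precise handling of $\partial M$: unlike the torus setting of \cite{AFM}, here one must verify that the orthogonality of $M$ combined with the $C^{1,\alpha}$-convergence $M_k \to M$ induces on $h_k$ a boundary condition of the right structure for boundary $L^p$ estimates to apply.
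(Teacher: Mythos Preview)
Your approach matches the paper's sketch: interior $W^{2,p}$-convergence via $L^p$ elliptic estimates for the mean-curvature equation written on a graph (as in \cite[Lemma~7.2]{AFM}), and a separate boundary argument near $\partial M$. The paper handles the boundary by first flattening $\partial\Omega$ locally to a half-space and then applying interior-type estimates, rather than deriving an oblique condition on $h_k$ directly; this also sidesteps the issue that your normal-graph map $x \mapsto x + h_k(x)\nu_M(x)$ does not in general send $\partial M$ into the curved $\partial\Omega$, so the graph representation itself has to be modified near the boundary before any boundary condition can be read off.
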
 

\begin{proof}[Sketch of the proof]
Since $M_k$ are $C^{1,\alpha}$ manifolds with boundary they can be represented locally as a graph of $C^{1,\alpha}$ functions. Far from the relative boundary $\overline{\partial F_k \cap \partial\Omega}$ the convergence follows as in  \cite[Lemma 7.2]{AFM}. On the boundary we can flatten the boundary $\partial \Omega$ locally to a half space and then use  similar elliptic regularity estimate as  in the interior case. 
\end{proof}

We are now ready to prove the main result of the paper. 

\begin{proof}[Proof of Theorem \ref{mainthm}] As in \cite{AFM,CJP} the proof is divided into two steps: we first prove that $E$ is a local minimizer with respect to Hausdorff distance and then sharpen the result to obtain the local minimality with respect to $L^1$ distance  with the quantitative estimate.

\vspace{0.3cm}
\noindent\textsc{Step 1:}  First we prove that $E$ is a  local minimizer in $L^{\infty}$-topology, i.e., there is $\delta>0$ such that for every $F \subset \mathcal{I}_{\delta}(E):=  \{ x \in \Omega \mid \dist(x,  E) < \delta \}  $ of finite perimeter in $\Omega$ with $|F|= |E|$ it holds
\[
J(F)\geq J(E).
\] 

We argue by contradiction and assume that there exist $E_k  \subset \Omega$ such that $|E_k|= |E|$ , $E_k \subset \mathcal{I}_{1/k}(E )$ verifying
\[
J(E_k) < J(E).
\]
Since $M=\overline{\partial E \cap \Omega}$ is  $C^{4, \alpha}$-regular and satisfies the orthogonality condition \eqref{orthogonality}, there exist a neighborhood of $M$,  $U$, and a function  $g_M \in C^{2}(U)$ such that 
\begin{enumerate}
\item[(i)] $g_{M} = 0$ on $M$ and $g_{M} \leq  0$ in $E$,
\item[(ii)] $|\nabla g_{M}| \geq c >0$ in  $U$,  
\item[(iii)] $\langle \nabla g_{M},  \nu_{\Omega} \rangle    = 0 $ on $ \partial \Omega \cap U$,
\end{enumerate}
We denote the sublevel set  of $g_{M} $ by
\[
U_{\eps} := \{ x \in \Omega \mid g_{M}(x) < \eps\} .
\]
These sets are  $C^2$ regular and, by the previous condition (iii), they satisfy the orthogonality condition \eqref{orthogonality}. Moreover,  there is a sequence $\eps_k \to 0$ such that 
\begin{itemize}
\item[(i)] $ \mathcal{I}_{1/k}(E ) \subset   U_{\eps_k}$,
\item[(ii)]  $\|U_{\eps_k} , E\|_{C^{2}} \to 0$. 
\end{itemize}
We remark that we may, for instance, define the sets $U_{\eps}$ by using the flow $\Phi_Y$ constructed in the Step 2 of Lemma \ref{the_flow} as
\[
U_{\eps}:= \Phi_Y(E, \eps). 
\]
Using the termonology of Lemma \ref{the_flow} we choose $T_F(x) =1$  for every $x$. The orthogonality of $U_{\eps}$, then follows exactly as in Step 3 of Lemma \ref{the_flow}. 

Using an argument similar to \cite[proof of Theorem 4.3, Step 1]{AFM} we show that the contradicting sequence $\{E_k\}_{k}$ can be replaced by $\{F_k\}_{k}$ defined by the minimizers of the problems
\begin{equation} \label{replace}
J(F) + \Lambda_1 \big| |F|- |E| \big|,  \qquad  F \subset U_{\eps_k},
\end{equation}
when $\Lambda_1$ is sufficiently large. This will follows once we prove that we can choose $\Lambda_{1}$ in such a way that $F_k$ satisfies the volume constraint
\begin{equation}
\label{volume_con2}
|F_k| = |E|.
\end{equation}
Indeed, \eqref{volume_con2} together with the minimality of $F_{k}$ for \eqref{replace} give $J(F_k) < J(E)$ for every $k$ and obviously $F_k \to E$ in Hausdorff topology.

We then argue by contradiction and assume that $|F_k| < |E|$, the case $|F_k| > |E|$ can be handle similarly. We may choose  $\eps \leq \eps_k$ such that the set 
\[
\tilde{F_k} = F_k \cup U_{\eps}
\]
satisfies $|\tilde{F_k}| = |E|$. The reduced boundary $\partial^* \tilde{F_k}$ can be decomposed in three disjoint parts, one contained in $\partial^* \tilde{F_k} \setminus \partial U_{\eps}$, another contained in $\partial U_{\eps} \setminus \partial^* \tilde{F_k}$, and the third one $\{ x \in \partial^* \tilde{F_k} \cap  \partial U_{\eps} \mid \nu_{\tilde{F_k}}(x)= \nu_{U_{\eps}}(x) \}$. Notice that  $\nu_{U_{\eps}} = \frac{\nabla g_{M}}{ | \nabla g_{M}|}$ on $\partial U_{\eps}$. Therefore by choosing  $X :=  \frac{\nabla g_{M}}{ | \nabla g_{M}|}$ we get
\[
P(\tilde{F_k} , \Omega) - P(F_k , \Omega) \leq  \int_{\partial^* \tilde{F_k} \cap \Omega} \langle X, \nu_{\tilde{F_k}} \rangle\, d \Ha^{n-1} - \int_{\partial^* F_k \cap \Omega} \langle X, \nu_{\F_k} \rangle\, d \Ha^{n-1}. 
\]
Notice that it holds $F_k \subset \tilde{F_k}$. Hence, by the Lipschitz property \eqref{lipschitz} we have
\[
\begin{split}
J(\tilde{F_k}) - J(F_k) - \Lambda_1 \big| |F_k|- |E| \big|  \leq & \,  P(\tilde{F_k} , \Omega) - P(F_k , \Omega) + (C\gamma -\Lambda_1)|\tilde{F_k} \setminus F_k|\\
 \leq  &  \int_{\partial^* \tilde{F_k} \cap \Omega} \langle X, \nu_{\tilde{F_k}} \rangle\, d \Ha^{n-1} - \int_{\partial^* F_k \cap \Omega} \langle X, \nu_{F_k} \rangle\, d \Ha^{n-1} \\ 
 & + (C\gamma -\Lambda_1)|\tilde{F_k} \setminus F_k|\\
\leq & \int_{\tilde{F_k} \setminus F_k} |\diver X|\, dx  + (C\gamma -\Lambda_1)|\tilde{F_k} \setminus F_k|.
\end{split}
\]
Therefore \eqref{volume_con2} follows if 
\[
\Lambda_1 > C\gamma + \|\diver X\|_{L^{\infty}}.
\]

Next we show that $F_k$ are $\Lambda$-minimizers with a constant $\Lambda$, independent of $k$. To that aim let $G \subset \Omega $ be a  set of finite perimeter. We divide $G$ into two parts
\[
G \cap U_{\eps_k}\quad \text{and} \quad  G \setminus U_{\eps_k}.
\] 
By the minimality of $F_k$ and by \eqref{lipschitz} we obtain
\begin{equation}
\label{inside}
P(F_k, \Omega) \leq P(G \cap U_{\eps_k},  \Omega) + C\gamma|(G \triangle F_k) \cap U_{\eps_k}|.
\end{equation}
Moreover, since $U_{\eps_k}$ are uniformly $C^2$ regular and satisfy the orthogonality condition, they are $\Lambda$-minimizers, as we remarked after the proof of Lemma \ref{isop.lemma}. Hence, 
\begin{equation}
\label{outside}
P(U_{\eps_k},  \Omega) \leq P(G \cup U_{\eps_k},  \Omega) +C|G \setminus U_{\eps_k}|\,,
\end{equation}
for some $C>0$. Since 
\[
P(G \cup U_{\eps_k},  \Omega) + P(G \cap U_{\eps_k},  \Omega) \leq P(G,  \Omega) + P(U_{\eps_k},  \Omega)
\] 
the estimates \eqref{inside} and \eqref{outside} yield
\[
P(F_k,  \Omega) \leq P(G,  \Omega) +  \Lambda |F_k \triangle G|
\] 
for some large  $\Lambda$. Thus we have the  $\Lambda$-minimality. By the $\Lambda$-minimizing property of $F_k$ and by Theorem \ref{convergence} we conclude that $F_k \to E$ in $C^{1,\alpha}$ and that $F_k$ satisfy the orthogonality condition \eqref{orthogonality}.

Finally we will use the Euler-Lagrange equation for $F_k$ to conclude that $\|F_k, E\|_{W^{2,p}} \to 0$. The Euler-Lagrange equation for $F_k$ reads as 
\begin{equation}
\label{euler_pen1}
 \begin{cases}
 H_{M_k}  + 4 \gamma v_{F_k} = \lambda_k &\text{ on } M_k \cap  U_k, \\
 H_{M_k}  + 4 \gamma v_{E} + \gamma_k = \lambda  &\text{ on } M_k \setminus  U_k,
\end{cases}
\end{equation}
where $\gamma_k$ is some remainder term which converges uniformly to zero, and $\lambda_k$ and $\lambda$ are the Lagrange multipliers associated to the volume constraint. We remark also that the proof of the Theorem \ref{gruter} implies that the mean curvatures of the relative boundaries of the sets $F_k$ are uniformly bounded, i.e.  $\|H_{M_k}\|_{L^{\infty}} \leq \Lambda$.  Moreover the equation \eqref{Laplace}  implies  
\begin{equation}
\label{non-local_conv}
v_{F_k} \to v_E \qquad \text{ in } \,  C^{1}(\Omega).
\end{equation}

We can then show that 
\begin{equation}
\label{curv_conv}
H_{M_k}  (\Psi_k (\cdot)) \to H_{M} (\cdot)  \quad \text{ in }\, L^p(M)\,,\;\forall\;p>n.
\end{equation}
Indeed, consider the vector field $X \in C^{1}(\Omega, \R^n)$ as in the proof of Lemma \ref{isop.lemma}, i.e., $X = \nu_M$ on $M$ and $ \langle X, \nu_{\Omega} \rangle = 0 $ on $\partial \Omega$. We multiply the equation \eqref{euler_pen1} by $\langle X, \nu_{M_k} \rangle$, integrate over $M_k$ and use integration by parts (recall that $M_k$ satisfies the orthogonality condition \eqref{orthogonality}) to deduce 
\[
\begin{split}
\int_{M_k \cap  U_k}( \lambda_k - 4 \gamma v_{F_k}  ) &\langle X, \nu_{M_k} \rangle\, d \Ha^{n-1}  + \int_{M_k \setminus  U_k} ( \lambda -  4 \gamma v_{E} - \gamma_k  )\langle X, \nu_M \rangle\, d \Ha^{n-1}\\
&=  \int_{M_k \cap \Omega} H_{M_k}  \langle X, \nu_{F_k} \rangle  \, d \Ha^{n-1} \\
&= \int_{M_k \cap \Omega} \diver_{\tau_k} X \, d \Ha^{n-1},
\end{split}
\]
Moreover using the  $C^{1,\alpha}$-convergence we get
\begin{equation*}
\lim_{k\to \infty}\int_{M_k \cap \Omega} \diver_{\tau_k} X \, d \Ha^{n-1} = \int_{M  \cap \Omega} \diver_{\tau} X\, d \Ha^{n-1}=  \int_{M\cap \Omega} ( \lambda -  4 \gamma v_{E} )\, d \Ha^{n-1},
\end{equation*}
where the last equality follows from the Euler-Lagrange equation \eqref{euler_lag} and from the fact that $X = \nu_E$ on $\partial E \cap \Omega$. Therefore the $C^{1,\alpha}$-convergence of $F_k$ and \eqref{non-local_conv}   imply that either  $\lambda_k \to \lambda$, or $\Ha^{n-1}(M_k \cap  U_k) \to 0$. In either case we obtain \eqref{curv_conv} due to the fact that $H_{M_k}$ are uniformly bounded in $L^{\infty}$.

From \eqref{curv_conv} and Lemma \ref{elliptic} we deduce $\|F_k, E\|_{W^{2,p}} \to 0$. Since $F_k$ satisfy the orthogonality condition  we may use Proposition \ref{smooth} to conclude 
\[
J(F_k) \geq J(E)
\]
when $k$ is large. This contradicts the minimality  of $F_k$ since, as we already observed, we have $J(F_k) < J(E)$.

\vspace{0.3cm}
\noindent\textsc{Step 2:} As in the previous Step we argue by contradiction and assume that there exist $E_k  \subset \Omega$ such that $|E_k| = |E|$, $|E_k \triangle E| \to 0$ and
\[
J(E_k) < J(E) + \frac{c_1}{4}\, |E_k \triangle E|^2 \, ,
\]
where the constant  $c_1$ is from  Proposition \ref{smooth}. Denote $\eps_k := |E_k \triangle E|$. We will replace the contradicting sequence $\{E_k\}_{k}$ by $\{F_k\}_{k}$, were each $F_k$ solves the minimization problem 
\begin{equation} \label{replace0}
\min \left\{ J(F)   + \Lambda_1 \sqrt{  (|F \triangle E| -\eps_k )^2 + \eps_k} \, : \,  F \subset \Omega\,\, \text{with }\, |F| = |E| \right\}, 
\end{equation}
for some constant $\Lambda_1$ which will be chosen later.

We may use the same argument  as in the proof of  Proposition \ref{almost_area_2} to deduce that $F_k$ minimizes the penalized problem
\begin{equation} \label{replace2}
J(F)   +  \Lambda_2 \big| |F| - |E|\big| +  \Lambda_1 \sqrt{  (|F \triangle E| -\eps_k )^2 + \eps_k},  \qquad F \subset \Omega, 
\end{equation}
for large enough $\Lambda_2$, which is   independent of  $k$.

By compactness we may assume that, up to a subsequence, $F_k \to F_0$ in $L^1$, and that $F_0$ minimizes
\[
J(F)   +    \Lambda_2  \big| |F| - |E|\big| +  \Lambda_1  |F \triangle E|,  \qquad F \subset \Omega.
\] 
By choosing $\Lambda_1$ large, but independent of $\Lambda_2$, it follows from Lemma \ref{isop.lemma} that $F_0= E$. In particular $F_k \to E$ in $L^1$. 

As in Step 1 we observe that every $F_k$ is a $\Lambda$-minimizer  with  $\Lambda$  independent of $k$. In fact, since there are no obstacle in \eqref{replace2}, this observation follows exactly as in the proof of  Proposition \ref{isop.lemma}. Therefore Theorem \ref{convergence} implies that $F_k \to E$ in $C^{1,\alpha}$ and that $F_k$ are $C^{1,\alpha}$-manifolds with boundary for sufficiently large $k$  and  satisfies the orthogonality condition \eqref{orthogonality}.  Moreover, since the mean curvature $H_{M_k}$  is bounded by $\Lambda$ we conclude that $M_k$ is $W^{2,p}$-regular for every $p>n$. 

We use  the minimality of $F_k$, the contradiction assumption, and Step 1 to obtain
\[
\begin{split}
J(F_k) + \Lambda_2 \sqrt{  (|F_k \triangle E| -\eps_k )^2 + \eps_k} &\leq J(E_k) + \Lambda_2 \sqrt{\eps_k} \\
&\leq  J(E) + \frac{c_0}{8}\eps_k^2  + \Lambda_2 \sqrt{\eps_k} \\
&\leq J(F_k) + \frac{c_0}{8}\eps_k^2  + \Lambda_2 \sqrt{\eps_k}.
\end{split}
\]
The previous inequality yields
\begin{equation}
\label{quotent_est}
\lim_{k \to \infty}  \frac{(|F_k \triangle E|- \eps_k) }{\eps_k} =0. 
\end{equation}
In particular
\begin{equation}
\label{quotent_est2}
  \frac{(|F_k \triangle E|- \eps_k) }{\sqrt{  (|F_k \triangle E| -\eps_k )^2 + \eps_k}  }  \leq \sqrt{\eps_k},
\end{equation}
for large $k$.

Arguing similarly to the proof of \cite[Theorem 1.1]{AFM} we now show that $M_k$ solves the Euler-Lagrange equation 
\begin{equation}
\label{euler_pen2}
 H_{M_k}  + 4 \gamma v_{F_k} +  \Lambda_1 f_k= \lambda_k  \quad \text{ on } M_k, 
\end{equation}
with $\|f_k\|_{L^{\infty}} \leq C \sqrt{\eps_k}$. Fix a vector field $X \in C^{\infty}_0(\Omega)$  such that $\diver X = 0$, and let $\Phi$ be the assciated flow. By \eqref{1st_vol} we have $|\Phi(F_k, t)| = |F_k|$. As in the proof of Theorem \ref{gruter} we conclude that 
\[
|\Phi(F_k, t) \triangle F_k| \leq |t| \int_{M_k} |\langle X, \nu_{M_k} \rangle|\, d \Ha^{n-1} + o(t).
\]
We then  use \eqref{quotent_est2} to estimate 
\begin{equation}
\label{quotent_est3}
\lim_{t \to 0} \frac{1}{t} \left| \sqrt{  \left(|\Phi(F_k, t) \triangle E| -\eps_k \right)^2 + \eps_k} - \sqrt{  \left(|F_k \triangle E| -\eps_k \right)^2 + \eps_k} \right| \leq C\sqrt{\eps_k}\int_{M_k} \left|\langle X, \nu_{M_k} \rangle\right|\, d \Ha^{n-1} .
\end{equation}
The minimality of $F_k$ yields
\[
J\left(\Phi(F_k, t)\right) + \Lambda_1 \sqrt{ \left(|\Phi(F_k, t) \triangle E| -\eps_k \right)^2 + \eps_k} \geq J(F_k) + \Lambda_1 \sqrt{  \left(|F_k \triangle E| -\eps_k \right)^2 + \eps_k}.
\]
Hence, we have by the first variation formula of $J(F_k)$ and by \eqref{quotent_est3} that 
\[
\left| \int_{M_k} (H_{M_k}  + 4 \gamma v_{F_k}) \langle X, \nu_{M_k} \rangle  \, d \Ha^{n-1} \right| \leq C \Lambda_1\sqrt{\eps_k}  \int_{M_k} |\langle X, \nu_{M_k} \rangle|\, d \Ha^{n-1}.
\]
By a density argument (cf. \cite[Corollary 3.4]{AFM}), the previous estimate implies
\[
\left| \int_{M_k} (H_{M_k}  + 4 \gamma v_{F_k}) \varphi  \, d \Ha^{n-1} \right| \leq C \Lambda_1\sqrt{\eps_k}  \int_{M_k} |\varphi|\, d \Ha^{n-1}
\]
for all $\varphi \in C_0^{\infty}(M_k)$ with $\int_{M_k} \varphi \, d \Ha^{n-1} = 0$.  By Riesz representation formula we then  obtain  \eqref{euler_pen2}.

 We use the equation \eqref{euler_pen2} and argue exactly as in Step 1 to conclude that  $\|F_k, E\|_{W^{2,p}} \to 0$  for any $p>n$. Hence we may use Proposition \ref{smooth} to conclude
\[
J(F_k) \geq J(E) + c_1|F_k \triangle E|^2
\] 
for large $k$. However, the minimality of $F_k$, the contradiction assumption, and \eqref{quotent_est}  yield
\[
J(F_k) \leq J(E_k) \leq  J(E) + \frac{c_1}{4}\eps_k^2 \leq J(E) + \frac{c_1}{2} |F_k \triangle E|^2
\]
for large $k$, which is a contradiction.
\end{proof}

\begin{proof}[Proof of Corollary \ref{corollary1}]
We obtain Corollary \ref{corollary1}  immediately from Theorem \ref{mainthm} arguing as in   \cite[Theorem 6.3]{AFM}.
\end{proof}

\section*{Acknowledgement}
The first author was partially funded by the 2008 ERC Grant no.226234 ''Analytic Techniques for Geometric and Functional Inequalities'' and the second author was partially funded by the Marie Curie project IRSES-2009-247486 of the Seventh Framework
Programme.

\end{document}